\newsavebox\pandoc@box
\newcommand*\pandocbounded[1]{%
  \sbox\pandoc@box{#1}%
  \Gscale@div\@tempa{\textheight}{\dimexpr\ht\pandoc@box+\dp\pandoc@box\relax}%
  \Gscale@div\@tempb{\linewidth}{\wd\pandoc@box}%
  \ifdim\@tempb\p@<\@tempa\p@\let\@tempa\@tempb\fi%
  \ifdim\@tempa\p@<\p@\scalebox{\@tempa}{\usebox\pandoc@box}%
  \else\usebox{\pandoc@box}%
  \fi%
}
\def\fps@figure{htbp}
\title{Multiscale methods for discretized continuous optimization: convergence and cost analysis\thanks{This work was funded by the Natural Sciences and Engineering Research Council of Canada. Manuscript Date: July 15, 2026.}}
\author{Nicholas J. E. Richardson\thanks{Department of Mathematics, University of British Columbia, Vancouver, BC, Canada} \and Noah Marusenko\thanks{Department of Computer Science, University of British Columbia, Vancouver, BC, Canada} \and \mbox{Michael P. Friedlander\footnotemark[2] \footnotemark[3]}}
\newcommand{\half}{\tfrac{1}{2}}
\newcommand{\coarse}[1]{\overline{#1}}
\newcommand{\interp}[1]{\underline{#1}}
\newcommand{\lipschitz}[1]{L_{#1}}
\newcommand{\smoothness}[1]{\mathcal{S}_{#1}}
\newcommand{\convexity}[1]{\mu_{#1}}
\newcommand{\norm}[1]{\left\lVert #1 \right\rVert}
\newtheorem{remark}[theorem]{Remark}
\newcommand\scalemath[2]{\scalebox{#1}{\mbox{\ensuremath{\displaystyle #2}}}}
\crefname{theorem}{Theorem}{Theorems}
\crefname{definition}{Definition}{Definitions}
\crefname{lemma}{Lemma}{Lemmas}
\crefname{corollary}{Corollary}{Corollaries}
\crefname{proposition}{Proposition}{Propositions}
\crefname{remark}{Remark}{Remarks}
\crefname{section}{Section}{Sections}
\Crefname{section}{Section}{Sections}
\crefname{subsection}{Section}{Sections}
\Crefname{subsection}{Section}{Sections}
\crefname{figure}{Figure}{Figures}
\Crefname{figure}{Figure}{Figures}
\crefname{table}{Table}{Tables}
\Crefname{table}{Table}{Tables}
\crefname{algorithm}{Algorithm}{Algorithms}
\Crefname{algorithm}{Algorithm}{Algorithms}
\shorttitle{MULTISCALE METHODS FOR DISCRETIZED OPTIMIZATION} 
\begin{document}

\maketitle

\begin{abstract}
Discretized versions of optimization problems over continuous arguments are routinely solved
at a single fine resolution, incurring a per-iteration cost that grows, often
superlinearly, with the number of grid points. This paper analyzes a
multiscale method that instead solves a hierarchy of increasingly fine dyadic
discretizations. Linear interpolation of
each coarse solution warm starts the next finer scale using any $q$-linearly
convergent update rule as the inner solver. Each coarse problem is a
consistent discretization of the continuous problem. Structural properties such
as convexity and smoothness are preserved. For problems with Lipschitz-continuous
solutions, two variants of the method converge to the fine-scale solution with explicit error bounds.
The fine-scale solution in turn approximates the continuous solution once the
grid is sufficiently fine, with quantified constants. The total cost to reach a
fixed accuracy is provably lower than that of single-scale optimization
whenever the cost of one update grows at least linearly in the problem size.
Numerical experiments on probability density demixing problems, including
geological survey data, show four- to sevenfold speedups while using a fraction
of the memory.
\end{abstract}

\begin{keywords}
multiresolution analysis, continuous optimization, multigrid methods, approximation theory
\end{keywords}

\begin{AMS}
65B99, 65D15, 65K10, 90C59
\end{AMS}

\section{Introduction}\label{introduction}

Many finite-dimensional optimization problems are discretizations of continuous ones. In density estimation, signal reconstruction, and inverse problems, the vector of unknowns $x\in\mathbb{R}^I$ represents samples of an implicit continuous function $f:\mathcal{D}\to\mathbb{R}$. The finite-dimensional problem
\begin{equation}\phantomsection\label{eq-discretized-problem}{
  \min_{x}\set{ \tilde{\mathcal{L}} (x) | x \in \tilde{\mathcal{C}}}
}
\end{equation}
stands in for an underlying continuous problem
\begin{equation}\phantomsection\label{eq-abstract-continuous-problem}
  \min_{f}\set{\mathcal{L} (f) | f\in\mathcal{C}}, \tag{P}
\end{equation}
posed over a class $\mathcal{C}$ of continuous functions on a one-dimensional domain $\mathcal{D}\subseteq\mathbb{R}$. The discretized objective $\tilde{\mathcal{L}}$ and constraint set $\tilde{\mathcal C}$ correspond to their continuous counterparts, with $x[i]=f(t[i])$ on a grid $\{t[i]\}_{i\in I}\subset\mathcal{D}$. Throughout, \emph{discretization} means a finite-dimensional reduction of the problem—including the variable, objective, and constraints—obtained by sampling $f$ on a grid, rather than the approximation of a differential operator.

Fine grids favor accuracy, but carry computational costs: most operations slow and consume more memory as the grid size $I$ grows, often superlinearly, such as with matrix-matrix products~\cite{strassen_gaussian_1969}. Standard practice incurs this cost at every iteration by solving~\eqref{eq-discretized-problem} at the finest scale, from the first iterate to the last. That approach, however, discards the continuity of the underlying solution. Samples of a continuous function at neighbouring grid points are themselves close, so a solution computed on a course grid carries useful information needed at a finer grid.

This paper analyzes a multiscale method that exploits the continuity underlying the discretized problem. The method solves a hierarchy of dyadic discretizations of~\cref{eq-abstract-continuous-problem}, coarsest first, linearly interpolating each coarse solution to warm start the next finer scale. The method runs any iterative update rule with at least $q$-linear iterate convergence as the inner solve at each scale. We prove that this schedule reaches the fine-scale solution, and thus a bounded approximation to the continuous problem, at provably lower total cost than solving at the finest scale alone. \Cref{fig-multi-vs-single-time} previews the effect on a density recovery problem: past a modest problem size, the multiscale schedule dominates, and its efficiency improves as the problem size grows.

\begin{figure}[t]
\centering
\includegraphics[width=0.55\linewidth]
{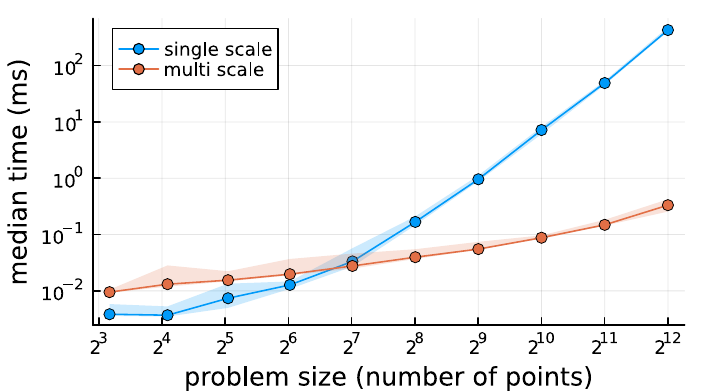}
\caption{Comparison of multiscale vs single-scale approach for the motivating example from \cref{sec-motivating-example}. Dots represent the median total time in milliseconds over \(100\) trials; shaded regions represent the \(5\)th and \(95\)th percentile times. The multiscale approach improves algorithm speed over a single scaled approach once the problem size is large enough. Details are provided in \cref{sec-motivating-example-numerics}.}\label{fig-multi-vs-single-time}
\end{figure}

\subsection{Contributions}

We prove this advantage holds under a simple discretization scheme, and any inner algorithm satisfying mild conditions, adding rigour to the \emph{ad hoc} approach often used in practice.

\begin{itemize}

\item \emph{Consistent construction (\cref{the-multiscale-optimization-method}}). Dyadic coarsening, midpoint linear interpolation, and explicit rescaling rules for linear and norm constraints ensure that at every scale $s$, the finite dimensional approximation \cref{eq-scale-s-discretized-problem} discretizes the same continuous problem \cref{eq-abstract-continuous-problem}. Each discretization preserves convexity of constraints and smoothness or strong convexity of objectives, each with correctly scaled constants.

\item \emph{Convergence to fine-scale solution} (\cref{sec-multiscale-convergence}). Two variants are analyzed: a \emph{greedy} variant, which re-optimizes all variables at each scale, and a \emph{lazy} variant, which freezes coarse values and optimizes only newly interpolated points. For both, the final error decomposes as a reduced initial error plus an accumulated interpolation error; Theorems~\ref{thm-greedy-multiscale-descent-error} and \ref{thm-lazy-multiscale-descent-error} make each term explicit and apply to any inner solver with $q$-linear iterate convergence (\cref{def-iterate-convergence}), including projected gradient descent.

\item \emph{Fine-scale approximation of the continuous problem} (\cref{sec-relation-between-the-discretized-and-continuous-problems}). A small error in the discretized solution implies a small error between the continuous solution and its piecewise linear reconstruction, once the grid size exceeds an explicit threshold (\cref{thm-problem-connection}).

\item \emph{Multiscale schedule efficiency} (\cref{comparison-with-projected-gradient-descent}). When one update at problem size $I$ costs $\Theta(I^p)$ with $p\ge1$, explicit iteration schedules make greedy and lazy multiscale simultaneously cheaper and tighter in expected error than single-scale projected gradient-descent, provided the problem is large enough (\cref{cor-greedy-cost,cor-lazy-cost}).

\item \emph{Predicted speedup in practice} (\cref{sec-numerical-experiments}). On synthetic and real geological density demixing problems, which are tensor-valued problems beyond the 1-dimensional theory, the greedy multiscale method runs four to seven times faster than its single-scale counterpart, with a fraction of the memory.

\end{itemize}

\begin{remark}[Scope]\label[remark]{rem-scope}
We state results for scalar functions on $\mathcal{D}=[0,1]$. This is without loss of generality: an affine change of variables maps any compact interval $[\ell, u]$ to $[0,1]$ and replaces the Lipschitz constant $L_f$ by the effective constant $(u-\ell)L_f$, which is the quantity that contributes to every bound. Tensor-valued functions are handled by vectorization, as described in \cref{sec-numerical-experiments}. The restriction to Lipschitz-continuous solutions is deliberate: the convergence analysis needs only a bound on how much $f$ can vary between grid points (a modulus of continuity) and analogues of our results hold for uniformly continuous functions. The Lipschitz assumption yields explicit constants that can be weighed against computational cost, and the applications we target have Lipschitz solutions. The problem~\eqref{eq-abstract-continuous-problem} could be posed over a Banach space of functions, but the questions we address regarding iteration complexity, cost, and grid sizes are concrete and so we choose a matching concrete setting.
\end{remark}

\subsection{Related work}\label{related-work}

Our method is an instance of \emph{nested iteration}, the coarse-to-fine
strategy underlying the full multigrid method~\cite{trottenberg_multigrid_2001}
and the cascadic multigrid method of Bornemann and
Deuflhard~\cite{bornemann1996cascadic}: solve on a coarse grid,
interpolate, warm start the next finer grid, and never return to coarser
grids. Cascadic multigrid applies this template to linear elliptic
equations with a fixed linear smoother. Here, we apply it to constrained
optimization with a generic $q$-linearly convergent update, and we ask a
different question: for a fixed accuracy at the finest scale, when is the
total cost provably lower than solving at that scale alone?

A second line of work adapts the recursive V-cycle structure of multigrid
to optimization: the MG/OPT framework of Nash~\cite{nash_multigrid_2000},
recursive multilevel trust-region methods of Gratton, Sartenaer, and
Toint~\cite{gratton_recursive_2008}, multigrid methods for PDE-constrained
optimization surveyed by Borz{\`i} and Schulz~\cite{borzi_multigrid_2009},
and the proximal variant MGProx~\cite{ang_mgprox_2024}. These methods start
at the finest scale and recursively invoke coarse corrections to accelerate
fine-scale steps. Our method differs structurally: it passes through the
scales once, coarse to fine, and treats the inner solver as a black
box---and differs in what is proved: rather than per-iteration descent of a
fine-scale merit function, we bound the \emph{total cost to a fixed
accuracy} and give sufficient conditions under which the multiscale
schedule dominates the single-scale one. The algorithmic template is standard. Our
contribution is the analysis, which rests on tight interpolation-error
bounds for Lipschitz functions
(\cref{lem-lipschitz-interpolation,lem-exact-interpolation,lem-inexact-interpolation})
combined with the contraction of the inner solver. At each scale~$s$ the
method solves a distinct problem $(\mathrm{P}_s)$ that re-discretizes both
the domain and the objective, so each subproblem approximates the
continuous problem rather than a subsampled version of the fine-scale
problem; multiresolution matrix
factorizations~\cite{gillis_multilevel_2012,kondor_multiresolution_2014}
share this re-discretization principle.

Third, a large literature treats the continuous problem
\cref{eq-abstract-continuous-problem} directly, before discretization:
the calculus of variations~\cite{gelfand_calculus_2000}, optimization in
function spaces and PDE-constrained
optimization~\cite{hinze_optimization_2009}, basis and spectral
expansions~\cite{benedetto_wavelets_1993,trefethen_approximation_2019},
parametric families and nonparametric
estimators~\cite{chen_tutorial_2017,yan_learning_2023}, and
convex duality reformulations~\cite{chuna_dual_2025}. In the standard
dichotomy, these are optimize-then-discretize approaches; we deliberately
take the discretize-then-optimize stance, because the problems we target
arrive as fine-grid discretizations and the practical
question is how to solve those discretizations cheaply. Uniformly spaced
grids suffice for Lipschitz functions in both theory and practice. More
sophisticated schemes that adapt the grid to the local variation of
$f$~\cite{stetter_analysis_1973,trefethen_approximation_2019}
are compatible with many of the results below.

Finally, two familiar mechanisms operate inside the method. Each scale
warm starts the next, which is a standard device in numerical
optimization~\cite{adcock_restarts_2025}. The lazy
variant updates only a subset of coordinates at each scale, which connects it
to greedy and block coordinate descent
methods~\cite{dhillon_nearest_2011,nesterov_efficiency_2012,xu_BlockCoordinateDescent_2013}.

\subsection{Reproducible research} The data files and scripts used to generate the numerical results presented in this paper are available from the GitHub repository~\cite{Richardson_multiscale_paper_code}.

\section{The multiscale optimization
method}\label{the-multiscale-optimization-method}

This section constructs the method and establishes that, at every scale, the method solves a consistent finite-dimensional discretization of the same continuous problem~\eqref{eq-abstract-continuous-problem} (same structure with rescaled data) so anything known about the fine-scale problem transfers to the coarser scales. We introduce the construction through a concrete example, then define he transfer operators between scales, the constraint rescaling rules, and the greedy and lazy algorithm variants.

\subsection{Motivating example}\label{sec-motivating-example}

Consider recovering a smooth probability density function from polynomial measurements. We formulate this as an inverse least-squares problem over differentiable densities $f$ on $\mathcal{D}=[-1,1]$:
\[
  \min_{f} \Set{ \half\left\lVert \mathcal{A}(f) - y \right\rVert_2^2 + \half\lambda \left\lVert f'\right\rVert_2^2 | \left\lVert f \right\rVert_1 = 1
  \text{ and } f \geq 0},
\]
where the constraints ensure $f$ is a valid probability density.
The measurement operator~\(\mathcal{A}\) maps $f$ to an $M$-vector with elements
\begin{equation*}
  \mathcal{A}(f)[m] = \left\langle a_m, f \right\rangle \quad \text{for}\quad m=1,\dots,M,
\end{equation*}
with normalized Legendre polynomials
\(
  a_m(t) = (\tfrac{2m+1}{2})^{1/2} \sum_{k=0}^m \binom{m}{k}\binom{m+k}{k} \left(\frac{t-1}{2}\right)^k.
\)
The regularization term
$\left\lVert f'\right\rVert_2^2 = \int_{-1}^1 (f'(t))^2\, dt$
encourages smoothness in the recovered density.
We discretize this continuous problem on a uniform grid to obtain
\begin{equation}\label{eq-motivating-example-finest}
\min_{x_1}\Set{ \half\left\lVert A_1 x_1 - y \right\rVert_2^2 + \half\lambda x_1^\top G_1 x_1 | \lVert x_1 \rVert _{1} =1 \text{ and } x_1 \geq 0},
\end{equation}
where the vector \(x_1=(x_1{[1]},x_1{[2]},\dots,x_1{[I_1]})\) represents the uniform discretization of \(f\) on \([-1,1]\) at the finest scale with grid spacing $\Delta t_1 = 2/(I_1-1)$:
\[
x_1{[i]}=f(t_1{[i]})\Delta t_1, \qquad \text{with} \qquad t_1[i] = -1 + 2\tfrac{i - 1}{I_1-1}.
\]
The discretized measurement operator $A_1$ and graph Laplacian $G_1$, which approximates the second derivative operator, are
\[
 A_1[m, i] = a_m(t_1[i])
 \quad \text{and} \quad
 G_1 = \frac{1}{\Delta t_1^3}
 \scalemath{0.75}{
 \begin{bmatrix}
  1 & -1 & & & \\
  -1 & 2 & -1 & & \\
  & \ddots & \ddots & \ddots &\\
  & & -1 & 2 & -1\\
  & & &  -1 & 1
\end{bmatrix}}.
\]
We return to this example in \cref{sec-method-overview}, after introducing
the general multiscale framework, and report its numerical behavior in
\cref{sec-motivating-example-numerics}.

\subsection{Method overview}\label{sec-method-overview}

We solve the continuous optimization problem
\cref{eq-abstract-continuous-problem} via
successive grid refinement. Starting from a coarse discretization, we solve the discretized problem at each scale, interpolate the solution to the next finer grid, and use it to warm-start the optimization. \Cref{alg-multiscale} formalizes this refinement process.

We index discretization levels by a \emph{scale} parameter $s$, where $s=1$ denotes the finest discretization and increasing $s$ corresponds to progressively coarser grids. At scale $s$, the discretized problem becomes
\begin{equation} \label{eq-scale-s-discretized-problem}
  \min_{x_s}\Set{\tilde{\mathcal{L}}_s(x_s) | x_s \in \tilde{\mathcal{C}}_s}, \tag{\text{$\mathrm{P_s}$}}
\end{equation}
where $x_s\in\mathbb{R}^{I_s}$, and $\tilde{\mathcal{L}}_s$ and
$\tilde{\mathcal{C}}_s$ denote the scale-$s$ discretizations of the
objective and constraint set. The set $\tilde{\mathcal{C}}_s\subseteq
\mathbb{R}^{I_s}$ imposes the continuous constraint on the $I_s$ grid
values and is typically a continuum, such as the rescaled simplex in the example
below, rather than a discrete set. \emph{Discretized} refers to the
finite number of variables, not to the structure of the set.
\Cref{sec-constraints-with-multiscale} gives the constructions.

We return to the motivating example from \cref{sec-motivating-example} to demonstrate how the general framework applies. Recall that we recover a probability density from Legendre polynomial measurements, discretized at the finest scale as \cref{eq-motivating-example-finest}. At scale \(s\), the discretized problem becomes
\begin{equation}\label{eq-motivating-example-scale-s}
\min_{x_s}\Set{ \half\left\lVert A_s x_s - y \right\rVert_2^2 + \half\lambda x_s^\top G_s x_s | \lVert x_s \rVert _{1} = 2^{1-s} \text{ and } x_s \geq 0}
\end{equation}
where \(x_s\) is an \(I_s = 2^{s-1} + 1\) vector that discretizes $f$ at scale $s$ with grid spacing $\Delta t_s = 2^{s-1}\Delta t_1$. The constraint $\lVert x_s \rVert_1 = 2^{s-1}$ scales with the grid spacing to preserve the continuous constraint $\int f(t)\, dt = 1$ across all scales; see \cref{sec-constraints-with-multiscale}. The scaled measurement operator $A_s$ and graph Laplacian $G_s$ are defined as
\[
A_s = 2^{s-1}A_1[:, 1:2^{s-1}:I_1]
\quad \text{and} \quad
G_s = 2^{1-s}G_1[1:2^{s-1}:I_1, 1:2^{s-1}:I_1],
\]
where the slicing notation $1:2^{s-1}:I_1$ selects every $(2^{s-1})$-th point from the finest grid. The prefactors $2^{s-1}$ and $2^{1-s}$ ensure the two main quantities in the loss remain balanced at each scale:
\[
A_s x_s \approx A_1 x_1 \quad \text{and}\quad x_s^\top G_s x_s \approx x_1^\top G_1 x_1.
\]
This balancing ensures each scaled problem is regularized similarly by \(\lambda\).

\subsection{Discretization, coarsening, and interpolation}

Unless otherwise stated, we consider scalar functions \(f:[0,1]\to\mathbb{R}\) sampled on the uniform grid
\[
x_s[i] = f(t_s[i]), \qquad \text{with} \qquad t_s[i] = \tfrac{i - 1}{I_s-1};
\]
as discussed in \cref{rem-scope}, results for a general compact interval \([\ell,u]\) follow by an affine change of variables, with \(L_f\) replaced by the effective constant \((u-\ell)L_f\).

The multiscale framework requires operators to transfer solutions of \cref{eq-scale-s-discretized-problem} between scales and to eventually construct approximate solutions to the continuous problem \cref{eq-abstract-continuous-problem}.
Any coarsening and
interpolation method could be used, including splines~\cite{de_boor_practical_2001} and Chebyshev and Lagrange polynomials~\cite{trefethen_approximation_2019}. We focus on dyadic coarsening, which keeps every second point, and midpoint linear interpolation. These are defined below.
\Cref{fig-discretization-example} illustrates this discretization hierarchy and midpoint interpolation scheme.

\begin{definition}[Dyadic Coarsening]
  \protect\hypertarget{def-subsampled-coarsening}{}\label[definition]{def-subsampled-coarsening}
  Dyadic coarsening maps a vector \(x\in\mathbb{R}^I\)
  to \(\coarse{x}\in\mathbb{R}^{\lfloor (I+1)/2 \rfloor}\)
  by selecting odd-indexed entries:
  \[
  \coarse{x}[i] = x[2i-1],
  \qquad i=1,\ldots,\lfloor (I+1)/2 \rfloor.
  \]
\end{definition}

\begin{definition}[Midpoint Linear
Interpolation]\protect\hypertarget{def-midpoint-linear-interpolation}{}\label[definition]{def-midpoint-linear-interpolation}
Midpoint linear interpolation inserts a linearly interpolated point between each adjacent pair of entries in \(x\in\mathbb{R}^I\) to produce \(\interp{x}\in\mathbb{R}^{2I-1}\):
\[
\interp{x}[i]=\begin{cases}
x[\tfrac{i+1}{2}]   & \text{if $i$ is odd,}\\
\tfrac{1}{2}\left( x[\tfrac{i}{2}] + x[\tfrac{i}{2}+1] \right) & \text{if $i$ is even.}
\end{cases}
\]
\end{definition}

The operators in the coarsening and interpolation definitions translate between scales \(s\) with the notation
\[
\coarse{x}_s = x_{s+1} \qquad \text{and} \qquad \interp{x}_s = x_{s-1}.
\]
The overline operator $\coarse{(\cdot)}$ maps \(x_s\) to the coarser scale $s+1$; the underline operator $\interp{(\cdot)}$ maps to the finer scale $s-1$; see \cref{fig-discretization-example}.

We refer to the vector of newly added interpolated points as the \emph{free variables} $x_{(s)}$, a term justified in \cref{sec-two-multiscale-algorithms}.

\begin{definition}[Vector of Free
Variables]\protect\hypertarget{def-free-variables}{}\label[definition]{def-free-variables}
Given the $(I_{s+1})$-vector $x_{s+1}$ and its interpolation $\interp{x}_{s+1}=x_s$, the $(I_{s+1}-1)$-vector of \emph{free variables} has entries
\[
\textstyle
x_{(s)}[i] = \frac{1}{2}( x_{s+1}[i] + x_{s+1}[i+1]), \quad i=1,\dots,I_{s+1}-1.
\]
These are the newly added points in $x_s$ when interpolating $x_{s+1}$.
At the coarsest scale $s=S$, we set $x_{(S)}=x_{S}$.
\end{definition}

\begin{figure}[t]

\centering

\includegraphics[width=0.75\linewidth]
{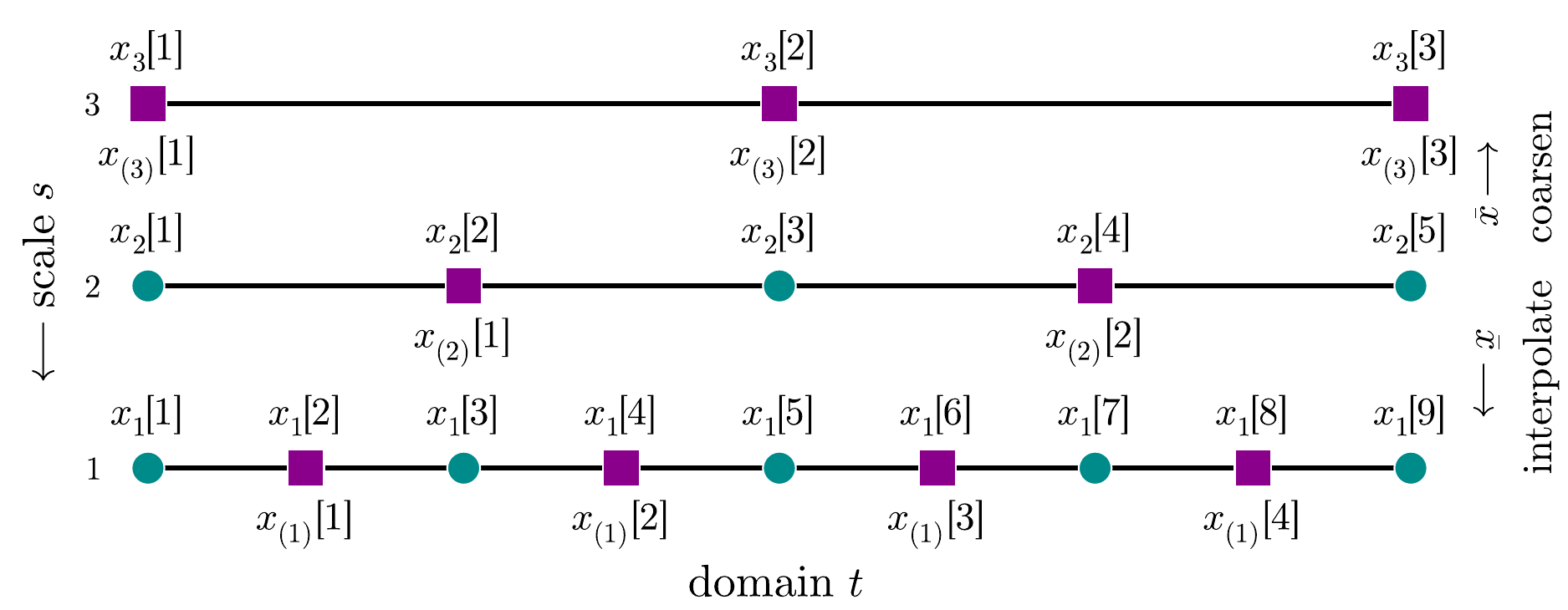}

\caption{Discretization of an interval with \(S=3\) scales. Each scale \(s\) has
\(2^{S-s+1}+1\) points. Starting at scale \(s=3\), newly added points at each scale (the free variables \(x_{(s)}[i]\)) are shown in purple squares, while existing points are shown in teal circles.
}\label{fig-discretization-example}

\end{figure}

To convert a discrete solution \(x_1\) of
\cref{eq-scale-s-discretized-problem} at the finest scale $s=1$ to an
approximate solution to the continuous problem
\cref{eq-abstract-continuous-problem}, we define a
piecewise linear approximation. This approximation
stitches secants through
(possibly approximate) sample points~\cite{de_boor_practical_2001}.%

\begin{definition}[Piecewise Linear
Approximation]\protect\hypertarget{def-piecewise-linear-approximation}{}\label[definition]{def-piecewise-linear-approximation}
Let the $I$-vector \(x\) represent (possibly approximate)
samples of a function \(f\) at locations \(t_1 < \dots < t_I\). The
piecewise linear approximation constructed from \(x\) is
\[
\hat{f}_x(t) =
\tfrac{{x[i+1] - x[i]}}{t_{i+1} - t_{i}}( t - t_i)+ x[i] \quad \text{where}\quad t_{i}\leq t\leq t_{i+1}\quad \text{for} \quad i = 1,\dots,I.
\]
\end{definition}

\begin{remark}
The subscript in $\hat{f}_x$ identifies the vector $x$ that defines the approximation.
When $x$ contains samples from a function, that is, $x[i] = f(t_i)$, we omit the subscript and write \(\hat{f}(t)\).
\end{remark}

\subsection{Discretizing constraints across scales}\label{sec-constraints-with-multiscale}

The multiscale approach requires discretizing
\cref{eq-abstract-continuous-problem} at multiple scales. Given a
discretization of the continuous constraint \(\mathcal{C}\) from~\eqref{eq-abstract-continuous-problem} to the finest scale \(\tilde{\mathcal{C}}_1\) in~\eqref{eq-scale-s-discretized-problem}, we must determine the appropriate
discretized constraint sequence \(\tilde{\mathcal{C}}_s\) at coarser scales $s=S-1, S-2, \dots, 1$.

Pointwise constraints \(\mathcal{C} = \left\{f:\mathcal{D}\to\mathbb{R} \mid f(t)\in\mathcal{X}\right\}\)
transfer verbatim to any scale:
\(\tilde{\mathcal{C}}_s=\left\{x_s\in\mathbb{R}^{I_s} \mid x[i]\in\mathcal{X}\right\}.\)
However, linear and moment constraints require careful scaling to maintain consistency across discretization levels.

Consider a \(p\)-norm constraint on \(f\):
\[
\mathcal{C}=\Set{f:\mathcal{D}\to\mathbb{R} | \left\lVert f\right\rVert _p = c}.
\]
The natural discretization
\(
\tilde{\mathcal{C}}_1=\{x_1\in\mathbb{R}^{I_1} | \left\lVert x_1\right\rVert _p = c\}
\)
correctly approximates $\mathcal{C}$ when we scale the entries of \(x_1\) by the grid size:
\(
x_1[i] = f(t_1[i]) (\Delta t)^{1/p}.
\)
With this scaling, the discrete norm approximates the continuous norm:
\[
c^p = \left\lVert x_1\right\rVert _p^p = \textstyle\sum_{i=1}^{I_1} f(t_1[i])^p \Delta t \approx \int_{\mathcal{D}} f(t)^p\, dt = \left\lVert f\right\rVert _p^p,
\]
as \(I_1\to \infty\).
At coarse scale $s$, we modify the constraint to
\[
\tilde{\mathcal{C}}_s=\Set{x_s\in\mathbb{R}^{I_s} | \left\lVert x_s\right\rVert _p
= c \cdot ({I_s}/{I_1})^{1/p} }.
\]
This scaling preserves the constraint across scales. The Lipschitz property of \(f\)
ensures neighboring entries in \(x_s\) remain similar, so
\[
   \textstyle c^{p} = \frac{I_1}{I_s}\left\lVert x_s\right\rVert_p^p =\frac{I_1}{I_s}\sum_{i=1}^{I_s} (x_s[i])^p  \approx \sum_{i=1}^{I_1} (x_1[i])^p = \left\lVert
  x_1\right\rVert_p^p.
\]

Linear constraints require analogous scaling. For \(\lipschitz{g_k}\)-Lipschitz functions \(g_k:\mathcal{D}\to\mathbb{R}\), the continuous constraint
\[
  \mathcal{C}
  =
  \Set{f:\mathcal{D}\to\mathbb{R} |\left\langle g_k, f\right\rangle = b[k],\, k\in[K]} \quad \text{where} \quad b\in\mathbb{R}^K,
\]
discretizes at scale \(s\) as
\[
  \tilde{\mathcal{C}}_s=\Set{x_s\in\mathbb{R}^{I_s} | A_s x_s = b\cdot({I_s}/{I_1}) },
\]
where
\(
  A_s[k,i]=g_k(t_s[i])(\Delta t)^{1/2} \ \text{and} \ x_s[i] = f(t_s[i])
  (\Delta t)^{1/2}.
\)

We bound the discretization error to justify our constraint modification across scales.
To do this, we extend of the definition of Lipschitz
continuity to vectors.
\begin{definition}[Lipschitz
Vector]\protect\hypertarget{def-lipschitz-vector}{}\label[definition]{def-lipschitz-vector}
A vector \(x \in \mathbb{R}^I\) is \(\lipschitz{x}\)-Lipschitz when,
\[
\left\lvert x[i] - x[j] \right\rvert \leq \lipschitz{x} \left\lvert i - j \right\rvert\qquad \text{for all} \qquad i,j\in[I].
\]

\end{definition}
We also get a correspondence between Lipschitz functions and vectors.
\begin{corollary}[Lipschitz Vectors and
Functions]\protect\hypertarget{cor-lipschitz-vectors-and-functions}{}\label[corollary]{cor-lipschitz-vectors-and-functions}
Given \(I\) samples \(x[i]=f(t[i])\) of an \(\lipschitz{f}\)-Lipschitz
function \(f:\mathbb{R}\to \mathbb{R}\) on a uniformly spaced grid
\(\{t[i]\}\), the vector \(x \in \mathbb{R}^I\) is
\(\lipschitz{x}=\lipschitz{f}\Delta t\) Lipschitz.

\end{corollary}

\begin{proof}
Let \(i,j\in[I]\). Our grid has points
\(t[i] = (i-1)\Delta t\), so we have
\begin{align*}
\left\lvert x[i]-x[j] \right\rvert &=\left\lvert f(t[i])-f(t[j]) \right\rvert \leq \lipschitz{f} \left\lvert t[i] - t[j] \right\rvert = \lipschitz{f} \Delta t \left\lvert i-j \right\rvert.
\endproofhere
\end{align*}
\end{proof}

The following proposition quantifies how linear constraints transform under coarsening.
\begin{proposition}[Single Linear Constraint
Scaling]\protect\hypertarget{prp-linear-constraint-scaling}{}\label[proposition]{prp-linear-constraint-scaling}
Let \(a\in\mathbb{R}^I\) discretize a bounded
\(\lipschitz{g}\)-Lipschitz function \(g:[0, 1]\to\mathbb{R}\), and let $x$ discretize a bounded $\lipschitz{f}$-Lipschitz function $f$. Define the Lipschitz constant for the product function $fg$ as
\(\lipschitz{fg}=\lipschitz{f} \left\lVert g \right\rVert_{\infty} + \lipschitz{g} \left\lVert f \right\rVert_{\infty}\).
If \(\langle a, x \rangle = b\), then
\begin{alignat*}{2}
  \left\lvert \left\langle \coarse{a}, \coarse{x} \right\rangle - \half b  \right\rvert
  &\leq \tfrac14\lipschitz{fg}  \tfrac{I}{I-1}
  &&\quad\text{for even $I$},
  \\
  \left\lvert \left\langle \coarse{a}, \coarse{x} \right\rangle - \half\tfrac{I+1}{I}b \right\rvert
  &\leq \half\lipschitz{fg}
  &&\quad\text{for odd $I$}.
\end{alignat*}
\end{proposition}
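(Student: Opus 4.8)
The plan is to set $h := fg$ and reduce the statement to a one-dimensional fact about sums of values of a Lipschitz function on a uniform grid, then treat the even and odd cases separately by pairing consecutive grid points. For the setup, let $\Delta t = (u-\ell)/(I-1)$ be the grid spacing, so $|t_{j+1}-t_j| = \Delta t$ and $|t_1-t_I| = u-\ell$. Since $x[i]=f(t_i)$ and $a[i]=g(t_i)$, we have $\langle a,x\rangle = \sum_{i=1}^I h(t_i) = b$, and by \cref{def-subsampled-coarsening} the coarsened inner product $\langle\coarse a,\coarse x\rangle = \sum_{i=1}^{\lfloor (I+1)/2\rfloor} h(t_{2i-1})$ is the sum of $h$ over the odd-indexed grid points. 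A one-line triangle inequality gives the product-Lipschitz bound, $|h(s)-h(t)| \le |f(s)||g(s)-g(t)| + |g(t)||f(s)-f(t)| \le (\lipschitz f\norm{g}_\infty + \lipschitz g\norm{f}_\infty)|s-t| = \lipschitz{fg}|s-t|$, with $\norm f_\infty,\norm g_\infty$ finite since $f,g$ are bounded.

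For \emph{even} $I$ there are $I/2$ odd indices, and pairing $t_{2k-1}$ with $t_{2k}$ gives $b = \sum_{k=1}^{I/2}(h(t_{2k-1})+h(t_{2k}))$, hence
\[
  \langle\coarse a,\coarse x\rangle - \half b = \tfrac12\sum_{k=1}^{I/2}\bigl(h(t_{2k-1}) - h(t_{2k})\bigr).
\]
Each summand has modulus at most $\lipschitz{fg}\Delta t$, so the left side is at most $\tfrac12\cdot\tfrac I2\cdot\lipschitz{fg}\Delta t = \tfrac14\lipschitz{fg}(u-\ell)\tfrac{I}{I-1}$, which is the first claimed bound.

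For \emph{odd} $I$ the same pairing covers indices $1,\dots,I-1$ (that is $(I-1)/2$ pairs) and leaves the odd-indexed point $t_I$, so
\[
  \langle\coarse a,\coarse x\rangle - \half b = \tfrac12\sum_{k=1}^{(I-1)/2}\bigl(h(t_{2k-1}) - h(t_{2k})\bigr) + \tfrac12 h(t_I).
\]
The sum is bounded by $\tfrac14\lipschitz{fg}(u-\ell)$ exactly as before. The leftover $\tfrac12 h(t_I)$ is not small on its own, which is precisely why the statement recenters to $\half\frac{I+1}{I}b$: writing $\bar h := b/I$ for the grid average of $h$, one has $\half\frac{I+1}{I}b - \half b = \tfrac1{2I}b = \tfrac12\bar h$, so
\[
  \langle\coarse a,\coarse x\rangle - \half\frac{I+1}{I}b = \tfrac12\sum_{k=1}^{(I-1)/2}\bigl(h(t_{2k-1}) - h(t_{2k})\bigr) + \tfrac12\bigl(h(t_I) - \bar h\bigr).
\]
Averaging the Lipschitz estimate over the grid, $|h(t_I)-\bar h| = \bigl|\tfrac1I\sum_{i=1}^I(h(t_I)-h(t_i))\bigr| \le \tfrac{\lipschitz{fg}}{I}\sum_{i=1}^I (I-i)\Delta t = \tfrac{I-1}{2}\lipschitz{fg}\Delta t = \tfrac12\lipschitz{fg}(u-\ell)$, so the second term contributes at most $\tfrac14\lipschitz{fg}(u-\ell)$, and adding the two pieces gives the claimed $\half\lipschitz{fg}(u-\ell)$.

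The main obstacle is the odd case. The naive pairing leaves a stray $\tfrac12 h(t_I)$ of order $\norm h_\infty$, and clearing it requires two observations that are easy to miss: recentering the comparison point to $\half\frac{I+1}{I}b$ (which cancels the bulk of $h(t_I)$ against $\tfrac12\bar h$), and then bounding the residual $\tfrac12(h(t_I)-\bar h)$ by \emph{averaging} the Lipschitz bound over all grid points, which yields the sharp factor $\tfrac12\lipschitz{fg}(u-\ell)$ rather than the $\lipschitz{fg}(u-\ell)$ that a single worst-case difference would give. The even case and the product-Lipschitz identity are routine.
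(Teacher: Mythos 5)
Your proof is correct and follows essentially the same route as the paper's: bound adjacent-entry differences of the product $h=fg$ by $\lipschitz{fg}\,\Delta t$, pair consecutive grid points to compare the odd-indexed sum against $\half b$, and in the odd-$I$ case absorb the leftover endpoint via the recentered target $\half\tfrac{I+1}{I}b$ together with an averaged Lipschitz estimate. All the constants check out, so nothing further is needed.
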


\begin{proof}
See \cref{sec-linear-constraint-scaling-proof}.
\end{proof}

\Cref{prp-linear-constraint-scaling} extends naturally
to multiple linear constraints, as described in the following corollary.

\begin{corollary}[Matrix Linear Constraint
Scaling]\protect\hypertarget{cor-matrix-linear-constraint}{}\label[corollary]{cor-matrix-linear-constraint}
Let \(g_k:[0, 1]\to\mathbb{R}\) be bounded
\(\lipschitz{g_{k}}\)-Lipschitz functions for \(k\in[K]\). Let
\(A\in\mathbb{R}^{K\times I}\) be the discretization
\(A[k,i] = g_k(t[i])\), and define $\coarse{A}=A[:,\texttt{begin}\mathbin{:}2\mathbin{:}\texttt{end}]$ as the $(K\times \left\lceil I\right\rceil_e/2)$-submatrix with every other column removed, where
\(\left\lceil I\right\rceil_e\) rounds up to the nearest even integer. Define $\lipschitz{fg} = \max_{k\in[K]}(\lipschitz{fg_k}) = \max_{k\in[K]}(\lipschitz{f} \left\lVert g_k \right\rVert_{\infty} + \lipschitz{g_k} \left\lVert f \right\rVert_{\infty})$ as the maximum Lipschitz constant of the product functions $\{fg_k\}_{k\in[K]}$.
If \(Ax=b\in\mathbb{R}^K\), then
\begin{alignat*}{2}
\left\lVert  \coarse{A}\coarse{x} - \half b \right\rVert _2
&\leq \sqrt{K}\cdot \tfrac14\lipschitz{fg}  \tfrac{I}{I-1}
&&\quad\text{for even $I$},
\\
\left\lVert  \coarse{A}\coarse{x} - \tfrac{I+1}{I}\half b \right\rVert _2
&\leq \sqrt{K}\cdot \half\lipschitz{fg} 
&&\quad\text{for odd $I$}.
\end{alignat*}
\end{corollary}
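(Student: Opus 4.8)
The plan is to reduce the matrix inequality to a row-by-row application of \cref{prp-linear-constraint-scaling} and then collect the $K$ resulting scalar bounds into a single $\ell_2$ bound. First I would write the matrix equation $Ax=b$ as the $K$ scalar constraints $\langle a_k, x\rangle = b[k]$ for $k\in[K]$, where $a_k\in\mathbb{R}^I$ denotes the $k$th row of $A$, so that $a_k[i]=g_k(t[i])$. Each $a_k$ therefore discretizes the bounded $\lipschitz{g_k}$-Lipschitz function $g_k$, and $x$ discretizes the bounded $\lipschitz{f}$-Lipschitz function $f$ of the ambient setup; this is exactly the hypothesis of \cref{prp-linear-constraint-scaling} with $g=g_k$.

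Next I would identify the $k$th entry of the coarsened product. Since $\coarse{A}$ retains the odd-indexed columns of $A$ and $\coarse{x}$ retains the odd-indexed entries of $x$ (\cref{def-subsampled-coarsening}), we have $(\coarse{A}\coarse{x})[k] = \sum_i A[k,2i-1]\,x[2i-1] = \langle \coarse{a_k},\coarse{x}\rangle$, where $\coarse{a_k}$ is the dyadic coarsening of $a_k$; the column count $\lceil I\rceil_e/2$ matches $\lfloor(I+1)/2\rfloor$ in both the even and odd cases, so the matrix–vector product is well posed. Applying \cref{prp-linear-constraint-scaling} to the pair $(a_k,x)$ bounds $\bigl|(\coarse{A}\coarse{x})[k] - \half b[k]\bigr|$ for even $I$, and $\bigl|(\coarse{A}\coarse{x})[k] - \half\tfrac{I+1}{I}b[k]\bigr|$ for odd $I$, by the stated right-hand side with $\lipschitz{fg_k}$ in place of $\lipschitz{fg}$. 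Because $\lipschitz{fg}=\max_{k\in[K]}\lipschitz{fg_k}$ by definition, the same bound holds uniformly in $k$ after replacing $\lipschitz{fg_k}$ by $\lipschitz{fg}$.

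Finally I would assemble the result. Write $r$ for the residual vector $\coarse{A}\coarse{x}-\half b$ in the even case and $\coarse{A}\coarse{x}-\half\tfrac{I+1}{I}b$ in the odd case; the previous step gives $|r[k]|\le C$ for all $k$, with $C=\tfrac14\lipschitz{fg}(u-\ell)\tfrac{I}{I-1}$ (even) or $C=\half\lipschitz{fg}(u-\ell)$ (odd). Then $\norm{r}_2=\bigl(\sum_{k=1}^K r[k]^2\bigr)^{1/2}\le\sqrt{K}\,C$, which is exactly the claimed inequality in each case.

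There is no substantive obstacle here: the corollary is a direct componentwise lift of \cref{prp-linear-constraint-scaling}. The only points requiring care are bookkeeping — verifying that the column-subsampling defining $\coarse{A}$ and the entry-subsampling defining $\coarse{x}$ align so that $(\coarse{A}\coarse{x})[k]=\langle\coarse{a_k},\coarse{x}\rangle$ (including the dimension count for even and odd $I$), and that each per-row constant $\lipschitz{fg_k}$ is dominated by $\lipschitz{fg}$ before passing from the pointwise bounds to $\norm{r}_2\le\sqrt{K}\max_k|r[k]|$.
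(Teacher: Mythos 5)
Your proposal is correct and follows the same route the paper takes: apply \cref{prp-linear-constraint-scaling} row by row (noting that $(\coarse{A}\coarse{x})[k]=\langle\coarse{a_k},\coarse{x}\rangle$ under dyadic coarsening), replace each $\lipschitz{fg_k}$ by the maximum $\lipschitz{fg}$, and pass to the $\ell_2$ norm via $\norm{r}_2\le\sqrt{K}\max_k\lvert r[k]\rvert$. The dimension bookkeeping you verified ($\lceil I\rceil_e/2=\lfloor(I+1)/2\rfloor$ in both parities) is the only delicate point, and you handled it correctly.
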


\begin{proof}
Let \(a_k = A[k,:]\in\mathbb{R}^{I}\) be the \(k\)th row vector in
\(A\). By \cref{prp-linear-constraint-scaling},
\begin{align*}
\textstyle
\left\lVert  \coarse{A}\coarse{x} - \frac{b}{2} \right\rVert _2^2 &= \textstyle\sum_{k=1}^K  \left\lvert \left\langle \coarse{a}_k, \coarse{x} \right\rangle - \frac{b[k]}{2} \right\rvert^2 \leq \textstyle\sum_{k=1}^K \left(\tfrac{I}{I-1} \tfrac{\lipschitz{fg_k} }{4}\right)^2 \\
&\leq \left(\tfrac{I}{I-1} \tfrac{1}{4}\right)^2 \textstyle\sum_{k=1}^K \max_k\{(\lipschitz{fg_k})^2\} = \left(\tfrac{I}{I-1} \tfrac{1}{4}\right)^2 (\lipschitz{fg})^2 K,
\end{align*}
for even \(I\).
Taking square roots completes this case, and the case for odd \(I\) is
similar.
\end{proof}

The 1-norm constraint admits analogous scaling bounds.

\begin{proposition}[L1-norm Constraint
Scaling]\protect\hypertarget{prp-l1-norm-constraint-scaling}{}\label[proposition]{prp-l1-norm-constraint-scaling}
Let $x\in\mathbb{R}^I$ discretize a bounded $\lipschitz{f}$-Lipschitz function $f$.  If \(\lVert x \rVert_1 = b\), then the coarsened constraint satisfies the bounds
\begin{alignat*}{2}
  \left\lvert \left\lVert \coarse{x} \right\rVert_1 - \half b \right\rvert &\leq \tfrac14\lipschitz{f}  \tfrac{I}{I-1}
  &&\quad\text{for even $I$},
  \\
  \left\lvert \left\lVert \coarse{x} \right\rVert_1 - \tfrac{I+1}{I}\half b \right\rvert &\leq \half\lipschitz{f}
  &&\quad\text{for odd $I$}.
\end{alignat*}
\end{proposition}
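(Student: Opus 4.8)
The plan is to obtain this as a direct corollary of \cref{prp-linear-constraint-scaling} by taking $g\equiv 1$ and replacing $f$ with $|f|$, rather than redoing the even/odd coarsening estimate. The observation driving the reduction is that $\norm{x}_1=\sum_{i=1}^I|x[i]|$ is the value of a linear functional at the vector $|x|$ of entrywise absolute values, and $|x|$ discretizes the function $|f|$ under the same sampling convention by which $x$ discretizes $f$, since $|x[i]|=|f(t_i)|$ (times the same nonnegative, $i$-independent grid prefactor, if one is used).

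First I would record the elementary facts that make the substitution legal. The map $|f|$ is bounded with $\norm{|f|}_\infty=\norm{f}_\infty$, and it is $\lipschitz{f}$-Lipschitz because $\bigl|\,|f(s)|-|f(t)|\,\bigr|\le|f(s)-f(t)|\le\lipschitz{f}|s-t|$. The constant function $g\equiv 1$ on $[\ell,u]$ is $0$-Lipschitz with $\norm{g}_\infty=1$, and its discretization $a$ is the all-ones vector, so $\coarse{a}$ is again all ones. Consequently the product Lipschitz constant appearing in \cref{prp-linear-constraint-scaling} collapses, $\lipschitz{|f|g}=\lipschitz{f}\norm{g}_\infty+\lipschitz{g}\norm{|f|}_\infty=\lipschitz{f}$. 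Finally, dyadic coarsening commutes with the entrywise absolute value, $\coarse{|x|}[i]=|x|[2i-1]=|x[2i-1]|=|\coarse{x}[i]|$, whence $\langle a,|x|\rangle=\norm{x}_1=b$ and $\langle\coarse{a},\coarse{|x|}\rangle=\sum_i|\coarse{x}[i]|=\norm{\coarse{x}}_1$.

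With these identifications, the hypothesis $\langle a,|x|\rangle=b$ of \cref{prp-linear-constraint-scaling} holds, and its two conclusions become, after substituting $\langle\coarse{a},\coarse{|x|}\rangle=\norm{\coarse{x}}_1$ and $\lipschitz{|f|g}=\lipschitz{f}$, exactly the claimed bounds: $\bigl|\,\norm{\coarse{x}}_1-\half b\,\bigr|\le\tfrac14\lipschitz{f}(u-\ell)\tfrac{I}{I-1}$ for even $I$, and $\bigl|\,\norm{\coarse{x}}_1-\half\tfrac{I+1}{I}b\,\bigr|\le\half\lipschitz{f}(u-\ell)$ for odd $I$.

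I expect the only delicate point — and the one to check carefully — to be that ``$|x|$ discretizes $|f|$'' genuinely matches the discretization convention under which \cref{prp-linear-constraint-scaling} was stated and proved: since any grid-dependent prefactor (such as $(\Delta t)^{1/p}$) is nonnegative and independent of the index $i$, passing to absolute values entrywise yields the correspondingly scaled samples of $|f|$, so this causes no trouble. If one prefers to avoid the reduction, the same two bounds follow by copying the even-$I$ and odd-$I$ case analysis from the proof of \cref{prp-linear-constraint-scaling} with $g\equiv 1$, using $\bigl|\,|x[i]|-|x[i+1]|\,\bigr|\le\lipschitz{f}\,\Delta t$ where that proof invokes the product estimate; both routes give the result.
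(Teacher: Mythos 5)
Your proposal is correct, and it reaches the stated bounds by a slightly different (and arguably cleaner) route than the paper. The paper's proof sketch stays inside the argument of \cref{prp-linear-constraint-scaling}: it sets $a=\mathrm{sign}(x)$ so that $\langle a,x\rangle=\norm{x}_1$ and then reruns that proof, replacing the product estimate $L_{fg}\,\Delta t$ by the direct bound $\bigl|\,|x[i]|-|x[i+1]|\,\bigr|\le\lvert x[i]-x[i+1]\rvert\le \lipschitz{f}\,\Delta t$; this is a proof adaptation rather than an application of the proposition, since $\mathrm{sign}(x)$ need not discretize a Lipschitz function. You instead invoke \cref{prp-linear-constraint-scaling} as a black box with $g\equiv 1$ (so $a=\mathbf{1}$, $\lipschitz{g}=0$, $\norm{g}_\infty=1$) and with $f$ replaced by $|f|$, using that $|f|$ is bounded and $\lipschitz{f}$-Lipschitz by the reverse triangle inequality, that dyadic coarsening commutes with entrywise absolute value, and that $\lipschitz{|f|g}=\lipschitz{f}$, which yields exactly the even- and odd-$I$ bounds. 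The mathematical content is the same elementary estimate in both cases, but your reduction avoids re-entering the supplement's proof, at the cost of the (correctly checked) bookkeeping that $|x|$ discretizes $|f|$ under the sampling convention $x[i]=f(t_i)$ used in this section; your fallback of redoing the case analysis with $g\equiv 1$ is precisely the paper's route.
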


\begin{proof}
The proof is similar to the proof of \cref{prp-linear-constraint-scaling} (see \cref{sec-linear-constraint-scaling-proof})
by considering the vector \(a=\mathrm{sign}(x)\), and using the
Lipschitz constant \(\lipschitz{f}\) for \(f\) to bound
\(\left\lvert x[i] - x[i+1] \right\rvert\) in place of
\(\lipschitz{fg}\).
\end{proof}

Proposition \ref{prp-l1-norm-constraint-scaling} specializes to a clean asymptotic behavior in the normalized case. For \(z = x / \lVert x \rVert_1\), the bound for even \(I\) becomes
\[
  \left\lvert \left\lVert \coarse{z} \right\rVert_1 - \half\right\rvert \leq \tfrac1{4b}\lipschitz{f}  \tfrac{I}{I-1}.
\]
As the discretization refines (\(I\to\infty\)), we have \(b=\lVert x \rVert_1\to\infty\), so the bound vanishes and \(\left\lVert\coarse{z} \right\rVert_1 \to 1/2\). The same limit holds for odd \(I\), and also extends to other \(p\)-norms.

\subsection{Greedy and lazy multiscale algorithms}\label{sec-two-multiscale-algorithms}

We present two variants of the multiscale method (\cref{alg-multiscale}) outlined in \cref{sec-method-overview} for solving \cref{eq-abstract-continuous-problem}: the \emph{greedy} variant, which optimizes all grid points at each scale, and the \emph{lazy} variant, which optimizes only the newly interpolated points at each finer scale. Both variants apply an update rule \(x^{k+1}=U(x^k)\) at each scale, and our convergence analysis holds for any such update rule that achieves \(q\)-linear iterate convergence in the following sense.

\begin{definition}[\(q\)-linear iterate convergence]
  \protect\hypertarget{def-iterate-convergence}{}\label[definition]{def-iterate-convergence}
An iterative algorithm with update rule \(x^{k+1}=U(x^k)\) has global q-linear iterate
convergence with rate \(q\in[0,1)\) when, for any initialization $x^0\in\tilde{\mathcal{C}}$, there is a minimizer $x^*$ of $\tilde{\mathcal{L}}$ over $\tilde{\mathcal{C}}$ (which may depend on \(x^0\)) such that each update contracts the distance to $x^*$ by a factor of $q$:
\begin{equation}\label{eq-q-linear}
\left\lVert x^{k+1}-x^* \right\rVert _2 \leq q \left\lVert x^{k} -x^* \right\rVert _2  \quad\text{for all $k\geq0$}.
\end{equation}
Consequently, $\|x^K - x^*\|_2\le q^K\|x^0-x^*\|$ after $K$ updates. 
\end{definition}

Our software implementation (\cref{sec-numerical-experiments}) uses projected gradient descent as the update rule, described by
\begin{equation}\label{def-projected-gradient-decent}
x^{k+1} \leftarrow U(x^k) = P_{\tilde{\mathcal{C}}}\left( x^k - \alpha\nabla\tilde{\mathcal{L}}(x^k) \right)
\end{equation}
where
\(
P_{\tilde{\mathcal{C}}}(x) = \mathop{\rm argmin} \Set{\norm{x - y}_2 | y\in\tilde{\mathcal{C}}}
\)
denotes the projection of $x$ onto \(\tilde{\mathcal{C}}\); when $\tilde{\mathcal{C}}$ is nonconvex the projection may be non-unique, in which case any selection may be used.
The standard analysis gives the following %
$q$-linear rate for smooth, strongly convex objectives over closed convex constraint sets; the smoothness and strong convexity constants for the discretized loss are notated with $\mathcal{S}_{\tilde{\mathcal{L}}}$ and $\mu_{\tilde{\mathcal{L}}}$.
\begin{lemma}[Descent
Lemma {\cite[Theorem 2.1.15]{nesterov-smooth-2018}}]\protect\hypertarget{lem-descent-lemma}{}\label[lemma]{lem-descent-lemma}
Projected gradient descent with stepsize \(0<\alpha \leq 2/({\smoothness{\tilde{\mathcal{L}}}+\convexity{\tilde{\mathcal{L}}}})\) has iterate
convergence with rate
\[
q(\alpha) = \sqrt{1 - \tfrac{2\alpha \smoothness{\tilde{\mathcal{L}}} \convexity{\tilde{\mathcal{L}}}}{\smoothness{\tilde{\mathcal{L}}} + \convexity{\tilde{\mathcal{L}}}}}.
\]
This is minimized at a stepsize of
\(\alpha = {2}/({\smoothness{\tilde{\mathcal{L}}}+\convexity{\tilde{\mathcal{L}}}})\),
where we obtain the iterate convergence
\(
q(\alpha) = ({c-1})/({c+1})
\)
for a \emph{condition number} of
\(c=\smoothness{\tilde{\mathcal{L}}} /\convexity{\tilde{\mathcal{L}}}\).
\end{lemma}

Projected gradient descent over a convex set is thus one update rule that satisfies \cref{def-iterate-convergence}; the analysis below uses only the definition, never the specific iterate update. Although we prescribe the number of iterations $K_S$ at each scale in advance, in prace we may also additional convergence criteria (such as a small gradient) at each scale to terminate early. 

\begin{algorithm}[t]
  \caption{The Multiscale Method: step 6 specifies Greedy or Lazy versions}
\label{alg-multiscale}\label{alg-lazy-multiscale}\label{alg-greedy-multiscale}
  \begin{algorithmic}[1]

\STATE{
  Randomly initialize \(x_s^0\) at the coarsest scale \(s=S\).}
\STATE{
  Perform \(K_s\) iterations of a $q$-linear update $U(x)$ \cref{eq-q-linear} on \(x_{s}^0\) to approximately solve
  \cref{eq-scale-s-discretized-problem} at scale \(s=S\) and obtain
  \(x_s^{K_s}\).}
\FOR{scales \(s = S-1, S-2, \dots, 1\)}
\STATE{
  Interpolate \(x_{s+1}^{K_{s+1}}\) to obtain \(\interp{x}_{s+1}^{K_{s+1}}\) according
  to \cref{def-midpoint-linear-interpolation}.}
\STATE{
  Initialize the next finest scale \(x_{s}^0=\interp{x}_{s+1}^{K_{s+1}}\)
  using the previous scale's interpolated solution.}
\STATE{
  Perform \(K_s\) iterations of a $q$-linear update $U(x)$ \cref{eq-q-linear} on \(x_s^0\) (Greedy) or \(x_{(s)}^0\) (Lazy) to approximately solve
  \cref{eq-scale-s-discretized-problem} at scale \(s\) and obtain \(x_s^{K_s}\).
}
  \ENDFOR
\RETURN Approximate solutions \(x_1^{K_1}\) and \(\hat{f}_{x_1^{K_1}}\) for \cref{eq-scale-s-discretized-problem} at \(s=1\) and \cref{eq-abstract-continuous-problem} respectively.

  \end{algorithmic}
\end{algorithm}

\section{Convergence to the fine-scale solution}\label{sec-multiscale-convergence}

Both the standard and multiscale approaches converge because, by design, they both apply a convergent algorithm at the finest scale. The multiscale approach converges faster because coarse-scale solutions warm start the fine-scale iterations. This section formalizes this claim. First, interpolation-error lemmas (\cref{sec-analysis-lemmas}) bound the error injected when a coarse solution is carried to the next finer grid: midpoint interpolation of exact values of an $L_f$-Lipschitz function has error at most $\half L_f/\sqrt{I-1}$ (\cref{lem-exact-interpolation}), and interpolating \emph{inexact} values inflates that error by at most $\sqrt2$ while adding the same interpolation term (\cref{lem-inexact-interpolation}).
Chaining the inner-solver contraction (\cref{def-iterate-convergence}) with interpolation from the coarest scale to the finest scale telescopes into a closed-form bound for both greedy (\cref{sec-greedy-multiscale}) and lazy (\cref{sec-lazy-multiscale}) multiscale methods.

\subsection{Interpolation error for Lipschitz functions}\label{sec-analysis-lemmas}

To analyze multiscale method convergence, we first establish bounds on discretizing Lipschitz functions and approximating them with linear interpolations. Throughout, we solve the discretized problem
\cref{eq-scale-s-discretized-problem} in one dimension with any
algorithm that achieves iterate convergence at the rate specified in \cref{def-iterate-convergence}.

The following lemma provides the foundation for analyzing linear interpolations of Lipschitz functions and, ultimately, our multiscale algorithm.

\begin{lemma}[Lipschitz Function
Interpolation]\protect\hypertarget{lem-lipschitz-interpolation}{}\label[lemma]{lem-lipschitz-interpolation}
Let \(f:\mathbb{R}^N \to \mathbb{R}\) be \(\lipschitz{f}\)-Lipschitz.
For all \(a,b\in \mathbb{R}^N\) and
\(\lambda_1,\lambda_2\ge0\), $\lambda_1+\lambda_2=1$, the error between \(f\) evaluated at a convex combination of \(a\) and \(b\) and the linear interpolation of $f(a)$ and $f(b)$ satisfies
\begin{equation}\phantomsection\label{eq-lipschitz-interpolation-bound}{
\left\lvert f(\lambda_1 a + \lambda_2 b) - (\lambda_1 f(a)+\lambda_2 f(b)) \right\rvert\leq 2\lipschitz{f} \lambda_1\lambda_2\left\lVert a-b \right\rVert _2.
}\end{equation}
\end{lemma}

\begin{proof}
See \cref{sec-lipschitz-interpolation-proof} and \cref{fig-lipschitz-interpolation}.
\end{proof}

\begin{figure}[t]
\centering
\includegraphics[width=0.75\linewidth,height=\textheight,keepaspectratio]{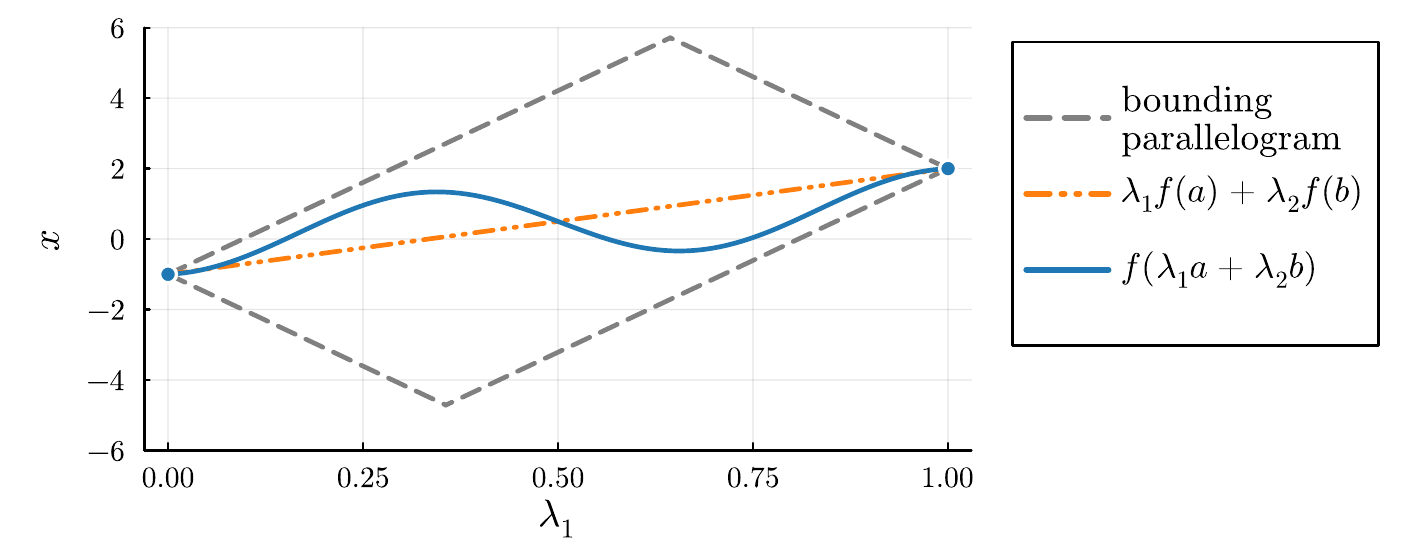}
\caption{Example for
\cref{lem-lipschitz-interpolation}\label{fig-lipschitz-interpolation} with the function
\(x=f(t) = t - \cos(3\pi t)\) on the interval \([0, 1]\). The
function \(f\) must lie within the dashed parallelogram since
it is \(\lipschitz{f}=1+3\pi\) Lipschitz.
\cref{lem-lipschitz-interpolation} uses the parallelogram constraint to bound the
distance between \(f\) and the linear interpolation
\(\lambda_1 f(a) + \lambda_2 f(b)\).
\Cref{lem-lipschitz-interpolation-tightness} shows that the
bound in \cref{eq-lipschitz-interpolation-bound} is tight for any
given \(\lambda_1,\lambda_2\ge0\) with \(\lambda_1+\lambda_2=1\).}
\end{figure}

Since \cref{lem-lipschitz-interpolation} is central to the
analysis of linear interpolations of Lipschitz functions and ultimately
our multiscale algorithm, we require a tight bound on
the interpolation error. The bound in~\eqref{eq-lipschitz-interpolation-bound}
is optimal among all bounds independent of the endpoint values \(f(a)\) and \(f(b)\), as demonstrated by the
explicit
construction in the proof of
\cref{lem-lipschitz-interpolation-tightness}.

\begin{lemma}[Lipschitz Interpolation
Tightness]\protect\hypertarget{lem-lipschitz-interpolation-tightness}{}\label[lemma]{lem-lipschitz-interpolation-tightness}
Given \(\lambda_1,\lambda_2\ge0\) with \(\lambda_1+\lambda_2=1\), points \(a,b\in\mathbb{R}^N\), and constant
\(\lipschitz{f}>0\), there exists an \(\lipschitz{f}\)-Lipschitz
function \(f:\mathbb{R}^N\to\mathbb{R}\) such that~\cref{eq-lipschitz-interpolation-bound} holds with equality.
\end{lemma}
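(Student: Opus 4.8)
The plan is to exhibit an explicit extremal function: a symmetric ``tent'' (absolute-value) function whose apex sits exactly at the convex-combination point. First I would dispose of the degenerate cases $\lambda_1=0$ or $\lambda_2=0$. In either case the right-hand side of~\cref{eq-lipschitz-interpolation-bound} is $0$ and the left-hand side reduces to $|f(a)-f(a)|=0$ for \emph{any} $\lipschitz{f}$-Lipschitz function (for instance a constant), so equality holds trivially. Hence for the remainder assume $\lambda_1,\lambda_2>0$.

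For the main case I would take the two evaluation points to be the endpoints of the interval, $a=\ell$ and $b=u$, so that $\norm{a-b}_2 = u-\ell$ and the convex combination $c := \lambda_1 a + \lambda_2 b = \lambda_1\ell + \lambda_2 u$ lies strictly inside $(\ell,u)$ with $c-\ell = \lambda_2(u-\ell)$ and $u-c = \lambda_1(u-\ell)$. I would then define $f(t) = -\lipschitz{f}\,|t-c|$ on $[\ell,u]$; this is $\lipschitz{f}$-Lipschitz, being $\lipschitz{f}$ times the $1$-Lipschitz map $t\mapsto |t-c|$, negated. If nonnegativity is desired (as in the density applications of the paper) one can add the constant $\lipschitz{f}(u-\ell)$, which changes neither the Lipschitz constant nor the interpolation error, so the lemma is unaffected.

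It then remains to compute both sides of~\cref{eq-lipschitz-interpolation-bound} for this $f$. At the apex, $f(\lambda_1 a+\lambda_2 b) = f(c) = 0$. At the endpoints, $f(a)=f(\ell) = -\lipschitz{f}(c-\ell) = -\lipschitz{f}\lambda_2(u-\ell)$ and $f(b)=f(u) = -\lipschitz{f}(u-c) = -\lipschitz{f}\lambda_1(u-\ell)$, so the linear interpolation is
\[
\lambda_1 f(a) + \lambda_2 f(b) = -\lipschitz{f}(u-\ell)\bigl(\lambda_1\lambda_2 + \lambda_2\lambda_1\bigr) = -2\lipschitz{f}\lambda_1\lambda_2(u-\ell).
\]
Therefore the left-hand side equals $\bigl|0 - (-2\lipschitz{f}\lambda_1\lambda_2(u-\ell))\bigr| = 2\lipschitz{f}\lambda_1\lambda_2(u-\ell) = 2\lipschitz{f}\lambda_1\lambda_2\norm{a-b}_2$, which is exactly the right-hand side, so~\cref{eq-lipschitz-interpolation-bound} holds with equality.

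There is no genuine obstacle here, since the result is a construction, but the one point worth flagging is \emph{why} the naive guess fails: if one instead fixes $f(\ell)=f(u)=0$ and bends a tent over $c$, the Lipschitz condition forces $|f(c)|\le \lipschitz{f}\min(\lambda_1,\lambda_2)(u-\ell)$, and since $\lambda_1+\lambda_2=1$ one has $2\lambda_1\lambda_2\ge \min(\lambda_1,\lambda_2)$, so that candidate is not extremal. The fix is to choose the endpoint values so that \emph{both} one-sided Lipschitz inequalities (from $c$ to $\ell$ and from $c$ to $u$) are simultaneously tight, which forces the V-shape above and maximizes the interpolation gap. The same construction shows that~\cref{eq-lipschitz-interpolation-bound} is the best possible bound depending only on $\lipschitz{f}$, $\lambda_1,\lambda_2$, and $\norm{a-b}_2$ and not on the endpoint values $f(a),f(b)$, matching the claim made before the statement.
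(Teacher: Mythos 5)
Your construction is essentially the paper's: the paper takes $f(t)=\lipschitz{f}\lvert t-(\lambda_1\ell+\lambda_2 u)\rvert$ with $a=\ell$, $b=u$, and your $f(t)=-\lipschitz{f}\lvert t-c\rvert$ is just its negation, yielding the same interpolation error $2\lipschitz{f}\lambda_1\lambda_2\lvert u-\ell\rvert$ by the same computation. The extra touches (degenerate cases $\lambda_1\lambda_2=0$ and the remark on why anchoring $f(\ell)=f(u)=0$ is not extremal) are correct but not needed; the proof is right and matches the paper's approach.
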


\begin{proof}
Construct
\(
f(x) = \lipschitz{f}\left\lVert x - (\lambda_1 a + \lambda_2 b)\right\rVert_2.
\)
This function is \(\lipschitz{f}\)-Lipschitz with end-point values
\(f(a) =  \lipschitz{f} \lambda_2 \left\lVert a - b \right\rVert_2\) and
\(f(b)= \lipschitz{f}\lambda_1\left\lVert a - b \right\rVert_2\), and
\(f( \lambda_1 a + \lambda_2 b)=0\). Therefore
\(
\left\lvert \lambda_1 f(a) + \lambda_2 f(b) - f( \lambda_1 a + \lambda_2 b)\right\rvert
=2\lipschitz{f}\lambda_1\lambda_2\left\lVert a - b \right\rVert_2.
\)
\end{proof}

To bound the error introduced when interpolating a coarse-scale solution to a finer scale, we apply \cref{lem-lipschitz-interpolation} repeatedly with centre point interpolation ($\lambda_1=\lambda_2=1/2$). This bounds the error between an exact fine-scale discretization
\(x_s^*[j] = f(t_s[j])\) and the linear interpolation
\(x_s=\interp{x}_{s+1}\) obtained from a coarser discretization
\(x_{s+1}[i]=f(t_{s+1}[i])\). See
\cref{fig-exact-and-inexact-interpolation} (left).

\begin{lemma}[Exact
Interpolation]\protect\hypertarget{lem-exact-interpolation}{}\label[lemma]{lem-exact-interpolation}
Let \(f:[0, 1]\to \mathbb{R}\) be \(\lipschitz{f}\)-Lipschitz continuous. On the interval \([0, 1]\), let \(t_s\) be a fine grid with \(J=2I-1\) points, and \(t_{s+1}\) be a coarse grid with \(I\) points. Discretize
the function exactly at both scales to obtain vectors
\(x^*_s\in\mathbb{R}^J\) and \(x^*_{s+1}\in\mathbb{R}^I\) where
\(x^*_s[j] = f(t_s[j])\) and \(x^*_{s+1}[i] = f(t_{s+1}[i])\).
Let \(x_{s+1}=x^*_{s+1}\in\mathbb{R}^I\) and linearly interpolate to
obtain \(x_s=\interp{x}_{s+1}\in\mathbb{R}^J\); see \cref{def-midpoint-linear-interpolation}.
Then the difference between the interpolated \(x_s\) and exact values
\(x_s^*\) is bounded by
\begin{equation}\phantomsection\label{eq-lem-exact-interpolation}{
   \left\lVert x_s - x_s^* \right\rVert_{2}
   \leq\half\lipschitz{f}/\sqrt{ I-1 }.
}\end{equation}

\end{lemma}

\begin{proof}
See \cref{sec-exact-interpolation-proof}.
\end{proof}

When the coarse discretization contains error, the interpolation analysis extends naturally. Suppose we interpolate not from exact values \(x_{s+1}[i]=x_{s+1}^*[i]=f(t_{s+1}[i])\), but
from \emph{approximate values}
\(x_{s+1}[i] = x_{s+1}^*[i] + \delta [i] = f(t_{s+1}[i]) + \delta [i]\), where $\delta$ is an $I$-vector that represents the error at the coarse scale.
\cref{fig-exact-and-inexact-interpolation} (right) visualizes this scenario.

\begin{lemma}[Inexact
Interpolation]\protect\hypertarget{lem-inexact-interpolation}{}\label[lemma]{lem-inexact-interpolation}
Consider the setup of \cref{lem-exact-interpolation} with one modification:
the coarse discretization \(x_{s+1}=x^*_{s+1}+\delta\) contains error $\delta\in\mathbb{R}^I$ before we interpolate to
obtain \(x_s=\interp{x}_{s+1}\). The difference between the interpolated values  \(x_s\) and exact values
\(x_s^*\) satisfies
\begin{equation}\phantomsection\label{eq-lem-inexact-interpolation}{
  \left\lVert x_s - x_s^* \right\rVert_2
  \leq
  \sqrt{ 2 } \left\lVert \delta \right\rVert_2
   +
  \tfrac12\lipschitz{f}/\sqrt{ I-1 } .
}\end{equation}
\end{lemma}

\begin{proof}
See \cref{sec-inexact-interpolation-proof}.
\end{proof}

This bound decomposes into two terms corresponding to distinct error sources. The first term captures the error from interpolating approximate values
\(x_{s+1}=x^*_{s+1}+\delta\) rather than exact values \(x^*_{s+1}\).
The second term captures the linear interpolation error as in \cref{lem-exact-interpolation}.

\begin{figure}[t]
  \centering
  \includegraphics[width=0.495\linewidth,keepaspectratio]{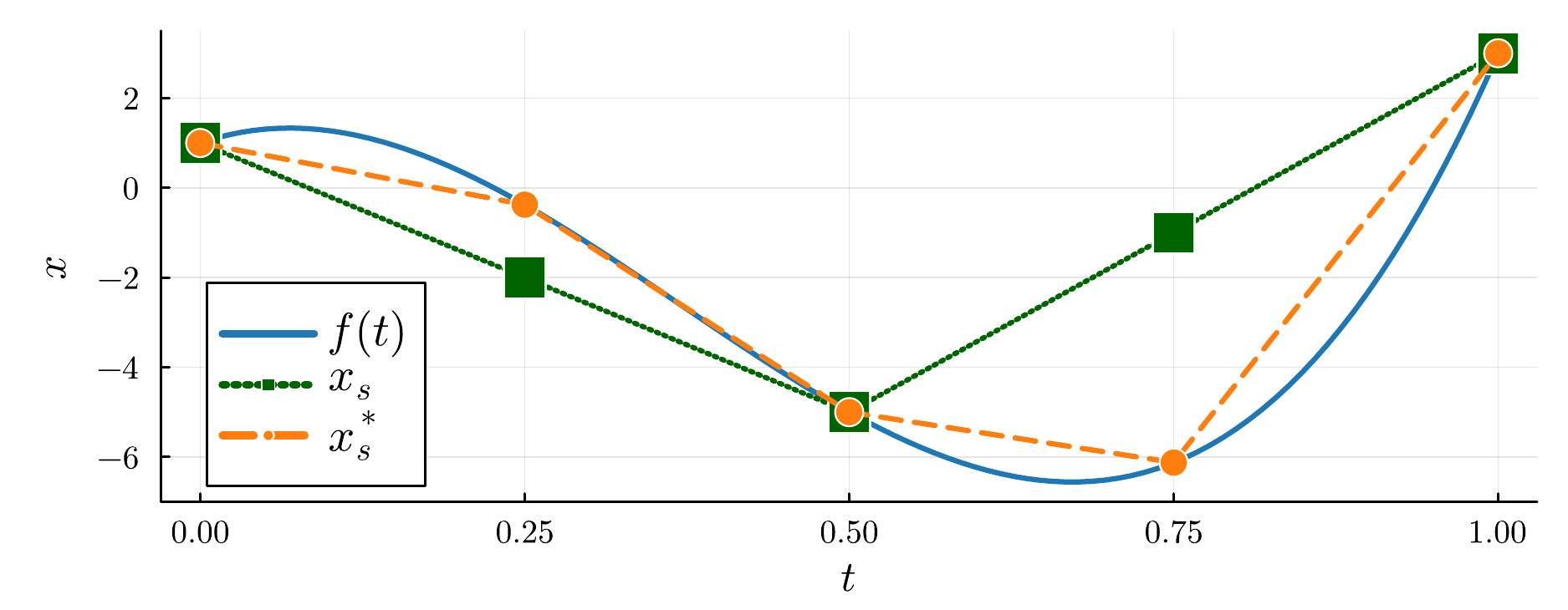}
  \includegraphics[width=0.495\linewidth,keepaspectratio]{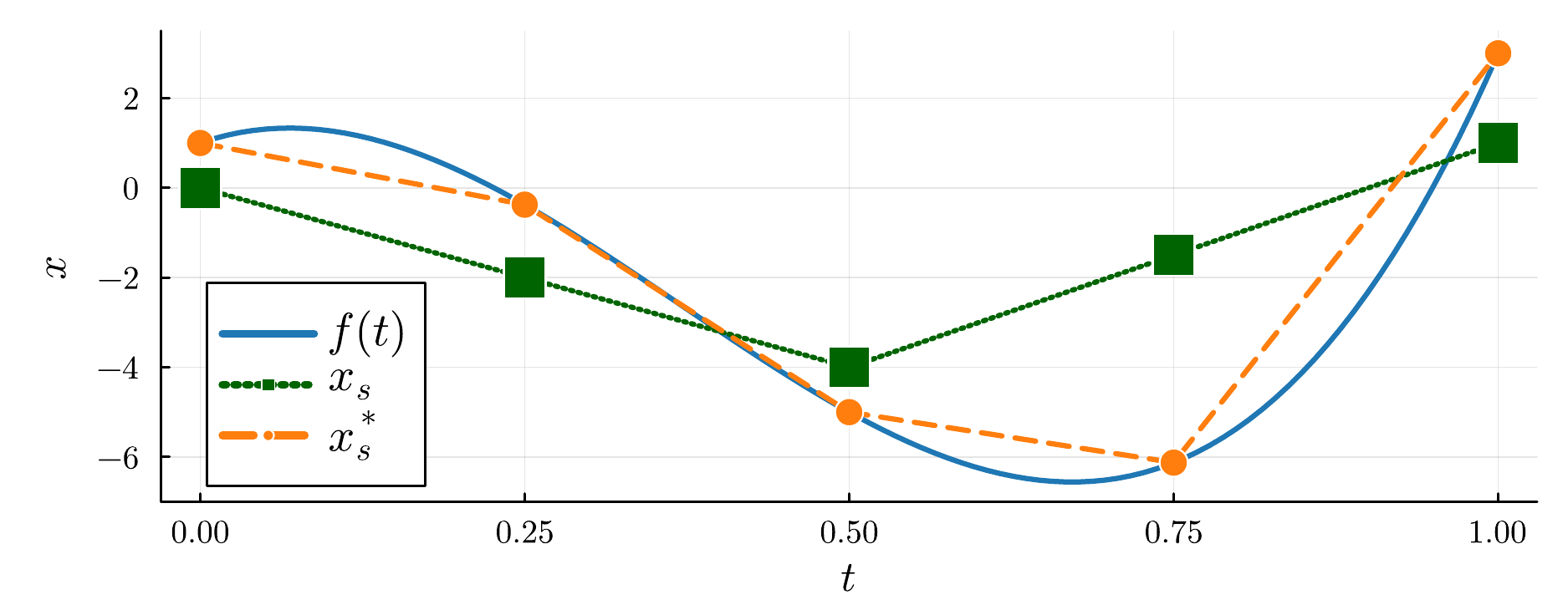}
  \caption{Illustration of
  \cref{lem-exact-interpolation} (left) and
  \cref{lem-inexact-interpolation} (right) for the function
  \(f(t)=72t^3-80t^2+10t+1\). Both lemmas bound the error between
  the true fine-scale discretization \(x^*_s\) (orange circles) and
  the linear interpolation \(x_s=\interp{x}_{s+1}\) (green squares)
  obtained from a coarser-scale discretization. The left figure shows
  \cref{eq-lem-exact-interpolation}, where \(x_{s+1}=x^*_{s+1}\) is exact.
  The right figure shows \cref{eq-lem-inexact-interpolation}, where
  \(x_{s+1}=x^*_{s+1}+\delta\) contains error.
  \label{fig-exact-and-inexact-interpolation}
  }
\end{figure}%

\subsection{Greedy multiscale}\label{sec-greedy-multiscale}

We use the following approach to show convergence of the greedy multiscale algorithm described in \cref{alg-greedy-multiscale}. At each scale \(s\), the
iterate convergence (\cref{def-iterate-convergence}) bounds the error between
the interpolated iterate \(\interp{x}_s^{K_s}\), which becomes the initialization $x^0_{s-1}=\interp{x}^{K_s}_s$ at the next finer scale, and the solution at that finer scale. Recursing from $s=S$ down to $s=1$ yields the following bound.

\begin{theorem}[Greedy Multiscale Error Bound]
\protect\hypertarget{thm-greedy-multiscale-descent-error}{}\label{thm-greedy-multiscale-descent-error}
Let \(q\) denote the rate of convergence of an algorithm
applied to a loss function \(\tilde{\mathcal{L}}\), let
\(\lipschitz{f^*}\) be the Lipschitz constant of the
solution function \(f^*\), and let \(S\) denote the coarsest scale. The greedy multiscale method in \cref{alg-greedy-multiscale} returns iterate \(x_1^{K_1}\) at the finest scale satisfying
\[
  \norm{x^{K_1}_{1} - x^*_{1} }_2 \leq \sqrt{ 2 ^{S-1}}q^{r_S}\left\lVert x_{S}^{0} -x_{S}^* \right\rVert _2 + \tfrac12\lipschitz{f^*}\tfrac{1}{\sqrt{2^{S+1}}}\sum\nolimits_{s=1}^{S-1}  2^{s} q^{r_s},
\]
where \(r_s=\sum_{t=1}^{s} K_{t}\) denotes the cumulative iteration count through scale $s$.
\end{theorem}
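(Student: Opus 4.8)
The plan is to propagate an error bound one scale at a time, from the coarsest scale $s=S$ down to the finest $s=1$, by alternating two ingredients: the $q$-linear iterate convergence of \cref{def-iterate-convergence} \emph{within} each scale, and the inexact interpolation bound of \cref{lem-inexact-interpolation} \emph{across} scales. Write $x_s^*$ for the scale-$s$ solution, which we take to be the exact discretization $x_s^*[i]=f^*(t_s[i])$ of the continuous solution $f^*$; because dyadic coarsening selects a nested subgrid, these exact discretizations are mutually consistent across scales ($\coarse{x}_{s-1}^*=x_s^*$), which is what lets us invoke \cref{lem-inexact-interpolation} with $f=f^*$ at every scale. Abbreviate $e_s^0:=\norm{x_s^0-x_s^*}_2$ for the error at the start of scale $s$ and $e_s:=\norm{x_s^{K_s}-x_s^*}_2$ for the error after its $K_s$ updates. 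The base case is immediate: applying \cref{def-iterate-convergence} at the coarsest scale to the feasible initialization $x_S^0$ gives $e_S\le q^{K_S}e_S^0$.

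For the inductive step I would chain the two estimates. The initialization at scale $s-1$ is $x_{s-1}^0=\interp{x}_s^{K_s}$, the midpoint interpolation of $x_s^{K_s}=x_s^*+\delta_s$ with coarse-scale error $\delta_s:=x_s^{K_s}-x_s^*$; since the scale-$s$ grid plays the role of the coarse grid (with $I_s$ points), \cref{lem-inexact-interpolation} gives
\[
 e_{s-1}^0=\norm{\interp{x}_s^{K_s}-x_{s-1}^*}_2\le \sqrt{2}\,\norm{\delta_s}_2+\lipschitz{f^*}\frac{\lvert u-\ell\rvert}{2\sqrt{I_s-1}}=\sqrt{2}\,e_s+D_s,
\]
where $D_s:=\lipschitz{f^*}\lvert u-\ell\rvert/(2\sqrt{I_s-1})$. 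Applying \cref{def-iterate-convergence} again at scale $s-1$ to $x_{s-1}^0$ yields $e_{s-1}\le q^{K_{s-1}}e_{s-1}^0$, and combining the two produces the one-step recursion
\[
 e_{s-1}\le \sqrt{2}\,q^{K_{s-1}}\,e_s+q^{K_{s-1}}D_s,\qquad s=S,S-1,\dots,2.
\]

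It then remains to solve the recursion and simplify the constants. Writing $a_t=\sqrt{2}\,q^{K_t}$ and $b_t=q^{K_t}D_{t+1}$ for $t=1,\dots,S-1$, the recursion reads $e_t\le a_t e_{t+1}+b_t$, whose solution together with $e_S\le q^{K_S}e_S^0$ is
\[
 e_1\le\Bigl(\,\prod_{t=1}^{S-1}a_t\Bigr)q^{K_S}e_S^0+\sum_{j=1}^{S-1}\Bigl(\,\prod_{t=1}^{j-1}a_t\Bigr)b_j.
\]
Since $\prod_{t=1}^{j-1}a_t=(\sqrt{2})^{j-1}q^{r_{j-1}}$ with $r_{j-1}=\sum_{t=1}^{j-1}K_t$ (empty product $=1$, $r_0=0$), the first term collapses to $(\sqrt{2})^{S-1}q^{r_{S-1}+K_S}e_S^0=\sqrt{2^{S-1}}\,q^{r_S}\norm{x_S^0-x_S^*}_2$, and the $j$th summand becomes $(\sqrt{2})^{j-1}q^{r_j}D_{j+1}$. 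Plugging in the dyadic grid sizes $I_{j+1}-1=2^{S-j}$ (equivalently $I_s=2^{S-s+1}+1$, the midpoint-refined chain built from a three-point coarsest grid), so $D_{j+1}=\lipschitz{f^*}\lvert u-\ell\rvert/(2\sqrt{2^{S-j}})$, the elementary identity $(\sqrt{2})^{j-1}/(2\sqrt{2^{S-j}})=2^{j}/(2\sqrt{2^{S+1}})$ turns the sum into $\lipschitz{f^*}\tfrac{\lvert u-\ell\rvert}{2\sqrt{2^{S+1}}}\sum_{j=1}^{S-1}2^{j}q^{r_j}$, which is the stated bound after renaming $j$ to $s$.

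The only genuinely delicate point is the compatibility stated in the first paragraph: \cref{def-iterate-convergence} certifies convergence to \emph{the minimizer} of $\tilde{\mathcal{L}}_s$, whereas \cref{lem-inexact-interpolation} needs $x_s^*$ and $x_{s+1}^*$ to be samples of one common Lipschitz $f^*$ on nested grids. Under the strong convexity used for projected gradient descent each $\tilde{\mathcal{L}}_s$ has a unique minimizer, and the dyadic grids are nested, so the identification $x_s^*[i]=f^*(t_s[i])$ with $\coarse{x}_{s-1}^*=x_s^*$ is consistent; I would make this explicit up front. A second, minor matter is that each interpolated initialization $x_{s-1}^0=\interp{x}_s^{K_s}$ must be feasible for \cref{def-iterate-convergence} to apply at scale $s-1$, which is exactly what the constraint scaling of \cref{constraints-with-multiscale} secures. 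Everything else is the bookkeeping above.
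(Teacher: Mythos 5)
Your proof is correct and takes essentially the same route as the paper: it alternates the q-linear iterate convergence of \cref{def-iterate-convergence} within each scale with the inexact interpolation bound of \cref{lem-inexact-interpolation} across scales, unrolls the resulting recursion $e_{s-1}\le\sqrt{2}\,q^{K_{s-1}}e_s+q^{K_{s-1}}D_s$, and substitutes the dyadic grid sizes $I_s=2^{S-s+1}+1$ to recover exactly the stated constants $\sqrt{2^{S-1}}\,q^{r_S}$ and $2^{s}/(2\sqrt{2^{S+1}})$. Your explicit remark that $x_s^*[i]=f^*(t_s[i])$ must be consistent across the nested grids (and that interpolated initializations must be feasible) is a sound clarification of assumptions the paper uses implicitly.
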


\begin{proof}
See \cref{sec-greedy-multiscale-descent-error-proof}.
\end{proof}

\subsection{Lazy multiscale}\label{sec-lazy-multiscale}

We establish convergence of the lazy multiscale algorithm using the framework from \cref{thm-greedy-multiscale-descent-error}. The key modification is to freeze the interpolated values $\interp{x}_s^{K_s}$ that correspond to points from the previous iteration $x_s^{K_s}$. The algorithm updates only the newly interpolated values, denoted $x_{(s)}^k$; see \cref{def-free-variables}.

\begin{theorem}[Lazy Multiscale Error Bound]
\protect\hypertarget{thm-lazy-multiscale-descent-error}{}\label{thm-lazy-multiscale-descent-error}
Assume the same setup as \cref{thm-greedy-multiscale-descent-error}. The lazy multiscale method in \cref{alg-lazy-multiscale} returns iterate \(x_1^{K_1}\) satisfying
\[
\begin{aligned}
\left\lVert x_{1}^{K_{1}} - x_{1}^{*} \right\rVert
\leq \left\lVert x_{S}^{0} - x_{S}^{*} \right\rVert d(K_{S})p(S) +
\half\lipschitz{f^*}\sum\nolimits_{s=1}^{S-1} \frac{1}{\sqrt{ 2^{S-s} }}d(K_{s})p(s),
\end{aligned}
\]
where \(d(K) = q^K \in (0,1)\) and \(p(s) = \prod_{j=1}^{s-1}(1+q^{K_{j}})\). When using constant iterations \(K_s=K\) at each scale, this
reduces to
\[
\begin{aligned}
\left\lVert x_{1}^{K_{1}} - x_{1}^{*} \right\rVert &\leq \left\lVert x_{S}^{0} - x_{S}^{*} \right\rVert d(K)(d(K)+1)^{S-1}  \\
&\qquad+\
\half\lipschitz{f^*} d(K)\frac{ \sqrt{2^S} \left( d(K) + 1 \right)^S - \sqrt{2} \left( d(K) + 1 \right) }{\sqrt{2^S} \left( d(K) + 1 \right) \left( \sqrt{2} d(K) + \sqrt{2} - 1 \right)}.
\end{aligned}
\]

\end{theorem}

\begin{proof}
See \cref{sec-lazy-multiscale-descent-error-proof}.
\end{proof}

We summarize the convergence with
\cref{cor-multiscale-convergence}.

\begin{corollary}[Multiscale
Convergence]\protect\hypertarget{cor-multiscale-convergence}{}\label[corollary]{cor-multiscale-convergence}
Suppose the number of iterations \(K_1\) at the finest scale grows unbounded \(K_1  \to  \infty\). Then under the settings described by \cref{thm-greedy-multiscale-descent-error,thm-lazy-multiscale-descent-error}, both multiscale algorithms (\cref{alg-greedy-multiscale,alg-lazy-multiscale}) converge to a solution \(x_1^*\) that solves \cref{eq-scale-s-discretized-problem} at the finest
scale \(s=1\).

\end{corollary}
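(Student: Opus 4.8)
The plan is to read both claims off the error bounds already proved in \cref{thm-greedy-multiscale-descent-error,thm-lazy-multiscale-descent-error}, combined with the $q$-linear iterate convergence of the base algorithm at the finest scale (\cref{def-iterate-convergence}), and then to take $K_1\to\infty$. The structural fact that makes the limit clean is that the finest-scale warm start $x_1^0=\interp{x}_2^{K_2}$ is produced entirely by the chain of coarse-scale solves and interpolations: it depends only on the fixed (random) coarsest initialization and on the coarse iteration counts $K_2,\dots,K_S$, which are held fixed, and not on $K_1$. Hence the reference minimizer $x_1^*$ to which the base algorithm converges at scale $s=1$ --- which by \cref{def-iterate-convergence} may depend on $x_1^0$ but on nothing else --- is a well-defined fixed vector, and $\norm{x_1^0-x_1^*}_2$ is a finite constant independent of $K_1$.

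For the greedy variant I would invoke \cref{thm-greedy-multiscale-descent-error} directly. Every summand of its right-hand side carries a factor $(q)^{r_s}$ with cumulative count $r_s=\sum_{t=1}^{s}K_t\ge r_1=K_1$, so with $q\in[0,1)$ each such factor is at most $(q)^{K_1}$. The remaining quantities --- $\sqrt{2^{S-1}}$, $\norm{x_S^0-x_S^*}_2$, $\lipschitz{f^*}$, $\left\lvert u-\ell\right\rvert$, the denominator $2\sqrt{2^{S+1}}$, and the finite sum $\sum_{s=1}^{S-1}2^s=2^S-2$ --- are all constants independent of $K_1$ (the depth $S$ is fixed). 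Thus the bound collapses to $\norm{x_1^{K_1}-x_1^*}_2\le C\,(q)^{K_1}$ for some $C=C(S,q,\lipschitz{f^*},u-\ell,x_S^0)$, and letting $K_1\to\infty$ yields $x_1^{K_1}\to x_1^*$, which solves \cref{eq-scale-s-discretized-problem} at $s=1$.

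For the lazy variant the quickest route is to argue directly from \cref{def-iterate-convergence}: at the finest scale the base update acts on the free variables $x_{(1)}$ (\cref{def-free-variables}) starting from the $K_1$-independent warm start $x_1^0$, so $\norm{x_1^{K_1}-x_1^*}_2\le(q)^{K_1}\norm{x_1^0-x_1^*}_2\to0$ as $K_1\to\infty$, where $x_1^*$ is the finest-scale minimizer reached by the algorithm. Alternatively one can pass to the limit in the closed-form bound of \cref{thm-lazy-multiscale-descent-error} in the constant-iteration regime $K_s=K$: as $K\to\infty$ one has $d(K)=(q)^K\to0^+$, so $d(K)(d(K)+1)^{S-1}\to0$, and the second term has numerator $O(d(K))$ while its denominator tends to the positive constant $\sqrt{2^S}(\sqrt2-1)$, so it vanishes as well; hence again $x_1^{K_1}\to x_1^*$.

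I expect the only real obstacle to be bookkeeping rather than mathematics: one must confirm that nothing concealed inside the constants depends on $K_1$ --- in particular that $x_1^*$ is pinned down by the $K_1$-independent warm start, so that "all other factors are fixed" is legitimate --- and that, because the sum over scales in each theorem is finite, the limit $K_1\to\infty$ may be taken termwise. Once this is in place the conclusion is immediate for both algorithms.
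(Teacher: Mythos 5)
Your greedy argument is sound and is the natural route: in \cref{thm-greedy-multiscale-descent-error} every term carries a factor $(q)^{r_s}$ with $r_s=\sum_{t=1}^{s}K_t\ge K_1$, all remaining quantities are independent of $K_1$ (and $x_1^*$ is fixed by the $K_1$-independent warm start), so the bound is $O((q)^{K_1})\to 0$.

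The lazy half has a genuine gap, and it lies precisely in the step you call the ``quickest route.'' At the finest scale the lazy method updates only the free variables $x_{(1)}$ while the remaining coordinates stay frozen at their interpolated values; the $q$-linear convergence of \cref{def-iterate-convergence} therefore applies to the \emph{restricted} subproblem (minimizing over $x_{(1)}$ with the frozen coordinates fixed), whose minimizer is in general not $x_1^*$, a solution of the full finest-scale problem. The error carried by the frozen coordinates is not reduced by any number of finest-scale iterations, so the inequality $\norm{x_1^{K_1}-x_1^*}_2\le (q)^{K_1}\norm{x_1^0-x_1^*}_2$ is not available. Indeed, this is exactly why the bound in \cref{thm-lazy-multiscale-descent-error} contains terms such as $(q)^{K_S}\prod_{s\ge 2}(1+(q)^{K_s})$ and $(q)^{K_s}$ for $s\ge 2$ that do \emph{not} vanish when only $K_1\to\infty$; if your direct claim were true, that theorem's bound would be needlessly weak. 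Your fallback argument is the correct one and should be promoted to the main argument: in the constant-iteration regime $K_s=K$ of \cref{thm-lazy-multiscale-descent-error}, letting $K\to\infty$ gives $d(K)=(q)^K\to 0$, the first term $d(K)(d(K)+1)^{S-1}\to 0$, and the second term vanishes since its numerator is $O(d(K))$ while its denominator tends to $\sqrt{2^S}(\sqrt{2}-1)>0$. Note, however, that this reading requires \emph{all} per-scale iteration counts to grow (which is how $K_1\to\infty$ must be interpreted for the lazy variant, since there $K_1=K$), not the finest-scale count alone; you should state this explicitly rather than suggest the two lazy arguments are interchangeable.
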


\section{Relation between the discretized and continuous
problems}\label{sec-relation-between-the-discretized-and-continuous-problems}

The previous section controls the error the fine-scale solution. 
This section relates solutions of the discretized and continuous problems
\cref{eq-scale-s-discretized-problem} and
\cref{eq-abstract-continuous-problem}. We show that a small error
$\|x_{1}^{K_{1}} - x^* \|_2$
in the fine-scale discretized problem solution implies a small error
$\|\hat{f}_{x_{1}^{K_{1}}} - f^* \|_2$
for the continuous problem, where $f^*$ solves \cref{eq-abstract-continuous-problem} and \(\hat{f}_{x_{1}^{K_{1}}}(t)\) denotes the piecewise linear function constructed from \(x_{1}^{K_{1}}\) according to \cref{def-piecewise-linear-approximation}.
We assume \(t\in\mathbb{R}^I\) forms a uniform grid on the interval
\([0, 1]\) with spacing
\(\Delta t = t[{i+1}] - t[i] = \frac{1}{I-1}\).

\begin{lemma}[Piecewise Linear Function Distance]\protect\hypertarget{lem-piecewise-function-distance}{}\label[lemma]{lem-piecewise-function-distance}
Let \(f,g:\mathbb{R}\to\mathbb{R}\), and let \(x,y\in\mathbb{R}^I\) be samples \(x[i] = f(t[i])\) and \(y[i] = g(t[i])\). Construct piecewise linear approximations (\cref{def-piecewise-linear-approximation}) \(\hat{f}\) of \(f\) and \(\hat{g}\) of \(g\). Then
\[
\|\hat{f} - \hat{g} \|_2^2 \leq \Delta t \|x-y\|_2^2.
\]
\end{lemma}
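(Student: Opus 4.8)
The plan is to exploit that on each grid cell the difference $\hat f - \hat g$ is affine, so the $L^2$ error splits cell-by-cell into a closed-form quadratic in the sampled differences $x[i]-y[i]$, which can then be summed and bounded by $\|x-y\|_2^2$.

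First I would set $d := x - y \in \mathbb{R}^I$ with entries $d_i = x[i] - y[i]$. On each subinterval $[t_i,t_{i+1}]$ both $\hat f$ and $\hat g$ are affine (\cref{def-piecewise-linear-approximation}), hence so is $\hat f - \hat g$; since this affine function takes the endpoint values $f(t_i)-g(t_i)=d_i$ and $f(t_{i+1})-g(t_{i+1})=d_{i+1}$, under the reparametrization $s=(t-t_i)/\Delta t\in[0,1]$ it equals $(1-s)d_i + s\,d_{i+1}$. In other words $\hat f - \hat g$ is precisely the piecewise linear approximation built from $d$. A change of variables on a single cell then gives
\[
  \int_{t_i}^{t_{i+1}} (\hat f - \hat g)^2\, dt
  = \Delta t \int_0^1 \bigl( (1-s)d_i + s\,d_{i+1} \bigr)^2\, ds
  = \frac{\Delta t}{3}\bigl( d_i^2 + d_i d_{i+1} + d_{i+1}^2 \bigr),
\]
using $\int_0^1(1-s)^2\,ds = \int_0^1 s^2\,ds = \tfrac13$ and $\int_0^1 s(1-s)\,ds = \tfrac16$. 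Summing over the $I-1$ cells,
\[
  \| \hat f - \hat g \|_2^2 = \frac{\Delta t}{3} \sum_{i=1}^{I-1} \bigl( d_i^2 + d_i d_{i+1} + d_{i+1}^2 \bigr).
\]

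The final step is to bound the right-hand sum by a multiple of $\|x-y\|_2^2 = \sum_{i=1}^I d_i^2$. I would control the cross terms with the elementary inequality $d_i d_{i+1}\le \tfrac12(d_i^2+d_{i+1}^2)$, and then regroup: each interior entry $d_i^2$ belongs to the two adjacent cells, while the two boundary entries $d_1^2$ and $d_I^2$ belong to only one cell, which lets the cell sum collapse back to a multiple of $\|d\|_2^2$. The main obstacle is precisely this bookkeeping — a careless bound silently over-counts the boundary entries and inflates the constant, so the argument must keep the endpoint corrections to land on the stated factor; everything preceding (the cell-wise affineness and the one-cell integral) is routine.
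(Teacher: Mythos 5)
Your cell-by-cell reduction is correct and, in fact, exact: on each cell $\hat f-\hat g$ is the affine interpolant of the endpoint differences $d_i=x[i]-y[i]$, and your computation gives the identity
\[
\|\hat f-\hat g\|_2^2=\frac{\Delta t}{3}\sum_{i=1}^{I-1}\bigl(d_i^2+d_id_{i+1}+d_{i+1}^2\bigr).
\]
The genuine gap is the final step, which you leave as ``bookkeeping that lands on the stated factor.'' It cannot land there. With $d_id_{i+1}\le\tfrac12(d_i^2+d_{i+1}^2)$ and the correct accounting of shared endpoints ($\sum_{i=1}^{I-1}(d_i^2+d_{i+1}^2)=2\|d\|_2^2-d_1^2-d_I^2$), your identity yields
\[
\|\hat f-\hat g\|_2^2\le\frac{\Delta t}{2}\bigl(2\|d\|_2^2-d_1^2-d_I^2\bigr)\le\Delta t\,\|d\|_2^2,
\]
i.e.\ the constant $1$, not $\tfrac23$. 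No regrouping can do better, because your exact identity already contradicts the target inequality: take $d_i\equiv1$ and $I\ge4$; then the left-hand side equals $(I-1)\Delta t$ while $\tfrac23\Delta t\,\|d\|_2^2=\tfrac23 I\Delta t$, and $I-1>\tfrac23 I$ once $I\ge4$. More generally, the quadratic form $\sum_i(d_i^2+d_id_{i+1}+d_{i+1}^2)$ has largest Rayleigh quotient approaching $3$ (attained asymptotically by constant $d$), so $\Delta t$ is the sharp constant obtainable from your decomposition; $\tfrac23\Delta t$ is below it.

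So the step that would fail is precisely the claim that endpoint bookkeeping recovers the factor $\tfrac23$. The natural per-cell route that produces a $\tfrac23$ is Cauchy--Schwarz, $\bigl((1-s)d_i+sd_{i+1}\bigr)^2\le\bigl((1-s)^2+s^2\bigr)(d_i^2+d_{i+1}^2)$, giving $\tfrac23\Delta t\,(d_i^2+d_{i+1}^2)$ per cell; but summing over cells double-counts every interior sample and delivers $\tfrac43\Delta t\,\|d\|_2^2$, not $\tfrac23\Delta t\,\|d\|_2^2$. Obtaining the stated constant would require the per-cell endpoint contributions to be disjoint across adjacent cells, which they are not. As written, your proposal cannot be completed to the stated inequality; the strongest conclusion your (correct) identity supports is $\|\hat f-\hat g\|_2^2\le\Delta t\,\|x-y\|_2^2$, and your own counterexample indicates that the factor $\tfrac23$ in the statement needs either a weaker constant or additional hypotheses on the samples.
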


\begin{proof}
See \cref{sec-piecewise-function-distance-proof}.
\end{proof}

We bound the distance between a Lipschitz
function and its piecewise linear approximation.

\begin{lemma}[Piecewise Linear Function
Approximation]\protect\hypertarget{lem-piecewise-function-approximation}{}\label[lemma]{lem-piecewise-function-approximation}
Let $f$ be an $\lipschitz{f}$-Lipschitz function on $[0, 1]$, and let $\hat{f}$ denote its piecewise linear approximation
(\cref{def-piecewise-linear-approximation}) on the uniform grid $t\in\mathbb{R}^I$. Then
\[
\|\hat{f} - f\|_2^2 \leq \tfrac{2}{15}  \Delta t^2 \lipschitz{f}^2.
\]

\end{lemma}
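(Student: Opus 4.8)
The plan is to bound the squared $L^2$ distance $\|\hat f - f\|_2^2 = \int_\ell^u (\hat f(t) - f(t))^2\,dt$ by splitting the integral over the $I-1$ grid subintervals $[t_i, t_{i+1}]$ and estimating each piece separately. On a single subinterval $[t_i,t_{i+1}]$ of length $\Delta t$, the function $\hat f$ is the secant line through $(t_i, f(t_i))$ and $(t_{i+1}, f(t_{i+1}))$. The key observation is that $\hat f$ and $f$ agree at both endpoints, so $\hat f(t) - f(t)$ vanishes at $t_i$ and $t_{i+1}$. Writing $t = t_i + \tau$ with $\tau\in[0,\Delta t]$ and $\lambda_2 = \tau/\Delta t$, $\lambda_1 = 1-\lambda_2$, the value $\hat f(t)$ is exactly the linear interpolation $\lambda_1 f(t_i) + \lambda_2 f(t_{i+1})$, so \cref{lem-lipschitz-interpolation} applies directly and gives the pointwise bound
\[
  |\hat f(t) - f(t)| \;\le\; 2\lipschitz{f}\,\lambda_1\lambda_2\,|t_{i+1} - t_i| \;=\; 2\lipschitz{f}\,\lambda_1\lambda_2\,\Delta t.
\]

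Next I would square this bound and integrate over the subinterval. Substituting $\lambda_2 = \tau/\Delta t$, we have $dt = \Delta t\, d\lambda_2$ and
\[
  \int_{t_i}^{t_{i+1}} (\hat f(t) - f(t))^2\,dt \;\le\; 4\lipschitz{f}^2 \Delta t^2 \cdot \Delta t \int_0^1 \lambda_2^2(1-\lambda_2)^2\,d\lambda_2.
\]
The Beta integral $\int_0^1 \lambda_2^2(1-\lambda_2)^2\,d\lambda_2 = B(3,3) = \frac{2!\,2!}{5!} = \frac{1}{30}$ is the one routine computation needed. Hence each subinterval contributes at most $4\lipschitz{f}^2\Delta t^3/30 = \frac{2}{15}\lipschitz{f}^2\Delta t^3$.

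Finally, summing over all $I-1$ subintervals gives
\[
  \|\hat f - f\|_2^2 \;\le\; (I-1)\cdot \tfrac{2}{15}\lipschitz{f}^2\Delta t^3 \;=\; \tfrac{2}{15}\lipschitz{f}^2 \Delta t^2 \big((I-1)\Delta t\big) \;=\; \tfrac{2}{15}(u-\ell)\Delta t^2 \lipschitz{f}^2,
\]
using $(I-1)\Delta t = u-\ell$. This is exactly the claimed bound. I do not anticipate a genuine obstacle here: the whole argument is a clean application of \cref{lem-lipschitz-interpolation} on each panel followed by a Beta-function integral, and the only care needed is getting the change of variables and the constant $1/30$ right. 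One could also present the pointwise bound using $\lambda_1\lambda_2 \le 1/4$ to get a cruder estimate, but that would lose the sharp constant $\frac{2}{15}$, so the exact integration is the right route.
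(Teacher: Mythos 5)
Your proposal is correct and follows essentially the same route as the paper's proof: split the $L^2$ error over the $I-1$ grid panels, apply \cref{lem-lipschitz-interpolation} pointwise on each panel where $\hat f$ is exactly the secant interpolant, and integrate $4\lipschitz{f}^2\Delta t^2\lambda^2(1-\lambda)^2$ to get the per-panel contribution $\tfrac{2}{15}\lipschitz{f}^2\Delta t^3$ before summing with $(I-1)\Delta t = u-\ell$. The change of variables and the value $\int_0^1\lambda^2(1-\lambda)^2\,d\lambda = \tfrac{1}{30}$ are handled correctly, so no gaps remain.
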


\begin{proof}
See \cref{sec-piecewise-function-approximation-proof}.
\end{proof}

\cref{lem-piecewise-function-approximation} compares favourably with
existing results. De Boor~\cite[Ch.~III,~Eq.~17]{de_boor_practical_2001} establishes the bound
\[
\|\hat{f} - f\|_2^2 \leq \tfrac{1}{16} \Delta t^4 \|f''\|_2^2,
\]
which converges faster as the interval width
\(\Delta t \to 0\) (equivalently, \(I\to\infty\)), but requires
\(f\in C^2([0, 1])\) with bounded second
derivative. Lipschitz continuity and second-order differentiability
doesn't suffice: for example, the parametrized soft-plus function $f(x)=\ln(1+\exp(cx))/c$ is $1$-Lipschitz for all $c>0$, but has unbounded second derivative $f''(0)=c/2$ as $c\to \infty$.

De Boor showed the error vanishes for continuous
functions (\emph{not} necessarily Lipschitz), at a slower
rate than \cref{lem-piecewise-function-approximation}~\cite[Ch.~III,~Eq.~18]{de_boor_practical_2001}. Kunoth et al.~\cite[Sec.~1.5, Th.~18]{kunoth_splines_2018} generalizes the approximation error to functions in Sobolev spaces, achieving for \emph{differentiable} Lipschitz functions a rate comparable to
\cref{lem-piecewise-function-approximation}:
\begin{equation}\label{eq-kunoth-approximation-error}
  \|\hat{f} - f\|_2^2 \leq K \Delta t^2\|f'\|_2^2
\end{equation}
for some constant \(K>0\).
We should not expect a better rate because
Lipschitz functions are differentiable almost everywhere~\cite{heinonen_lectures_2004}.
For Lipschitz functions, the derivative, where it exists, satisfies \(\left\lvert f'(x) \right\rvert\leq \lipschitz{f}\), giving
\(\left\lVert f'\right\rVert_2^2\leq  \lipschitz{f}^2\). Thus,
\cref{lem-piecewise-function-approximation} explicitly calculates the constant
\(K=2/15\) in \cref{eq-kunoth-approximation-error}.

\begin{theorem}[Continuous Problem Connection]\protect\hypertarget{thm-problem-connection}{}\label{thm-problem-connection}
Let \(f:[0, 1]\to\mathbb{R}\) be \(\lipschitz{f}\)-Lipschitz with discretization
\(x^*\in\mathbb{R}^I\). Let \(x\in\mathbb{R}^I\), and construct the corresponding piecewise linear function \(\hat{f}_x\) (\cref{def-piecewise-linear-approximation}). For \(\epsilon > 0\), if
\(
I > C \epsilon^{-1} + 1
\)
and
\(
\|x -x^* \|_2^2 < D \epsilon
\)
then
\[
  \|\hat{f}_x - f\|_2 < \epsilon,
\]
where \(C = \sqrt{\frac{8}{15}} \lipschitz{f}\) and
\(D = \lipschitz{f}/\sqrt{30}\).
\end{theorem}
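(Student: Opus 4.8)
The plan is to combine the triangle inequality with the two approximation lemmas already established, namely \cref{lem-piecewise-function-distance} and \cref{lem-piecewise-function-approximation}. Write $\hat f_{x^*}$ for the piecewise linear interpolant built from the exact samples $x^*$. By the triangle inequality,
\[
\|\hat f_x - f\|_2 \le \|\hat f_x - \hat f_{x^*}\|_2 + \|\hat f_{x^*} - f\|_2.
\]
The strategy is to force each of the two terms on the right to be strictly less than $\epsilon/2$, using one of the two hypotheses to control each term.

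First I would bound the second term. Since $x^*$ is the exact discretization of the $\lipschitz{f}$-Lipschitz function $f$, \cref{lem-piecewise-function-approximation} gives $\|\hat f_{x^*} - f\|_2^2 \le \tfrac{2}{15}(u-\ell)\Delta t^2 \lipschitz{f}^2$ with $\Delta t = (u-\ell)/(I-1)$. The hypothesis $I > C\epsilon^{-1}+1$ rearranges to $\Delta t = (u-\ell)/(I-1) < (u-\ell)\epsilon/C$, so $\|\hat f_{x^*}-f\|_2^2 < \tfrac{2}{15}(u-\ell)\cdot (u-\ell)^2\epsilon^2/C^2 \cdot \lipschitz{f}^2 = \tfrac{2}{15}(u-\ell)^3\lipschitz{f}^2 \epsilon^2/C^2$. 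Substituting $C^2 = \tfrac{8}{15}(u-\ell)^3\lipschitz{f}^2$ collapses this to $\|\hat f_{x^*}-f\|_2^2 < \tfrac14\epsilon^2$, i.e. $\|\hat f_{x^*}-f\|_2 < \epsilon/2$. Next, for the first term, apply \cref{lem-piecewise-function-distance} with the two sample vectors $x$ and $x^*$ (the lemma does not require the underlying functions to be related, only that the interpolants are built from the samples on the common grid): $\|\hat f_x - \hat f_{x^*}\|_2^2 \le \tfrac23 \Delta t \|x-x^*\|_2^2$. Using $\Delta t < (u-\ell)\epsilon/C$ and the hypothesis $\|x-x^*\|_2^2 < D\epsilon$ gives $\|\hat f_x - \hat f_{x^*}\|_2^2 < \tfrac23 \cdot (u-\ell)\epsilon/C \cdot D\epsilon = \tfrac23 (u-\ell) D \epsilon^2 / C$. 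It then remains to check that $\tfrac23 (u-\ell) D / C \le \tfrac14$, which is exactly the arithmetic identity behind the stated value $D = \sqrt{\tfrac{3}{40}}(u-\ell)^{1/2}\lipschitz{f}$ together with $C = \sqrt{\tfrac{8}{15}}(u-\ell)^{3/2}\lipschitz{f}$; indeed $\tfrac23(u-\ell)D/C = \tfrac23 \sqrt{\tfrac{3}{40}} / \sqrt{\tfrac{8}{15}} = \tfrac23 \sqrt{\tfrac{3\cdot 15}{40\cdot 8}} = \tfrac23\sqrt{\tfrac{45}{320}} = \tfrac23 \cdot \tfrac38 = \tfrac14$. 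Hence $\|\hat f_x - \hat f_{x^*}\|_2 < \epsilon/2$. Adding the two halves yields $\|\hat f_x - f\|_2 < \epsilon$.

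The only mild subtlety — and the place to be careful rather than the place of real difficulty — is the bookkeeping of the constants: one must verify that the particular closed forms of $C$ and $D$ in the statement are precisely what makes each bound hit $\epsilon^2/4$, and that the hypothesis on $I$ (a strict inequality with the "$+1$") converts cleanly into the needed strict bound on $\Delta t$. No delicate estimate is involved; the work is purely algebraic verification that $C^2 = \tfrac{8}{15}(u-\ell)^3\lipschitz{f}^2$ absorbs the constant from \cref{lem-piecewise-function-approximation} and that $\tfrac23(u-\ell)D/C = \tfrac14$ absorbs the constant from \cref{lem-piecewise-function-distance}. I would present the two term-by-term bounds as displayed inequalities and close with a one-line triangle-inequality summation.
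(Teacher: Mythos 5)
Your proposal is correct and follows essentially the same route as the paper: a triangle inequality through the exact-sample interpolant $\hat{f}_{x^*}$, with \cref{lem-piecewise-function-distance} controlling $\|\hat{f}_x-\hat{f}_{x^*}\|_2$ via the hypothesis on $\|x-x^*\|_2^2$ and \cref{lem-piecewise-function-approximation} controlling $\|\hat{f}_{x^*}-f\|_2$ via the hypothesis on $I$, each term landing below $\epsilon/2$ exactly as the constants $C$ and $D$ are calibrated to ensure. Your arithmetic verification of $C^2=\tfrac{8}{15}(u-\ell)^3\lipschitz{f}^2$ and $\tfrac{2}{3}(u-\ell)D/C=\tfrac14$ is accurate, so nothing further is needed.
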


\begin{proof}
See \cref{sec-problem-connection-proof}.
\end{proof}

\Cref{thm-problem-connection} shows that an approximate solution \(x\) to the discrete problem \cref{eq-scale-s-discretized-problem} constructs a
piecewise linear function \(\hat{f}_x\) that approximately
solves the continuous problem \cref{eq-abstract-continuous-problem},
provided the grid is sufficiently fine. The constant \(C\) grows with the
Lipschitz constant \(\lipschitz{f}\), which
reflects the expected behaviour that functions with greater variation require finer grids.

\section{Cost comparison with a single-scaled approach}\label{comparison-with-projected-gradient-descent}

\Cref{cor-multiscale-convergence} established that two versions
of the multiscale algorithm (lazy and greedy) converge to a solution of
\cref{eq-scale-s-discretized-problem}. We now establish the paper's central claim: this
convergence can be obtained at lower total cost, and with a tighter expected error bound, than solving~\cref{eq-scale-s-discretized-problem} at the finest scale \(s=1\) along.

\paragraph{Cost model} Throughout this section we assume that one update $x^{k+1}\gets U(x^k)$  at problem size $I$ costs $\Theta(I^p)$ for some fixed $p\ge1$. The linear case $p=1$ covers the standard cost for projected gradient descent on separable problems: one gradient evaluation and one box projection each cost $\Theta(I)$ or $\Theta(I\log I)$. Denser problems cost more: the least-squares gradient update $x\leftarrow x-A^\top(Ax-y)$ with $A$, an $I$-by-$I$ matrix, scales quadratically ($p=2$). The multiscale advantage established below only grows with $p$, so $p=1$ is the worst case for our method.

\Cref{def-iterate-convergence} provides iterate convergence for a $q$-linear update \(U\). Using this, we derive expected convergence bounds for three approaches: applying many iterations of \(U\) only at scale $s=1$ (\cref{thm-expected-pgd-convergence}), greedy multiscale descent (\cref{thm-expected-greedy-convergence}), and lazy multiscale descent (\cref{thm-expected-lazy-convergence}). We demonstrate that both multiscale variants achieve tighter error bounds with lower computational cost when the problem size is sufficiently large.

\begin{theorem}[Expected Single-scale
Convergence]\protect\hypertarget{thm-expected-pgd-convergence}{}\label{thm-expected-pgd-convergence}
Consider problem \cref{eq-scale-s-discretized-problem} with $I=2^S+1$ discretization points for some $S\ge1$, solved at the finest resolution (scale \(s=1\)). Assume
solutions $x_{1}^*\in\mathbb{R}^{I}$ are either normalized
\(\left\lVert x_{1}^* \right\rVert _2 = 1\) or centered
\(\left\lVert x_{1}^* \right\rVert _2 = 0\). Given initialization
\(x_{1}^0 \in \mathbb{R}^I\) with i.i.d.\@
standard normal entries \(x^0_{1}[{i}]\sim \mathcal{N}(0,1)\), performing $K$ iterations of \(U\) yields the expected error
\begin{equation}\phantomsection\label{eq-pgd-expected-error}{
  \mathbb{E} \left\lVert x_{1}^{K}-x_{1}^* \right\rVert\leq q^{K} \sqrt{ 2^S + 2}.
}\end{equation}
\end{theorem}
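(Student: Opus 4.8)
The plan is to reduce this probabilistic statement to the deterministic q-linear iterate bound together with a one-line second-moment computation for the Gaussian starting point, bridging the two with Jensen's inequality. Since projected gradient descent with the stepsize $\alpha = 2/(\smoothness{\tilde{\mathcal{L}}}+\convexity{\tilde{\mathcal{L}}})$ enjoys q-linear iterate convergence (\cref{lem-descent-lemma}), for the minimizer $x_1^*$ and any realization of the initialization $x_1^0$ we have
\[
  \norm{x_1^K - x_1^*}_2 \le (q)^K \norm{x_1^0 - x_1^*}_2 .
\]
Taking expectations over $x_1^0$ and using concavity of $t\mapsto\sqrt{t}$ with Jensen's inequality gives
\[
  \mathbb{E}\norm{x_1^K - x_1^*}_2 \le (q)^K\,\mathbb{E}\norm{x_1^0 - x_1^*}_2 \le (q)^K\sqrt{\mathbb{E}\norm{x_1^0 - x_1^*}_2^2}.
\]

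Next I would compute the second moment by expanding $\mathbb{E}\norm{x_1^0 - x_1^*}_2^2 = \mathbb{E}\norm{x_1^0}_2^2 - 2\,\langle \mathbb{E} x_1^0,\, x_1^*\rangle + \norm{x_1^*}_2^2$. The key observation is that $x_1^*$ is deterministic: since $\tilde{\mathcal{L}}$ is strongly convex and $\tilde{\mathcal{C}}$ closed and convex, the minimizer is unique and hence independent of the random start, so the cross term is $\langle 0, x_1^*\rangle = 0$. Because the entries $x_1^0[i]$ are i.i.d.\ $\mathcal{N}(0,1)$, we have $\mathbb{E}\norm{x_1^0}_2^2 = I = 2^S+1$; and by hypothesis $\norm{x_1^*}_2^2\in\{0,1\}$, so $\norm{x_1^*}_2^2 \le 1$. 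Combining, $\mathbb{E}\norm{x_1^0 - x_1^*}_2^2 \le 2^S+2$, and substituting into the previous display yields the claimed bound \cref{eq-pgd-expected-error}.

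The only delicate point is the handling of the cross term, which vanishes precisely because strong convexity pins down a single minimizer, so that $x_1^*$ does not depend on $x_1^0$. If one only had the guarantee of \cref{def-iterate-convergence} with a minimizer that may depend on the initialization, the argument would instead fall back on the triangle inequality $\norm{x_1^0 - x_1^*}_2 \le \norm{x_1^0}_2 + \norm{x_1^*}_2$, producing the weaker constant $\sqrt{2^S+1}+1$ rather than $\sqrt{2^S+2}$. Everything else is a routine application of Jensen's inequality and the known moments of a standard Gaussian vector.
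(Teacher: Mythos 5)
Your argument is correct and matches the paper's route: apply the $q$-linear iterate bound pathwise, pass the expectation inside via Jensen, and compute $\mathbb{E}\norm{x_1^0-x_1^*}_2^2 = I + \norm{x_1^*}_2^2 \le 2^S+2$ using $\mathbb{E}x_1^0=0$ and the (deterministic, by strong convexity) normalized or centered minimizer, which is exactly how the constant $\sqrt{2^S+2}=\sqrt{I+1}$ arises in the paper. Your closing remark about the cross term is the right delicate point, and your handling of it is consistent with the paper's setting.
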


\begin{corollary}[Expected Number of
Iterations]\protect\hypertarget{cor-expected-pgd-convergence}{}\label[corollary]{cor-expected-pgd-convergence}
If we iterate $U(x)$ \(K\) times, where \(
K \geq \left(\log(1/\epsilon)+{\log(2^S + 2)}/{2}\right)/(-\log q),
\)
then the expected error is bounded as
\(
\mathbb{E} \left\lVert x_{1}^{k} -x_{1}^* \right\rVert_2 \leq \epsilon.
\)

\end{corollary}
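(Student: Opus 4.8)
The plan is to derive the corollary directly from \cref{thm-expected-pgd-convergence} by a short logarithmic manipulation; there is no real machinery needed beyond \cref{eq-pgd-expected-error}. Starting from the expected error bound $\mathbb{E}\norm{x_1^K - x_1^*} \leq (q)^K \sqrt{2^S+2}$, it suffices to find a condition on $K$ that forces the right-hand side below $\epsilon$, since then the triangle of implications $K$ large $\Rightarrow (q)^K\sqrt{2^S+2}\le\epsilon \Rightarrow \mathbb{E}\norm{x_1^K-x_1^*}\le\epsilon$ closes the argument.

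First I would impose $(q)^K \sqrt{2^S+2} \leq \epsilon$, equivalently $(q)^K \leq \epsilon / \sqrt{2^S+2}$. Taking logarithms of both sides — both are positive when $q>0$; the case $q=0$ is immediate because then $(q)^K = 0$ for every $K\ge1$ — gives $K\log q \leq \log\epsilon - \tfrac{1}{2}\log(2^S+2)$. Because $q\in[0,1)$ we have $\log q < 0$, so dividing by $\log q$ reverses the inequality and produces exactly the stated threshold $K \geq \bigl(\log(1/\epsilon) + \tfrac{1}{2}\log(2^S+2)\bigr)\big/(-\log q)$, after rewriting $-\log\epsilon = \log(1/\epsilon)$.

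Finally I would note that the corollary's hypothesis is a lower bound on $K$, not an equality, so I should record that $(q)^K$ is nonincreasing in $K$ (again since $q\in[0,1)$); hence once $K$ meets the threshold the inequality $(q)^K\sqrt{2^S+2}\le\epsilon$ persists for all larger $K$, and combining with \cref{eq-pgd-expected-error} yields $\mathbb{E}\norm{x_1^{k}-x_1^*}_2 \leq \epsilon$ as claimed. The only steps warranting a sentence of care are the sign flip when dividing by $\log q < 0$ and the degenerate case $q=0$; neither is a genuine obstacle.
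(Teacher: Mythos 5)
Your proposal is correct and follows essentially the same route the paper takes: rearrange the bound \(\mathbb{E}\lVert x_1^K - x_1^*\rVert_2 \le (q)^K\sqrt{2^S+2}\) from \cref{thm-expected-pgd-convergence} by taking logarithms, using \(\log q < 0\) to flip the inequality, and noting monotonicity of \((q)^K\) in \(K\). The attention to the sign flip and the degenerate case \(q=0\) is appropriate but does not change the argument.
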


\begin{proof}
See \cref{sec-expected-pgd-convergence-proof}.
\end{proof}

\begin{theorem}[Expected Greedy Multiscale
Convergence]\protect\hypertarget{thm-expected-greedy-convergence}{}\label{thm-expected-greedy-convergence}
Assume the same setting as
\cref{thm-expected-pgd-convergence}, but use the greedy
multiscale descent method starting at scale \(s = S\).
Performing \(K_s\) iterations at each scale yields the expected final
error
\begin{equation}\phantomsection\label{eq-greedy-multiscale-expected-error}{
  \mathbb{E}\, \| x_{1}^{K_1} -x_{1}^* \|_2
  \leq
  q^{K_{1}}\sqrt{ 2 ^{S+1}}\left(q^{r_S}+\frac{\lipschitz{f^*}}{{2^{S+2}}}\sum\nolimits_{s=1}^{S-1}  2 ^{s} q^{r_s} \right)
}\end{equation}
where \(r_s = \sum_{t=2}^sK_{t}\) with the base case \(r_1 = 0\).
\end{theorem}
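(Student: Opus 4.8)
The plan is to derive this estimate from the deterministic greedy error bound \cref{thm-greedy-multiscale-descent-error}: first establish the claimed bound on the expected coarsest-scale error, then substitute it into \cref{thm-greedy-multiscale-descent-error} after reindexing the cumulative iteration counts. The two displayed claims in the statement correspond exactly to these two steps, and nothing beyond \cref{thm-greedy-multiscale-descent-error} and an elementary Gaussian moment computation is needed.

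\emph{Coarsest-scale bound.} Under the dyadic-coarsening chain with $I_1 = 2^S + 1$ points at the finest scale (inherited from \cref{thm-expected-pgd-convergence}), each coarsening halves the interior spacing, so the coarsest scale $s = S$ carries $I_S = 3$ points. Hence $x_S^0 \in \mathbb{R}^3$ has i.i.d.\ $\mathcal{N}(0,1)$ entries, while $x_S^*$ --- the exact discretization at scale $S$ of the fixed continuous solution $f^*$ --- is deterministic with $\norm{x_S^*}_2 \le 1$ by hypothesis. Jensen's inequality gives $\mathbb{E}\norm{x_S^0 - x_S^*}_2 \le \big(\mathbb{E}\norm{x_S^0 - x_S^*}_2^2\big)^{1/2}$, and expanding the square with $\mathbb{E}x_S^0 = 0$ and $\mathbb{E}\norm{x_S^0}_2^2 = 3$ leaves $\big(3 + \norm{x_S^*}_2^2\big)^{1/2} \le 2$.

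\emph{Assembling the final bound.} \Cref{thm-greedy-multiscale-descent-error} states
\[
\norm{x_1^{K_1} - x_1^*}_2 \le \sqrt{2^{S-1}}\,(q)^{\rho_S}\norm{x_S^0 - x_S^*}_2 + \lipschitz{f^*}\tfrac{|u-\ell|}{2\sqrt{2^{S+1}}}\sum_{s=1}^{S-1}2^s\,(q)^{\rho_s},
\]
where $\rho_s = \sum_{t=1}^{s}K_t$. Writing $\rho_s = K_1 + r_s$ with $r_s = \sum_{t=2}^{s}K_t$ the cumulative count of the present statement (so $r_1 = 0$), and factoring $(q)^{K_1}$ out of every power of $q$, yields $\norm{x_1^{K_1}-x_1^*}_2 \le (q)^{K_1}\big(\sqrt{2^{S-1}}(q)^{r_S}\norm{x_S^0 - x_S^*}_2 + \lipschitz{f^*}\tfrac{|u-\ell|}{2\sqrt{2^{S+1}}}\sum_{s=1}^{S-1}2^s(q)^{r_s}\big)$. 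Taking expectations affects only the first term, since the interpolation sum involves solely the deterministic quantities $\lipschitz{f^*}$ and $|u-\ell|$; inserting $\mathbb{E}\norm{x_S^0 - x_S^*}_2 \le 2$ and using $2\sqrt{2^{S-1}} = \sqrt{2^{S+1}}$ together with $\tfrac{1}{2\sqrt{2^{S+1}}} = \tfrac{\sqrt{2^{S+1}}}{2^{S+2}}$ collapses the expression into the claimed bound.

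The genuinely substantive work --- chaining the iterate-convergence hypothesis \cref{def-iterate-convergence} with the inexact-interpolation lemma \cref{lem-inexact-interpolation} across all scales --- is already carried out inside \cref{thm-greedy-multiscale-descent-error}. What remains here is only bookkeeping: tracking how the powers of two recombine under the reindexing of the cumulative counts, and observing that expectation distributes term-by-term because the interpolation errors carry no randomness. One should also recall, exactly as in the deterministic theorem, that each $x_s^*$ is taken to be the exact discretization of $f^*$, which is what makes \cref{lem-inexact-interpolation} applicable between consecutive scales; this needs no separate justification.
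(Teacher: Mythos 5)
Your proposal is correct and takes essentially the same route the paper does: bound the expected coarsest-scale error via Jensen (three i.i.d.\ Gaussian entries plus $\norm{x_S^*}_2\le 1$ gives $\sqrt{3+1}=2$), substitute into \cref{thm-greedy-multiscale-descent-error}, factor out $(q)^{K_1}$, and reindex the cumulative iteration counts, with the power-of-two algebra ($2\sqrt{2^{S-1}}=\sqrt{2^{S+1}}$, $1/(2\sqrt{2^{S+1}})=\sqrt{2^{S+1}}/2^{S+2}$) checking out. The only gloss is calling $\norm{x_S^*}_2\le 1$ a hypothesis --- the normalization is stated for $x_1^*$, and the coarse bound follows only because dyadic coarsening subsamples $x_1^*$, so $\norm{x_S^*}_2\le\norm{x_1^*}_2\le 1$; that deserves one explicit line.
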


\begin{proof}
The result follows from \cref{thm-expected-pgd-convergence,thm-greedy-multiscale-descent-error} with the initial error bound \(\mathbb{E}\left[ \left\lVert x_{S}^{0} -x_{S}^* \right\rVert \right]\leq q^0 \sqrt{2^1 + 2} = 2\) starting at the coarsest scale with \(I=3\) points.
\end{proof}

Determining the iteration count required to achieve a specified accuracy is less direct for multiscale methods than for single-scaled ones. Moreover, a fair comparison requires accounting for computational cost: an iteration at coarse scale~$S$ requires fewer floating-point operations than an iteration at the finest scale $s=1$. We therefore configure each iteration of multiscale to simultaneously achieve lower total cost than fine-scale-only optimization while obtaining a tighter expected error bound. Specifically, we require \cref{eq-greedy-multiscale-expected-error} to be smaller than \cref{eq-pgd-expected-error}. This requires costing multiscale in terms of one iteration of \(U\) at the finest scale $s=1$.

\begin{lemma}[Cost of Greedy
Multiscale]\protect\hypertarget{lem-cost-of-gd-vs-ms}{}\label[lemma]{lem-cost-of-gd-vs-ms}
Suppose performing \(U\) with \(K\)
iterations at the finest scale \(s=1\) has total cost
\(C_{\mathrm{GD}}=CK\), where \(C>0\) is the cost per iteration at that scale.
Assume the cost \(C\) of one iteration of \(U\) scales
polynomially \(C = \Theta(I^p)\) with the problem size \(I\) for some constant \(p\geq 1\).
Then greedy multiscale descent with $K_s$ iterations at scale $s$ has total cost \(C_{\mathrm{GM}}\) that satisfies
\(
C_{\mathrm{GM}} \leq C\sum_{s=1}^S  K_{s} \left( \frac{3}{5} \right)^{s-1}.
\)

With \(K_s=1\) at every scale except the finest, where \(K_1 = K-2\), \(C_{\mathrm{GM}} < C_{\mathrm{GD}} \).
\end{lemma}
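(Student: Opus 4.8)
The plan is to write the total greedy-multiscale cost as a $K_s$-weighted sum of per-iteration costs, show that these per-iteration costs decay geometrically in $s$ with ratio at most $3/5$, and then sum. Since performing $K_s$ projected gradient descent iterations at scale $s$ costs $K_s C_s$, I would start from $C_{\mathrm{GM}} = \sum_{s=1}^{S} K_s C_s$ with $C_s = C(I_s)^p$ and $I_s = 2^{S-s+1}+1$. At the finest scale $s=1$ one has $I_1 = 2^{S}+1$, and since $A$ is by hypothesis the cost of one iteration at that scale, $A = C_1 = C(2^{S}+1)^p$.

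The heart of the argument is a bound on the ratio of adjacent per-iteration costs. For $2 \le s \le S$, set $k = S-s+1 \ge 1$, so that
\[
  \frac{C_s}{C_{s-1}} = \left( \frac{I_s}{I_{s-1}} \right)^{p} = \left( \frac{2^{k}+1}{2^{k+1}+1} \right)^{p}.
\]
Because $k \ge 1$ gives $2^{k} \ge 2$, a one-line computation shows $5(2^{k}+1) = 5\cdot 2^{k}+5 \le 6\cdot 2^{k}+3 = 3(2^{k+1}+1)$, hence $(2^{k}+1)/(2^{k+1}+1) \le 3/5$; since $0 < 3/5 < 1$ and $p \ge 1$, this yields $C_s/C_{s-1} \le (3/5)^p \le 3/5$. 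Telescoping down from $s=1$ then gives $C_s \le C_1 (3/5)^{s-1} = A (3/5)^{s-1}$ for each $s$, and substituting into $C_{\mathrm{GM}} = \sum_{s=1}^{S} K_s C_s$ (using $K_s \ge 0$) delivers $C_{\mathrm{GM}} \le A \sum_{s=1}^{S} K_s (3/5)^{s-1}$.

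I expect the only delicate point to be setting up the right comparison --- bounding costs at \emph{adjacent} scales, rather than comparing each scale directly to the finest --- together with the elementary ratio inequality $(2^{k}+1)/(2^{k+1}+1) \le 3/5$ for $k \ge 1$. It is also worth noting in the write-up that the largest adjacent ratio occurs between the two coarsest grids, where $I_S = 3$ and $I_{S-1} = 5$, so $3/5$ is precisely the geometric decay rate forced by dyadic coarsening and cannot be improved without further assumptions on $p$ or $S$.
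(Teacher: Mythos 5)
Your proof is correct, and it follows essentially the same route the paper takes: write $C_{\mathrm{GM}}=\sum_{s}K_sC_s$, identify $A=C_1=C(2^S+1)^p$, bound each adjacent per-iteration cost ratio by the worst case $I_S/I_{S-1}=3/5$ (using $p\ge 1$), and telescope to get $C_s\le A(3/5)^{s-1}$. No gaps; the auxiliary inequality $(2^k+1)/(2^{k+1}+1)\le 3/5$ for $k\ge1$ is verified correctly.
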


\begin{proof}
See \cref{sec-cost-of-gd-vs-ms-proof}.
\end{proof}

The analysis in \cref{lem-cost-of-gd-vs-ms} ignores the cost of
interpolation and memory allocation when changing iterate dimensions between scales. These costs become negligible as the
iteration counts increase. Empirical results in \cref{fig-loss-convergence-in-time,sec-numerical-experiments} confirm that greedy multiscale
remains cheaper in practice.

Having established conditions that ensure greedy multiscale is cheaper than the single-scaled approach, we now establish conditions that ensure tighter expected error bounds.

\begin{corollary}[Sufficient Conditions for Greedy Multiscale to be
Better]\protect\hypertarget{cor-greedy-cost}{}\label[corollary]{cor-greedy-cost}
Assume the base algorithm has iterate convergence (\cref{def-iterate-convergence}) with rate \(q\in (0,1/2)\).
Assume the finest scale problem has \(I = 2^{S} + 1\) points, where the number of scales \(S\) satisfies
\[
S \geq \max\left\{4,\, \log_2 \left( \frac{\lipschitz{f^*}}{\sqrt{2}q^2(1-2q)(1-\sqrt{2}q)}\right)\right\}.
\]
Then greedy multiscale with one iteration \(K_s=1\) at
each scale except the finest (where \(K_1 = K - 2\))
simultaneously achieves lower total cost and a tighter expected final error bound than performing \(K\) iterations of \(U\) at the finest scale.
\end{corollary}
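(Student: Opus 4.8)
The plan is to instantiate \cref{thm-expected-greedy-convergence,thm-expected-pgd-convergence} with the specific iteration schedule $K_s=1$ for $s=2,\dots,S$ and $K_1=K-2$, and then to combine the resulting error bound with the cost bound already established in \cref{cor-cost-of-gd-vs-ms}. Under this schedule the cumulative counts in \cref{thm-expected-greedy-convergence} are $r_s=s-1$ for $s\ge 2$ and $r_1=0$, so in particular $r_S=S-1$. The cost half of the statement is then immediate: this schedule is exactly the second iteration plan in \cref{cor-cost-of-gd-vs-ms}, which already certifies that greedy multiscale costs less than $K$ iterations of projected gradient descent at the finest scale. Hence the substance of the proof is the error comparison.

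For the error, I would substitute the schedule into \cref{eq-greedy-multiscale-expected-error} and bound the geometric sum appearing there,
\[
\sum_{s=1}^{S-1}2^{s}(q)^{s-1}=2\sum_{j=0}^{S-2}(2q)^{j}<\frac{2}{1-2q},
\]
where the strict inequality uses $q\in(0,1/2)$. This yields
\[
\mathbb{E}\,\norm{x_1^{K_1}-x_1^*}_2<(q)^{K-2}\sqrt{2^{S+1}}\left((q)^{S-1}+\frac{\lipschitz{f^*}|u-\ell|}{2^{S+1}(1-2q)}\right).
\]
Comparing against the projected-gradient bound $(q)^{K}\sqrt{2^{S}+2}$ of \cref{eq-pgd-expected-error}, and using $\sqrt{2^{S}+2}>2^{S/2}$, it suffices to show the right-hand side above is at most $(q)^{K}2^{S/2}$. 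Dividing through by $(q)^{K}2^{S/2}$ and collapsing the powers of two via $\sqrt{2^{S+1}}=\sqrt{2}\,2^{S/2}$, this reduces to the single scalar inequality
\[
\sqrt{2}\,(q)^{S-3}+\frac{\lipschitz{f^*}|u-\ell|}{\sqrt{2}\,(q)^{2}\,2^{S}(1-2q)}\le 1.
\]

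It then remains to bound the two summands, and here the two clauses of the hypothesis on $S$ act on the two terms respectively. Since $S\ge 4$ we have $S-3\ge 1$, hence $(q)^{S-3}\le q$ and the first term is at most $\sqrt{2}q$. Since $S\ge\log_2\bigl(\lipschitz{f^*}|u-\ell|/(\sqrt{2}(q)^{2}(1-2q)(1-\sqrt{2}q))\bigr)$, exponentiating gives $2^{S}\ge\lipschitz{f^*}|u-\ell|/(\sqrt{2}(q)^{2}(1-2q)(1-\sqrt{2}q))$, which rearranges to say exactly that the second term is at most $1-\sqrt{2}q$ (and $1-\sqrt{2}q>0$ because $q<1/2<1/\sqrt{2}$). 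Adding, the scalar inequality holds, so the greedy bound is at most $(q)^{K}2^{S/2}<(q)^{K}\sqrt{2^{S}+2}$, with strict inequality. Together with the cost claim this proves the corollary. I do not expect a genuine obstacle: the only delicate point is bookkeeping — tracking the exponents of $2$ and of $q$ through the division, and observing that the threshold on $S$ was reverse-engineered so that the constant term consumes $1-\sqrt{2}q$ of the available budget while the $S\ge 4$ clause guarantees the leftover $\sqrt{2}q$ covers the first term.
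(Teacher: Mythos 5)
Your proposal is correct and follows essentially the same route as the paper's proof: the cost claim is exactly the second iteration plan of \cref{cor-cost-of-gd-vs-ms}, and the error claim comes from substituting $K_1=K-2$, $r_s=s-1$ into \cref{eq-greedy-multiscale-expected-error}, bounding $\sum_{s=1}^{S-1}2^s q^{s-1}$ by $2/(1-2q)$ via $q<\tfrac12$, and comparing against $(q)^K\sqrt{2^S+2}\ge (q)^K 2^{S/2}$, with the two clauses of the hypothesis on $S$ covering the budget split $\sqrt{2}q+(1-\sqrt{2}q)=1$. Your bookkeeping of the powers of $2$ and $q$ checks out, so no gap remains.
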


\begin{proof}
See \cref{sec-greedy-cost-proof}.
\end{proof}

We establish analogous results for lazy multiscale.

\begin{lemma}[Cost of Lazy
Multiscale]\protect\hypertarget{lem-pgd-vs-lazy-multiscale}{}\label[lemma]{lem-pgd-vs-lazy-multiscale}
Suppose the same set up as \cref{lem-cost-of-gd-vs-ms}.
Lazy multiscale descent with $K_s$ iterations at scale $s$ has a total cost $C_{\mathrm{LM}}$ satisfying
\[
C_{\mathrm{LM}} \leq C \left(\textstyle\sum_{s=1}^{S-1} 2^{-s}K_{s} + 2^{-S}3K_{S}\right).
\]
With \(K_s = \left\lceil \frac{4}{5} K \right\rceil - 1\) iterations at each
scale,
\(
C_{\mathrm{LM}} < C_{\mathrm{GD}}.
\)

\end{lemma}

\begin{proof}
See \cref{sec-cost-of-gd-vs-lazy-ms-proof}
\end{proof}

\begin{theorem}[Expected Lazy Multiscale
Convergence]\protect\hypertarget{thm-expected-lazy-convergence}{}\label{thm-expected-lazy-convergence}
Assume the same setting as \cref{thm-expected-pgd-convergence},
but use the lazy multiscale starting at
coarsest scale \(S\) with \(I=2^S + 1\)
points at the finest scale.
Performing constant $K_s=K$ iterations at each scale yields the expected final error
\begin{equation}\label{eq-lazy-multiscale-error-bound}
\begin{aligned}
&\mathbb{E} \left\lVert x_{1}^{K} -x_{1}^* \right\rVert_2  \\
&\leq  d(K)\left( 2(d(K)+1)^{S-1} + \frac{\lipschitz{f^*}}{2} \frac{ \sqrt{2^S} \left( d(K) + 1 \right)^S - \sqrt{2} \left( d(K) + 1 \right)}{\sqrt{2^S} \left( d(K) + 1 \right) \left( \sqrt{2} d(K) + \sqrt{2} - 1 \right)} \right),
\end{aligned}
\end{equation}
where \(d(K)=q^K\) and the solution function \(f^*\) is
\(\lipschitz{f^*}\)-Lipschitz on \([0, 1]\).
\end{theorem}

\begin{proof}
The proof is similar to \cref{thm-expected-greedy-convergence} by using \cref{thm-lazy-multiscale-descent-error}.
\end{proof}

We establish sufficient conditions that ensure lazy multiscale achieves both lower cost and tighter error bounds than the single-scaled approach.

\begin{corollary}[Sufficient Conditions for Lazy Multiscale to be
Better]\protect\hypertarget{cor-lazy-cost}{}\label[corollary]{cor-lazy-cost}
Assume \(U\) has iterate convergence (\cref{def-iterate-convergence}) with rate
\(q\in (0,1)\).
Assume the single-scaled approach runs for
\(K>5(\log_q(\sqrt{2}-1)-1)/4\) iterations, and lazy multiscale runs for
\(K'=\lceil 4K/5 \rceil -1\) iterations at every scale. If the finest scale has
at least \(I=2^S+1\) points where
\[
S\geq {\log\left( \frac{q^{-K/5-1}}{1+q^{K'}}\left( 2+\frac{\lipschitz{f^*}}{2(\sqrt{ 2 }(1+q^{K'}) -1)} \right) \right)} \Big/ {\log\left( \frac{\sqrt{ 2 }}{1+q^{K'}} \right)},
\]
then lazy multiscale is cheaper and achieves a tighter expected error bounds than the single-scaled approach.
\end{corollary}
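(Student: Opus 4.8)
The corollary bundles two claims: that lazy multiscale is cheaper than projected gradient descent, and that it attains a tighter expected error bound. The cost claim is immediate — with $K' = \lceil 4K/5\rceil - 1$ iterations at every scale, \cref{lem-pgd-vs-lazy-multiscale} already gives $C_{\mathrm{LM}} < C_{\mathrm{GD}}$ — so the whole argument is about the error bound. The plan is to take the lazy bound of \cref{thm-expected-lazy-convergence}, evaluated with the constant count $K_s = K'$ (so its ``$d(K)$'' equals $(q)^{K'}$), and show it does not exceed the projected-gradient bound $(q)^K\sqrt{2^S+2}$ of \cref{thm-expected-pgd-convergence}.

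First I would collapse the lazy bound into a compact form. Write $d := (q)^{K'}\in(0,1)$; note $\sqrt 2(1+d)-1 > 0$ automatically since $d>0$, so all denominators involved are positive. In the bracketed term of \cref{thm-expected-lazy-convergence}, discard the subtracted piece $-\sqrt 2(d+1)$ in the numerator of the fraction (legitimate since it is positive, and it only enlarges the bound), then cancel $\sqrt{2^S}$ and one factor $(d+1)$, which leaves $(d+1)^{S-1}/(\sqrt 2 d + \sqrt 2 - 1)$; pulling $(d+1)^{S-1}$ out of both summands of the bracket gives
\[
\mathbb{E}\,\norm{x_1^{K'} - x_1^*}_2 \ \le\ d\,(d+1)^{S-1}\,B,\qquad B := 2 + \frac{\lipschitz{f^*}\,|u-\ell|}{2\bigl(\sqrt 2(1+d)-1\bigr)}.
\]
Weakening the projected-gradient bound to $(q)^K\sqrt{2^S+2}\ge (q)^K(\sqrt 2)^S$, it suffices to prove $d(d+1)^{S-1}B\le (q)^K(\sqrt 2)^S$, i.e., after dividing through by $(\sqrt 2)^{S-1}dB$,
\[
\left(\frac{d+1}{\sqrt 2}\right)^{S-1}\ \le\ \frac{(q)^K\sqrt 2}{d\,B}.
\]

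This is where the lower bound on $K$ is used: it is chosen precisely so that $K' = \lceil 4K/5\rceil - 1$ is large enough that $d = (q)^{K'} < \sqrt 2 - 1$, hence $d+1 < \sqrt 2$ and $\log\bigl((d+1)/\sqrt 2\bigr) < 0$. Taking logarithms in the last display and dividing by this negative quantity (which reverses the inequality), then adding $1 = \log(\sqrt 2/(1+d))/\log(\sqrt 2/(1+d))$ to both sides and merging logs, turns the requirement into the cleaner condition
\[
S\ \ge\ \frac{\log\bigl(dB/((1+d)(q)^K)\bigr)}{\log\bigl(\sqrt 2/(1+d)\bigr)}.
\]
Finally, because $K' - K \ge \tfrac45 K - 1 - K = -\tfrac15 K - 1$ and $q<1$, we have $d/(q)^K = (q)^{K'-K}\le (q)^{-K/5-1}$; since the denominator $\log(\sqrt 2/(1+d))$ is positive, replacing $d/(q)^K$ by this upper bound shows that the hypothesized inequality on $S$ — which is exactly the right-hand side above with $d/(q)^K$ replaced by $(q)^{-K/5-1}$ and $d = (q)^{K'}$ — implies the displayed one. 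Unwinding the chain of implications yields $\mathbb{E}\norm{x_1^{K'}-x_1^*}_2 \le (q)^K\sqrt{2^S+2}$, the projected-gradient bound, proving the error claim.

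The difficulty is bookkeeping rather than any isolated deep idea. Three places call for care: (i) the simplification of the geometric-series-like fraction in \cref{thm-expected-lazy-convergence} down to $d(d+1)^{S-1}B$ without discarding so much that the target inequality becomes unprovable; (ii) confirming $q^{K'} < \sqrt 2 - 1$, which is the sole purpose of the lower bound on $K$ and is what makes $\log(\sqrt 2/(1+d))$ positive; and (iii) carrying the correct inequality direction through a division by a negative logarithm and through the substitution $(q)^{K'-K}\le (q)^{-K/5-1}$. The sign of the logarithm is the single easiest thing to get backwards, so I would sanity-check it against a numerical example (e.g.\ $q = 1/4$) before finalizing.
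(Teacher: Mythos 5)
Your proposal is correct and follows essentially the same route the paper takes: cost directly from \cref{lem-pgd-vs-lazy-multiscale}, collapsing the bound of \cref{thm-expected-lazy-convergence} to $d(d+1)^{S-1}B$ with $d=q^{K'}$, comparing against the weakened bound $(q)^K(\sqrt{2})^S$ from \cref{thm-expected-pgd-convergence}, taking logarithms with $\log(\sqrt{2}/(1+d))>0$, and substituting $q^{K'-K}\le q^{-K/5-1}$, which reproduces the stated threshold on $S$ exactly. The one step you flag but do not verify, item (ii), is the only soft spot: as literally written, $K>5(\log_q(\sqrt{2}-1)-1)/4$ only yields $K'>\log_q(\sqrt{2}-1)-2$, so the required inequality $q^{K'}<\sqrt{2}-1$ rests on the paper's intended condition $K'>\log_q(\sqrt{2}-1)$ (cf.\ the discussion following the corollary) rather than on the displayed constant — a discrepancy inherited from the statement itself, not a divergence in your approach.
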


\begin{proof}
See \cref{sec-lazy-cost-proof}.
\end{proof}

\Cref{cor-greedy-cost,cor-lazy-cost} reveal several insights. Both
multiscale variants require a minimum number of scales \(S\) (equivalently a minimum sized problem \(I\)) for the multiscale optimization to outperform single scale methods.
\Cref{thm-problem-connection} establishes that arbitrarily small error between continuous and discretized solutions requires arbitrarily large problem size $I$. Both corollaries show that the required $I$ increases with the Lipschitz constant $\lipschitz{f^*}$, consistent with \cref{thm-problem-connection}. A subtle requirement
is that the base algorithm cannot have iterative convergence with $q=0$, which would enable solving the discretized problem in one iteration, and thus eliminate the benefit of multiscale.

\Cref{cor-greedy-cost,cor-lazy-cost} differ in their constraints on
the convergence rate $q$. Greedy multiscale with one iteration per scale requires convergence rate $q\in (0,1/2)$ because interpolation error must remain smaller than the algorithmic progress at each scale. Lazy multiscale, on the other hand, allows $q\in (0,1)$ but requires a minimum of $K'>\log_q(\sqrt{2}-1)$ iterations at each scale to sufficiently control error accumulation. This is a mild requirement: two iterations per scale suffice for lazy multiscale to improve on the base algorithm when $0<q<(\sqrt{2} -1)^{1/2}\approx 0.6436$, and both algorithms require arbitrarily many iterations to drive the final error to zero.

\section{Numerical experiments and
benchmarks}\label{sec-numerical-experiments}

The experiments serve two purposes. First, they confirm the cost predictions of \cref{comparison-with-projected-gradient-descent} on the controlled one-dimensional motivating example of \cref{sec-motivating-example}, revisited in \cref{sec-motivating-example-numerics}. Second, they show that the one-dimensional setting of the theory is not a practical restriction. \Cref{the-optimization-problem} formulates density demixing---recovering mixtures of continuous probability densities from noisy samples---as a constrained Tucker-1 tensor factorization, and \cref{sec-synthetic-probability-unmixing,sec-geological-probability-unmixing} apply the greedy multiscale method to synthetic and real geological instances. The synthetic experiments also test the one-iteration-per-coarse-scale prescription of \cref{cor-greedy-cost} by varying the number of coarse-scale iterations.

The projected-gradient multiscale and single-scale algorithms are implemented in the Julia package \texttt{BlockTensorFactorization.jl}~\cite{Richardson_BlockTensorFactorization_jl}: the function \texttt{factorize} implements projected gradient descent for the Tucker-1 decomposition problem at a single scale, and \texttt{multiscale\_factorize} implements the greedy multiscale method (\cref{alg-greedy-multiscale}).
All experiments ran on an Intel Core i7-1185G7 with 32GB of RAM, without parallelization.

\subsection{Motivating example: numerics} \label{sec-motivating-example-numerics}

We first compare an instance of the greedy multiscale method
(\cref{alg-greedy-multiscale}) against single-scale optimization on the
motivating example from \cref{sec-motivating-example}. Full experimental
details are provided in \cref{sec-motivating-example-details}.

\begin{figure}[t]
  \centering
  \includegraphics[width=0.49\linewidth]{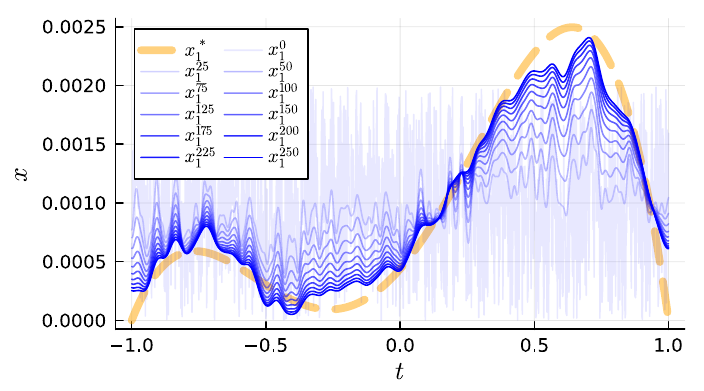}
  \includegraphics[width=0.49\linewidth]{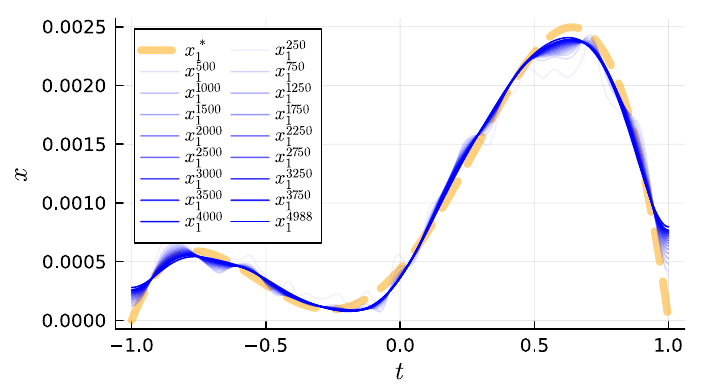}
  \includegraphics[width=0.49\linewidth]{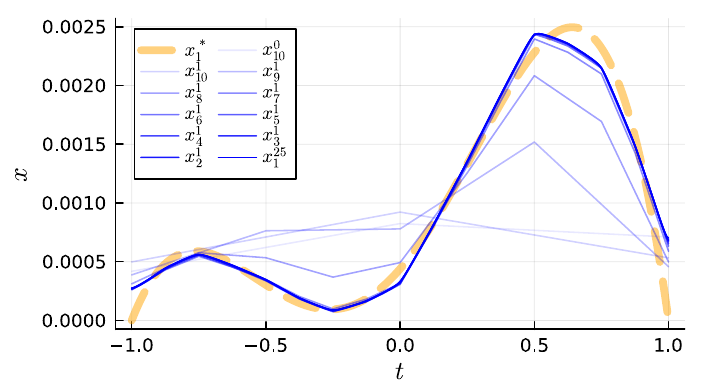}
  \includegraphics[width=0.49\linewidth]{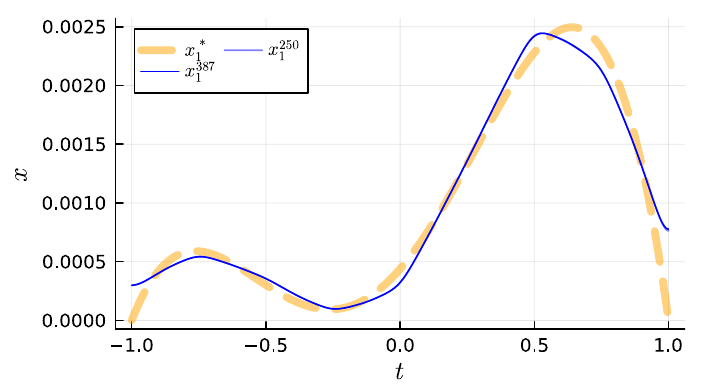}
  \caption{Typical progression of iterates \(x_s^k\) for scale \(s\) and iteration \(k\) using single scale (top row) and multiscale (bottom row) approaches for the motivating example from \cref{sec-motivating-example}. The clean ground truth is plotted behind in the thick dashed lined. }\label{fig-iterate-progression}
\end{figure}

\Cref{fig-multi-vs-single-time} shows the improvement in computation time versus problem size achieved by the multiscale approach.
\Cref{fig-iterate-progression} shows the typical convergence of iterates \(x_s^k\) for single scale and multiscale approaches.
And \cref{fig-loss-convergence-in-time} shows the typical loss convergence for the single scale and multiscale methods.

We call multiscale's tendency to smooth iterates an implicit regularization. This helps smooth solutions by carrying information from farther away points to speed up convergence. In this problem, the graph laplacian regularizer only looks at the immediate neighbouring points. So working at coarser scales lets us compare points that are not immediately neighbouring at the finest scale.

\begin{figure}[t]
  \centering
  \includegraphics[width=0.49\linewidth]{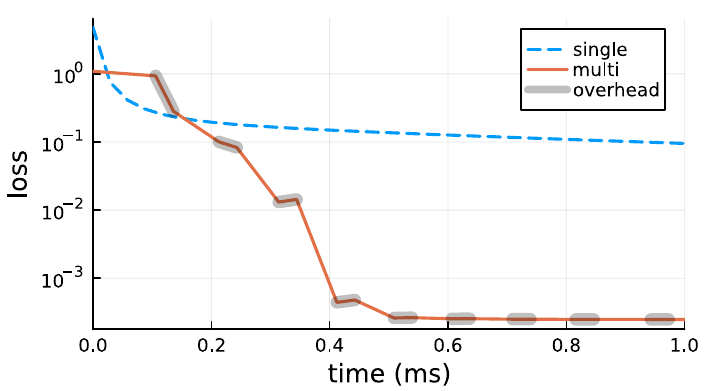}
  \includegraphics[width=0.49\linewidth]{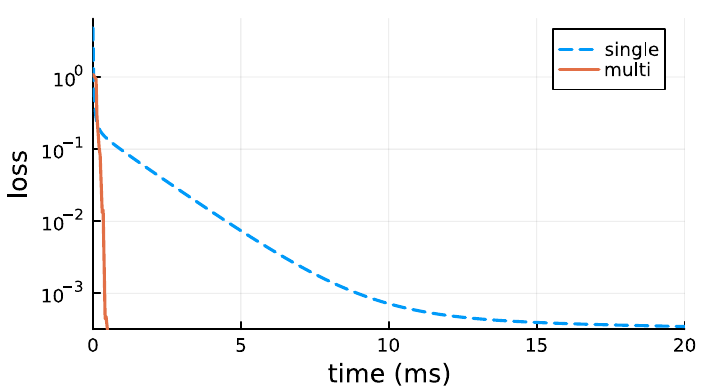}
  \caption{Typical loss convergence using single scale (dashed blue) and multiscale (solid orange) approaches for the motivating example from \cref{sec-motivating-example}. The left plot shows the first millisecond, and the right plot shows the first 20 milliseconds. The left plot also highlights in grey regions that are not from projected gradient descent steps. These are the result of minimal overhead with the multiscale method such as interpolation, allocations, and extra function calls.}\label{fig-loss-convergence-in-time}
\end{figure}

\subsection{Density demixing as optimization}\label{the-optimization-problem}

Graham et.\ al.~\cite{graham_tracing_2025} provides a full treatment of this formulation which we summarize here. Additional details are given in \cref{additional-tensor-details-and-reference}.

Given \(I\) mixtures \(y_i:\mathcal{D}\subseteq\mathbb{R}^N\to\mathbb{R}_+\) of \(R\) source probability density functions
\(b_r:\mathcal{D}\subseteq\mathbb{R}^N\to\mathbb{R}_+\),
\(
y_i(x) = \textstyle\sum_{r\in[R]} a_{i,r} b_r(x),
\)
we seek to recover the sources \(\{b_r\}\) and their mixing coefficients
\(a_{i,r}\). We assume \(R<I\) and that the source densities \(\{b_r\}\) are
linearly independent, which ensures this is a well-posed problem.

The continuous problem formulation is
\begin{subequations}\label{eq-continuous-decomposition}
\begin{equation*}\label{eq-continuous-decomposition-problem}
  \min_{\{a_{i,r}\}, \{b_r\}}
  \half\textstyle\sum_{i\in[I]} \left\lVert \sum_{r\in[R]} a_{i,r} b_r  - y_i\right\rVert_2^2
\end{equation*}
such that for all \(i\in[I]\),
\begin{alignat}{2}
    1 &= \textstyle\sum_{r\in[R]} a_{i,r}, \qquad & a_{i,r} &\geq 0, \label{eq-continuous-decomposition-constraints-a}\\
    1 &= \textstyle\int_{\mathcal{D}} b_r(x)\, dx, \qquad & b_r(x) &\geq 0 \quad \forall x\in\mathcal{D}, \ \forall r\in[R]. \label{eq-continuous-decomposition-constraints-b}
\end{alignat}
\end{subequations}

Discretizing yields the Tucker-1 decomposition problem (see \cref{def-tucker-1-decomposition})
\[
\min\Set{
  \tfrac{1}{2}\left\lVert B \times_1 A  - Y \right\rVert_F^2
  | A\in\Delta_{R}^I,\ B\in\Delta_{K_1\dots K_N}^R
},
\]
\begin{subequations}\label{eq-constrained-least-squares}
\begin{align}
\Delta_{R}^I &= \Set{A\in\mathbb{R}_+^{I\times R} | \textstyle\sum_{r=1}^R A[i,r] = 1, \ \forall i\in [I]}, \\
\Delta_{K_1\dots K_N}^R &= \Set{B\in\mathbb{R}_+^{R\times K_1\times\dots \times K_N} | \textstyle\sum_{k_1,\dots,k_N=1}^{K_1\dots K_N} B[r,k_1,\dots,k_N] = 1, \ \forall r\in [R]}.
\end{align}
\end{subequations}

In the geological data example (\cref{sec-geological-probability-unmixing}), each dimension of \(b_r\) is independent. This allows a simpler discretization, so that \(B\) becomes a third-order tensor with
\[
B\in\Delta_{K}^{RJ}=\Set{B\in\mathbb{R}_+^{R\times J \times K} | \textstyle\sum_{k=1}^{K} B[r,j,k] = 1, \ \forall (r,j)\in [R]\times[J]},
\]
and we reinterpret \(Y\in\mathbb{R}_+^{I\times J\times K}\) accordingly.

\subsection{Synthetic data}\label{sec-synthetic-probability-unmixing}

We generate three source distributions for a synthetic test. Each
distribution is a 3-dimensional product of standard distributions; see
\cref{sec-synthetic-data-setup} and this paper's GitHub
repository~\cite{Richardson_multiscale_paper_code} for details.
\Cref{tab-synthetic-benchmark} reports benchmarks of the two approaches
across 20 samples.

\begin{table}[t]
\caption{Benchmarks over 20 runs of the synthetic density demixing problem:
single-scale \texttt{factorize} versus \texttt{multiscale\_factorize}.
Columns report the median and mean ($\pm$ standard deviation) wall-clock
time, the range of times, the median share of time spent in garbage
collection (GC), and the total memory allocated.}
\label{tab-synthetic-benchmark}
\centering
\small
\begin{tabular}{lccccc}
\hline
Method & median & mean $\pm\ \sigma$ & min--max & GC & memory\\
\hline
single-scale & 2.412\,s & 2.418\,s $\pm$ 0.628\,s & 1.305--3.487\,s & 27.7\,\% & 1.46\,GiB\\
multiscale & 0.335\,s & 0.456\,s $\pm$ 0.313\,s & 0.231--1.583\,s & 11.6\,\% & 0.36\,GiB\\
\hline
\end{tabular}
\end{table}

The function \texttt{multiscale\_factorize} runs roughly seven times faster and uses about one-fourth of the memory. This comparison includes overhead such as interpolating the tensor and repeatedly calling the internal
\texttt{factorize} function, which represents approximately \(4\%\) of the total time. The specific numbers are less important than the qualitative comparison: both algorithms could be optimized and benchmarked on state-of-the-art hardware, but this basic implementation demonstrates that multiscale methods can accelerate single-scaled algorithms.

In these \texttt{multiscale\_factorize} benchmarks, rather than performing one iteration at each scale coarser than the finest scale, we advance to the next scale once sufficient progress has been made; see \cref{sec-synthetic-data-setup} for details.
This differs from our analysis in \cref{cor-greedy-cost}, which suggests using exactly \(K_s=1\) at coarse scales. However, running additional iterations at coarser scales often reduces the total number of iterations required at the finest scale.

\Cref{fig-time-vs-coarse-iterations} (left) shows median runtime as a function of the number of fixed coarse iterations \(K_s\) in multiscale for \(s=S,S-1,\dots,2\), where we run iterations at the finest scale until the objective falls below \(10^{-6}\). This figure demonstrates that performing more than one iteration at coarser scales improves multiscale performance, but only to a point: in \cref{fig-time-vs-coarse-iterations} (left), more than 12 iterations at each coarse scale wastes time. \Cref{fig-time-vs-coarse-iterations} (right) illustrates this by showing the number of fine-scale iterations \(K_1\) needed to converge to a minimum value. Our implementation of \texttt{multiscale\_factorize} \cite{Richardson_BlockTensorFactorization_jl} addresses this by allowing multiple iterations at coarser scales and advancing to the next scale when other criteria are met, such as small gradients or objective values.

Both plots in \cref{fig-time-vs-coarse-iterations} show that multiscale with exactly \(K_s=1\) iterations at coarse scales \(s=S,S-1,\dots,2\) performs slightly worse than single-scale optimization. We attribute this to interpolation and to overhead costs in the multiscale approach. The problem size of \(2^6+1=65\) points along each continuous dimension may be too small to see immediate benefit from multiscale when only one iteration is performed at each scale; larger problems may needed to realize the advantage illustrated by \cref{fig-multi-vs-single-time}. However, flexible criteria to advance between scales may make multiscale competitive even for smaller problems as shown in the benchmark tests.

\begin{figure}[t]
  \centering
  \includegraphics[width=0.49\linewidth]{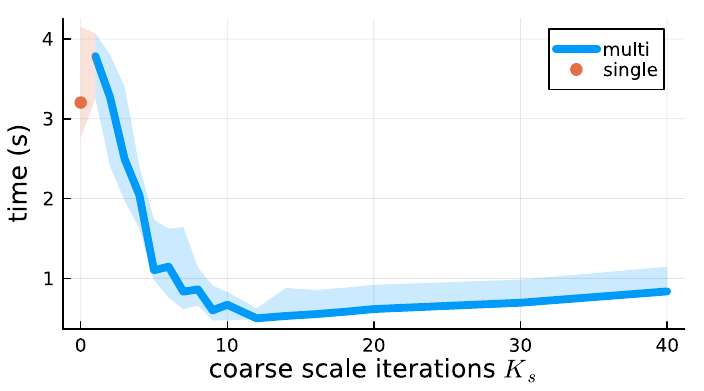}
  \includegraphics[width=0.49\linewidth]{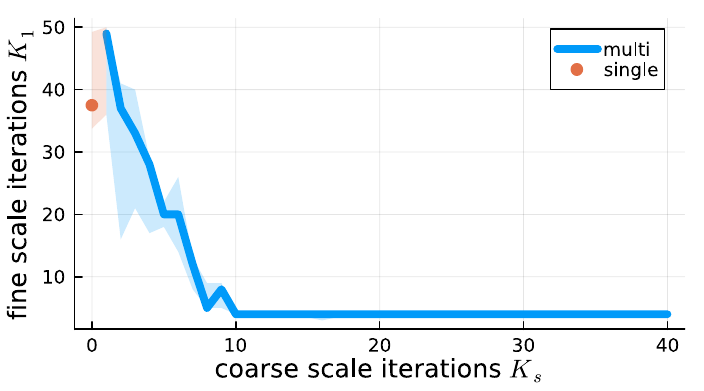}
  \caption{(Left) Median time until convergence at the finest scale \(s=1\) as a function of number of fixed iterations \(K_s\) at coarser scales \(s=S,S-1,\dots,2\). (Right) Median number of iterations \(K_1\) at the finest scale \(s=1\). Shaded ribbon shows the bottom and top quartile times (left) and iterations (right).}\label{fig-time-vs-coarse-iterations}
\end{figure}

\subsection{Real geological
data}\label{sec-geological-probability-unmixing}

We use the same sedimentary data and Tucker-$1$ model described in Graham
et.\ al.~\cite{graham_tracing_2025} to factorize a tensor containing
mixtures of estimated densities. \Cref{sec-geo-numerics-details} provides
the details, and the full code appears in
\newline\texttt{distribution\_unmixing\_geology.jl} in this paper's GitHub
repository~\cite{Richardson_multiscale_paper_code}.
\Cref{tab-geological-benchmark} reports the benchmarks for
\texttt{factorize} and \texttt{multiscale\_factorize}.

\begin{table}[t]
\caption{Benchmarks of the geological density demixing problem:
single-scale \texttt{factorize} versus \texttt{multiscale\_factorize}.
Columns as in \cref{tab-synthetic-benchmark}.}
\label{tab-geological-benchmark}
\centering
\small
\begin{tabular}{lccccc}
\hline
Method & median & mean $\pm\ \sigma$ & min--max & GC & memory\\
\hline
single-scale & 417\,ms & 389\,ms $\pm$ 109\,ms & 174--594\,ms & 19.9\,\% & 361\,MiB\\
multiscale & 91\,ms & 96\,ms $\pm$ 19\,ms & 78--154\,ms & 0.0\,\% & 106\,MiB\\
\hline
\end{tabular}
\end{table}

By every metric, the multiscale approach is faster and uses less memory than the standard single-scale factorization. Comparing median time and memory estimates, the multiscale method runs roughly four times faster with about a third of the memory.

\section{Conclusion}\label{conclusion}

We developed a multiscale approach for optimizing Lipschitz continuous functions by solving discretized problems at progressively finer scales. Our convergence analysis establishes that both greedy and lazy variants achieve the finest-scale solution through per-scale descent combined with controlled interpolation error. The framework extends to linear and norm constraints and tensor-valued functions.

The explicit error bounds derived quantify the tradeoff between discretization granularity and solution accuracy, and prove that multiscale outperforms direct fine-scale optimization when the problem size and Lipschitz constant are sufficiently large.

Several directions merit further investigation. Our analysis assumes a fixed number of base algorithm iterations at each scale. However, an adaptive iteration count that responds to convergence progress could improve efficiency. With regard to the discretization, the dyadic scheme might be replaced by adaptive grid refinement based on local Lipschitz estimates. Second-order base algorithms present another avenue, though their cost would be better characterized in terms of linear system solves rather than gradient evaluations. Finally, interpreting the pointwise discretization as a discrete wavelet transform suggests generalizations to other wavelet expansions.

\bibliographystyle{jabbrv_siam}
\bibliography{references}

\appendix

\section{Proofs}

In the following proofs, we take the unlabeled norm
\(\left\lVert \cdot \right\rVert = \left\lVert \cdot \right\rVert_2\) to
be the \(2\)-norm.

\newcommand{\prooftitle}[2]{Proof of \cref{#1} (#2)}
\pdfstringdefDisableCommands{%
  \def\prooftitle#1#2{Proof of #2}%
}

\subsection{\prooftitle{prp-linear-constraint-scaling}{Single linear constraint
scaling}}\label{sec-linear-constraint-scaling-proof}

For an even number of points \(I\), we have
\begin{align*}
&\left\lvert 2\left\langle \coarse{a}, \coarse{x} \right\rangle - b \right\rvert = \left\lvert 2\textstyle\sum_{i\text{ odd}} a[i]x[i] -\textstyle\sum_{i=1}^I a[i]x[i]\right\rvert  = \Big\lvert  \textstyle\sum_{i\text{ odd}}a[i]x[i] - \textstyle\sum_{i\text{ even}} a[i]x[i]\Big\rvert \\
&\leq \textstyle\sum_{i\text{ odd}}\left\lvert a[i]x[i]-a[i+1]x[i+1]
 \right\rvert
\leq \textstyle\sum_{i\text{ odd}}\lipschitz{fg}\Delta t\left\lvert i - (i+1) \right\rvert = \tfrac{I}{2}\lipschitz{fg} \tfrac{1}{I-1}.
\end{align*}
The second inequality follows from
\cref{cor-lipschitz-vectors-and-functions} in combination with
the product \(a[i] x[i] = f(t[i])g(t[i])\)
being Lipschitz with constant
\(
\lipschitz{fg} = (\lipschitz{f} \left\lVert g \right\rVert_{\infty} + \lipschitz{g} \left\lVert f \right\rVert_{\infty})
\)
; see \cref{lem-product-of-lipschitz-functions}.
Dividing by \(2\) proves the first statement.

For odd \(I\), we have
\begin{align*}
&\left\lvert 2\left\langle \coarse{a}, \coarse{x} \right\rangle - (1 + \tfrac{1}{I})b \right\rvert =
\left\lvert \textstyle\sum_{i\text{ odd}} a[i]x[i] - \textstyle\sum_{i\text{ even}} a[i]x[i] -\tfrac{1}{I}b\right\rvert \\
&= \left\lvert \textstyle\sum_{j=1}^{(I-1)/2} a[2j-1]x[2j-1] + a[i]x[i] - \textstyle\sum_{j=1}^{(I-1)/2} a[2j]x[2j] -\tfrac{1}{I}b\right\rvert \\
&\leq \textstyle\sum_{j=1}^{(I-1)/2} \left\lvert  a[2j-1]x[2j-1] - a[2j]x[2j] \right\rvert + \tfrac{1}{I} \left\lvert I a[i]x[i] -b\right\rvert \\
&= \textstyle\sum_{j=1}^{(I-1)/2} \left\lvert  a[2j-1]x[2j-1] - a[2j]x[2j] \right\rvert + \tfrac{1}{I} \textstyle\sum_{i=1}^{I}\left\lvert a[i]x[i] - a[i]x[i]\right\rvert
\end{align*}
The first sum is bounded by \(\tfrac{I-1}{2}\lipschitz{fg} \frac{1}{I-1} = \lipschitz{fg}\tfrac{1}{2}\)
in a similar manner to the even \(I\) case. The second term is bounded by
\(
\tfrac{1}{I} \textstyle\sum_{i=1}^{I}\lipschitz{fg}\Delta t \left\lvert I - i \right\rvert
= \tfrac{1}{I} \lipschitz{fg}\tfrac{1}{I-1} \tfrac{I(I-1)}{2}= \lipschitz{fg}\tfrac{1}{2}.
\)
Adding these terms together gives us the bound \(\left\lvert 2\left\langle \coarse{a}, \coarse{x} \right\rangle - \tfrac{I+1}{I}b \right\rvert \leq \lipschitz{fg}\).
Dividing by \(2\) completes the proof.

\begin{lemma}[Product of Lipschitz
Functions]\protect\hypertarget{lem-product-of-lipschitz-functions}{}\label[lemma]{lem-product-of-lipschitz-functions}
Let
\(f,g:\mathcal{D}\subseteq\mathbb{R}^{I_1\times \dots \times I_N}\to\mathbb{R}\)
be \(\lipschitz{f}\) and \(\lipschitz{g}\) Lipschitz respectively. Also
assume \(f\) and \(g\) are bounded with
\(f(A)\leq \left\lVert f \right\rVert_\infty\) and
\(g(A)\leq \left\lVert g \right\rVert_\infty\) for all
\(a\in\mathcal{D}\). Then the product function \((fg)(A)=f(A)g(A)\) is
Lipschitz with constant
\[
\lipschitz{fg} =  \lipschitz{f} \left\lVert g \right\rVert_\infty +\lipschitz{g} \left\lVert f \right\rVert_\infty .
\]

\end{lemma}

\begin{proof}
Let \(f,g:\mathcal{D}\to \mathbb{R}^{I_1\times \dots \times I_N}\), and
\(A,B\in\mathcal{D}\). \[
\begin{aligned}
\left\lvert f(A)g(A)-f(B)g(B) \right\rvert&= \left\lvert f(A)g(A)-f(A)g(B)+f(A)g(B)- f(B)g(B)\right\rvert\\
&\leq \left\lvert f(A) \right\rvert\left\lvert g(A)-g(B) \right\rvert + \left\lvert g(B) \right\rvert\left\lvert f(A)-f(B) \right\rvert \\
&\leq \left\lvert f(A) \right\rvert \lipschitz{g}\left\lVert A-B \right\rVert_F+ \left\lvert g(B) \right\rvert \lipschitz{f}\left\lVert A-B \right\rVert_F \\
&\leq \left\lVert f \right\rVert _{\infty}\lipschitz{g}\left\lVert A-B \right\rVert_F+\left\lVert g \right\rVert _{\infty}\lipschitz{f}\left\lVert A-B \right\rVert_F \\
&= \left(\left\lVert f \right\rVert _{\infty}\lipschitz{g}+\left\lVert g \right\rVert _{\infty}\lipschitz{f}\right)\left\lVert A-B \right\rVert_F
\end{aligned}
\]
\end{proof}

\subsection{\prooftitle{lem-lipschitz-interpolation}{Lipschitz function interpolation}}\label{sec-lipschitz-interpolation-proof}

Let \(f:\mathbb{R}^n\to\mathbb{R}\) be \(\lipschitz{f}\)-Lipschitz,
\(a,b\in\mathbb{R}^n\), and \(\lambda = \lambda_1\), \(1-\lambda = \lambda_2\) for \(\lambda\in[0,1]\).
Our goal is to bound
\(
\left\lvert ((1-\lambda) f(a)+\lambda f(b)) - f((1-\lambda) a+\lambda b) \right\rvert.
\)
We achieve the tightest bound
by separating
\(
f((1-\lambda) a+\lambda b)=  (1-\lambda) f((1-\lambda) a+\lambda b) + \lambda f((1-\lambda) a+\lambda b)
\)
and using triangle inequality to get
\[
\begin{aligned}
&\left\lvert ((1-\lambda) f(a)+\lambda f(b)) - f((1-\lambda) a+\lambda b) \right\rvert\\
&\leq \left\lvert (1-\lambda) f(a)-(1-\lambda) f((1-\lambda) a+\lambda b)\right\rvert  + \left\lvert \lambda f(b) - \lambda f((1-\lambda) a+\lambda b) \right\rvert \\
&\leq (1-\lambda) \lipschitz{f}\left\lVert a-((1-\lambda) a+\lambda b)\right\rVert _2  +\lambda  \lipschitz{f}\left\lVert b - ((1-\lambda) a+\lambda b) \right\rVert _2 \\
&= 2\lambda(1-\lambda) \lipschitz{f}\left\lVert a-b\right\rVert _2.
\end{aligned}
\]

\subsection{\prooftitle{lem-exact-interpolation}{Exact interpolation}}\label{sec-exact-interpolation-proof}

Using \cref{lem-lipschitz-interpolation} with \(\lambda=1/2\), we
have for even \(j\), \[
\begin{aligned}
\left\lvert x_s[{j}]-x_s^*[j] \right\rvert &= \left\lvert  \tfrac{1}{2}\hspace{-0.5mm}\left( x_{s+1}\left[{\tfrac{j}{2}}\right] + x_{s+1}\left[{\tfrac{j}{2}+1}\right] \right) - f(t_{s}\left[j\right])\right\rvert \\
&= \left\lvert  \tfrac{1}{2}\hspace{-0.5mm}\left( f\left(  t_{s+1}\left[{\tfrac{j}{2}}\right] \right) + f\left(  t_{s+1}\left[{\tfrac{j}{2}}+1\right] \right) \right) - f\left( \tfrac{1}{2}\hspace{-0.5mm}\left( {t_{s+1}\left[{\tfrac{j}{2}}\right]+t_{s+1}\left[{\tfrac{j}{2}+1}\right]}\right) \right)\right\rvert \\
 &\leq \tfrac{\lipschitz{f}}{2}\hspace{-0.5mm}\left\lvert t_{s+1}\left[{\tfrac{j}{2}}\right]-t_{s+1}\left[{\tfrac{j}{2}+1}\right]\right\rvert =\tfrac{\lipschitz{f}}{2}\tfrac{1}{I-1}. %
\end{aligned}
\]
For odd \(j\), the interpolated values match the function exactly
\[
x_s[j]=\interp{x}_{s+1}[j]=x_{s+1}\left[\tfrac{j+1}{2}\right]=x_{s+1}^*\left[\tfrac{j+1}{2}\right]=f\left(t_{s+1}\left[\tfrac{j+1}{2}\right]\right)=f(t_{s}[j])=x_{s}^*[j].
\]
Summing over all \(j\), we bound the squared error \[
\begin{aligned}
\left\lVert x_s-x_s^* \right\rVert_{2}^2&= \textstyle\sum_{j=1}^J\left\lvert x_s[j]-x_s^*[j] \right\rvert ^2=\textstyle\sum_{j \text{ even}}\left\lvert x_s[j]-x_s^*[j] \right\rvert^2\\
&\leq \textstyle\sum_{j\text{ even}} \tfrac{\lipschitz{f}^2}{4(I-1)^2} \leq (I-1) \tfrac{\lipschitz{f}^2}{4(I-1)^2} =\tfrac{\lipschitz{f}^2}{4(I-1)}.
\end{aligned}
\]
Taking square roots completes the proof.

\subsection{\prooftitle{lem-inexact-interpolation}{Inexact interpolation}}\label{sec-inexact-interpolation-proof}

Let the inexact values be \(x_{s+1}[i]=x_{s+1}^*[i]+\delta [i]\), and
interpolate to get \(x_{s}=\interp{x}_{s+1}\):
\[
\begin{aligned}
  &\interp{x}_{s+1}[j]\\
  &=\begin{cases}
  x_{s+1}[\frac{j+1}{2}]\hspace{23.7mm}=x_{s+1}^*[\frac{j+1}{2}] +\delta [{\frac{{j+1}}{2}}],  &\hspace{-2mm} \text{$j$ odd},\\
  \tfrac{1}{2}\hspace{-0.5mm}\left( x_{s+1}[\frac{j}{2}] + x_{s+1}[\frac{j}{2}+1] \right) =\tfrac{1}{2}\hspace{-0.5mm}\left( x_{s+1}^*[\frac{j}{2}] + x_{s+1}^*[\frac{j}{2}+1] +\delta [{\frac{j}{2}}]+\delta [{{\frac{j}{2}}+1}] \right), &\hspace{-2mm} \text{$j$ even.}
  \end{cases}
\end{aligned}
\]
Let \(e = \interp{x}_{s+1}[j]-\interp{x}_{s+1}^*[j] \).
Note \(\interp{x}_{s+1}^*\) is the interpolated vector of \(x_{s+1}^*\)
where \(x_{s+1}^*[i]=f(t_{s+1}[i])\), and is possibly different from the
exact discretization \(x_{s}^*[j]=f(t_{s}[j])\). Of course the entries
\(\interp{x}_{s+1}^*[j]=x_{s}^*[j]\) for odd \(j\).
We can now bound \(\lVert e \rVert\) in terms of \(\delta\).
\begin{align*}
\lVert e \rVert^2 &= \sum_{j\text{ odd}}\hspace{-1mm}  \left\lvert\delta \left[{\tfrac{{j+1}}{2}}\right] \right\rvert^2 + \sum_{j\text{ even}}\hspace{-1.5mm}  \left\lvert \tfrac{1}{2} \left( \delta \left[{\tfrac{j}{2}}\right] + \delta \left[{{\tfrac{j}{2}}+1}\right] \right)\right\rvert^2  = \left\lVert \delta \right\rVert^2 + \tfrac{1}{4}\sum_{i=1}^{I-1} \left( \delta [i]+\delta [{i+1}] \right)^2 \\
&\leq \left\lVert \delta \right\rVert^2 + \tfrac{1}{4}\left(\sum_{i=1}^{I} \delta [i]^2 +  \sum_{i=0}^{I-1}\delta [{i+1}]^2 +2\sum_{i=1}^{I-1}\delta [i]\delta [{i+1}] \right) \\
&\leq \tfrac{3}{2}\left\lVert \delta \right\rVert^2 +\tfrac{1}{2}\left(  \sum_{i=1}^{I}\delta [i]^2\right)^{1/2}\left(\sum_{i=0}^{I-1}\delta [{i+1}]^2\right)^{1/2}   \text{(Cauchy-Schwarz, add \(\delta [I]\), \(\delta [0]\))}\\
&= \tfrac{3}{2}\left\lVert \delta \right\rVert^2 +\tfrac{1}{2}\left\lVert \delta \right\rVert\left\lVert \delta \right\rVert = 2\left\lVert \delta \right\rVert^2
\end{align*}
Taking square roots and substituting gives us \(
\left\lVert \interp{x}_{s+1}-\interp{x}_{s+1}^* \right\rVert \leq \sqrt{ 2 } \left\lVert x_{s+1}- x_{s+1}^* \right\rVert.
\)

We use triangle inequality and \cref{lem-exact-interpolation} to bound the difference between
the interpolated approximate values \(x_s=\interp{x}_{s+1}\) and the
true values \(x_{s}^*\): \[
\begin{aligned}
\left\lVert x_s - x_{s}^* \right\rVert _{2} &\leq \left\lVert x_s - \interp{x}_{s+1}^* \right\rVert _{2} + \left\lVert \interp{x}_{s+1}^*-x_{s}^* \right\rVert _{2} = \left\lVert \interp{x}_{s+1} - \interp{x}_{s+1}^* \right\rVert _{2}+ \left\lVert \interp{x}_{s+1}^*-x_{s}^* \right\rVert _{2}\\
&\leq \sqrt{ 2 } \left\lVert x_{s+1}-x_{s+1}^* \right\rVert _{2} + \tfrac{\lipschitz{f}}{2\sqrt{ I-1 }} .
\end{aligned}
\]

In the following proofs, we take the unlabeled norm
\(\left\lVert \cdot \right\rVert = \left\lVert \cdot \right\rVert_2\) to
be the \(2\)-norm.

\subsection{\prooftitle{thm-greedy-multiscale-descent-error}{Greedy multiscale error bound}}\label{sec-greedy-multiscale-descent-error-proof}

Using \cref{def-iterate-convergence},
\(\left\lVert {x}^{K_{s}}_{s}-x^*_{s} \right\rVert  = q^{K_{s}} \left\lVert x_{s}^{0} -x_{s}^* \right\rVert\) so by \cref{lem-inexact-interpolation} we have
\[
\left\lVert x^0_{s-1} - x^*_{s-1} \right\rVert = \left\lVert \interp{x}^{K_{s}}_{s} - x^*_{s-1} \right\rVert
\leq\sqrt{ 2 } q^{K_{s}} \left\lVert x_{s}^{0} -x_{s}^* \right\rVert + \tfrac{L}{2\sqrt{2^{S-s+1}}}.
\]
Substituting \(s \mapsto s+1\) and letting \(e_{s}=\left\lVert x^0_{s} - x^*_{s} \right\rVert\) and
\(C=\frac{\lipschitz{f^*}}{2\sqrt{2^{S+1}}}\)
gives us
\[
\begin{aligned}
e_{s}
&\leq \sqrt{ 2 } q^{K_{s+1}} e_{s+1} + \sqrt{ 2^{s+1} } C.
\end{aligned}
\]
Applying \(U\) on the finest scale and using this inequality recursively for \(s=1,\dots,S-1\) gives us the desired result, where \(r_s = \sum_{t=1}^s K_t\),
\[
\lVert x_{1}^{K_{1}}-x_{1}^* \rVert \leq q^{K-1} e_1 \leq \sqrt{ 2 ^{S-1}}q^{r_S}e_{S} + C\textstyle\sum_{s=1}^{S-1}  2 ^{s} q^{r_s}.
\]

\subsection{\prooftitle{thm-lazy-multiscale-descent-error}{Lazy multiscale error bound}}\label{sec-lazy-multiscale-descent-error-proof}

We first require the following lemma.

\begin{lemma}[Inexact Lazy
Interpolation]\protect\hypertarget{lem-inexact-lazy-interpolation}{}\label[lemma]{lem-inexact-lazy-interpolation}
Let \(e=x-x^*\) be the error between \(x\) and the solution values \(x^*[i]=f^*(t[i])\), and let \(e_{(s)}\) be the error in the newly added points at the scale $s$; see \cref{def-free-variables}. In the lazy multiscale setting for an \(\lipschitz{f^*}\)-Lipchitz
function \(f^*\) on \([0, 1]\), we have the interpolated error bound at
scale \(s\),
\[
\begin{aligned}
\left\lVert e_{(s)} \right\rVert&\leq \tfrac{\lipschitz{f^*}}{2\sqrt{ 2^{S-s} }} + \lVert e_{s+1} \rVert.
\end{aligned}
\]

\begin{proof}
The proof is similar to \cref{lem-inexact-interpolation} for
greedy multiscale and differs by a factor of \(\sqrt{2}\) on the
error term \(\lVert e_{s+1} \rVert\). This is shown by modifying
\cref{sec-inexact-interpolation-proof} to exclude odd
indexes since we only look at the error for the newly interpolated points.
\end{proof}

\end{lemma}

Using \cref{lem-inexact-lazy-interpolation}
with the convergence equation (\cref{def-iterate-convergence})
gives us \[
\left\lVert e_{(s)}^{K_{s}} \right\rVert \leq q^{K_{s}}\left( \tfrac{\lipschitz{f^*}}{2\sqrt{ 2^{S-s} }} + \left\lVert e_{s+1}^{K_{s+1}} \right\rVert \right).
\]
Using \(\lVert e_{s}^{K_{s}} \rVert^2 = \lVert e_{(s)}^{K_{s}} \rVert^2 + \lVert e_{s+1}^{K_{s+1}} \rVert^2\), we have the recursion relation
\[
\begin{aligned}
\left\lVert e_{s}^{K_{s}} \right\rVert^2&\leq q^{2K_{s}}\left( \tfrac{C}{\sqrt{ 2^{S-s} }} + \left\lVert e_{s+1}^{K_{s+1}} \right\rVert \right)^2 + \left\lVert e_{s+1}^{K_{s+1}} \right\rVert^2
\end{aligned}
\]
where
\(C=\tfrac{\lipschitz{f^*}}{2}\).
We can remove the squares because, for \(a,b,c\geq 0\), having \(
a^2 \leq b^2 + c^2 \leq b^2 + 2bc + c^2= (b+c)^2\) implies \( a \leq b + c.
\)
So we have a looser bound, \[
\begin{aligned}
\left\lVert e_{s}^{K_{s}} \right\rVert &\leq q^{K_{s}}\left( \tfrac{C}{\sqrt{ 2^{S-s} }} + \left\lVert e_{s+1}^{K_{s+1}} \right\rVert \right) + \left\lVert e_{s+1}^{K_{s+1}} \right\rVert = \left( 1+q^{K_{s}} \right)\left\lVert e_{s+1}^{K_{s+1}} \right\rVert + \tfrac{Cq^{K_{s}}}{\sqrt{ 2^{S-s} }}.
\end{aligned}
\]
Applying this bound recursively for every \(s=1,\dots,S-1\) gives us
the general formula
\[
\begin{aligned}
\left\lVert e_{1}^{K_{1}} \right\rVert &\leq \left\lVert e_{S}^{K_{S}} \right\rVert\prod_{s=1}^{S-1}(1+q^{K_{s}}) + C\sum_{s=1}^{S-1} \frac{1}{\sqrt{ 2^{S-s} }}q^{K_{s}}\prod_{j=1}^{s-1}\left( 1+q^{K_{j}} \right).
\end{aligned}
\]
Performing gradient descent on the coarsest scale lets us substitute \(\lVert e_{S}^{K_{S}} \rVert \leq \lVert e_{S}^{0}  \rVert q^{K_{S}}\)
giving us the first upper bound for any plan of iterations \(K_s\).
If we now assume each scale uses the name number of iterations \(K_s=K\), \(\prod_{j=1}^{s-1}\left( 1+q^{K} \right) = (1+q^{K})^{s-1}\).
Computing the resulting geometric sum gives the final upper bound in the theorem.

\subsection{\prooftitle{lem-piecewise-function-distance}{Piecewise linear function distance}}\label{sec-piecewise-function-distance-proof}

In the following, we use the substitution \(s = m_{i}(t-t_{1})+b_{i}\) so that \(ds = m_{i} dt\), where $$
m_i = \left( f(t_{i+1}) - f(t_{i}) - g(t_{i+1}) + g(t_{i}) \right)/\Delta t
\quad\text{and}\quad
b_i = f(t_{i}) - g(t_{i}).
$$
We evaluate
\begin{align*}
\lVert \hat{f}_x - \hat{g}_y  \rVert_{2}^2 &=\textstyle\int_{t_{1}}^{t_{I}} (\hat{f}_x(t) - \hat{g}_y(t))^2 \, dt  \\
&= \textstyle\sum_{i=1}^{I-1} \textstyle\int_{t_{i}}^{t_{i+1}} \left( \left( \tfrac{{f(t_{i+1}) - f(t_{i})}}{\Delta t} - \tfrac{{g(t_{i+1}) - g(t_{i})}}{\Delta t} \right) (t-t_{i})+ (f(t_{i}) - g(t_{i}) ) \right)^2 dt \\
&=\textstyle\sum_{i=1}^{I-1}  \textstyle\int_{t_{i}}^{t_{i+1}} (m_{i}(t-t_{i})+b_{i})^2 \, dt =\textstyle\sum_{i=1}^{I-1}  \tfrac{1}{m_{i}}\textstyle\int_{b_{i}}^{m_{i}\Delta t + b_{i}} s^2 \, ds \\
&=\textstyle\sum_{i=1}^{I-1}  \tfrac{1}{m_{i}} \tfrac{1}{3} ((m_{i}\Delta t + b_{i})^3 -b_{i}^3) \\
&= \textstyle\sum_{i=1}^{I-1}  \tfrac{\Delta t}{3} ((m_{i}\Delta t + b_{i})^2 + (m_{i}\Delta t + b_{i})b_i + b_{i}^2) \\
&\leq\tfrac{\Delta t}{3} \textstyle\sum_{i=1}^{I-1}  \tfrac32\left( (f(t_{i+1}) - g(t_{i+1}))^2 + (f(t_{i}) - g(t_{i}))^2 \right) \\
&=\tfrac{\Delta t}{2} \left( 2\textstyle\sum_{i=1}^{I}  (f(t_{i}) - g(t_{i}))^2 - (f(t_{1}) - g(t_{1}))^2 - (f(t_{I}) - g(t_{I}))^2 \right) \\
&=\tfrac{\Delta t}{2} \left( 2 \left\lVert x-y \right\rVert_{2}^2 - (x[1] - y[1])^2 - (x[I] - y[I])^2 \right) \leq\Delta t\left\lVert x-y \right\rVert_{2}^2,
\end{align*}
where the first inequality uses
\(a^2+ab+b^2 \leq \tfrac32(a^2+b^2)\) with \(a = m_i\Delta t + b_i\) and
\(b=b_i\).

\subsection{\prooftitle{lem-piecewise-function-approximation}{Piecewise linear function approximation}}\label{sec-piecewise-function-approximation-proof}

By \cref{lem-lipschitz-interpolation} with substitutions \(a\mapsto t_{i+1}\),
\(b\mapsto t_{i}\), and \(t \mapsto (t-t_{i})/\Delta t\) where
\(\Delta t = t_{i+1}-t_i\) we have
\(
\tfrac{t-t_{i}}{\Delta t} t_{i+1} + \left(1-\tfrac{t-t_{i}}{\Delta t}\right) t_i
= t
\)
and
\begin{align*}
\left\lvert \tfrac{t-t_{i}}{\Delta t}f(t_{i+1}) + \left(1-\tfrac{t-t_{i}}{\Delta t}\right) f(t_i) \right. -& \left. f\left(\tfrac{t-t_{i}}{\Delta t} t_{i+1} + \left(1-\tfrac{t-t_{i}}{\Delta t}\right) t_i \right)\right\rvert \\
&\leq 2\lipschitz{f}\tfrac{t-t_{i}}{\Delta t}\left(1-\tfrac{t-t_{i}}{\Delta t}\right)\left\lvert t_{i+1} - t_i \right\rvert\\
\lvert \tfrac{f(t_{i+1}) - f(t_i)}{\Delta t}(t-t_{i}) + f(t_i) - f\left(t\right)\rvert
&\leq \tfrac{2\lipschitz{f}}{\Delta t}({t-t_{i}})\left(t_{i+1} - t\right)\\
\lvert \hat{f}(t) - f(t)\rvert
&\leq \tfrac{2\lipschitz{f}}{\Delta t}({t-t_{i}})\left(t_{i+1} - t\right).
\end{align*}

We can then bound the error, using an integration substitution of
\(s=t-t_{i}\),
\begin{align*}
\lVert \hat{f} - f \rVert_{2}^2 &= \textstyle\int_{t_{1}}^{t_{I}} (\hat{f}(t) - f(t))^2  \, dt = \textstyle\sum_{i=1}^{I-1} \textstyle\int_{t_{i}}^{t_{i+1}} (\hat{f}(t) - f(t))^2  \, dt \\
&\leq \textstyle\sum_{i=1}^{I-1} \textstyle\int_{t_{i}}^{t_{i+1}} \left(\tfrac{2\lipschitz{f}}{\Delta t}({t-t_{i}})\left(t_{i+1} - t\right)\right)^2 \, dt \\
&= \tfrac{4\lipschitz{f}^2}{\Delta t^2} \textstyle\sum_{i=1}^{I-1} \textstyle\int_{t_{i}}^{t_{i+1}} ({t-t_{i}})^2\left(t_{i+1} - t\right)^2 \, dt = \tfrac{4\lipschitz{f}^2}{\Delta t^2} \textstyle\sum_{i=1}^{I-1} \textstyle\int_{0}^{\Delta t} s^2\left(\Delta t - s\right)^2 \, ds \\
&= \tfrac{4\lipschitz{f}^2}{\Delta t^2} (I-1) \left( \tfrac{1}{3} \Delta t^2\Delta t^3 - \tfrac{1}{4} 2\Delta t \Delta t^4 + \tfrac{1}{5} \Delta t^5 \right)
= \tfrac{2}{15}\lipschitz{f}^2 \Delta t^2 (t_I -t_1).
\end{align*}

\subsection{\prooftitle{thm-problem-connection}{Continuous problem connection}}\label{sec-problem-connection-proof}

Let \(\epsilon>0\),
\(I >  \sqrt{8/15}  \lipschitz{f} \epsilon^{-1} + 1\), and
\(\left\lVert x -x^* \right\rVert_2^2 < \lipschitz{f} \epsilon/\sqrt{30}\).
This means
\(
(I-1)^{-1} < (\epsilon \sqrt{15})/(\sqrt{8}\lipschitz{f} ).
\)
We use triangle inequality with the piecewise
linear approximation \(\hat{f}_{x^*} = \hat{f}\) of \(f\) where
\(x^*[i]=f(t[i])\) is the true discretization of \(f\). This gives us
\[
\begin{aligned}
\lVert \hat{f}_x - f \rVert_2 &\leq \lVert \hat{f}_x - \hat{f}_{x^*} \rVert_2 + \lVert \hat{f}_{x^*} - f \rVert_2 = \lVert \hat{f}_x - \hat{f}_{x^*} \rVert_2 + \lVert \hat{f} - f \rVert_2.
\end{aligned}
\]
By \cref{lem-piecewise-function-distance} and our bounds on \((I-1)^{-1}\) and \(\left\lVert x -x^* \right\rVert_2^2\), we bound the first term:
\begin{align*}
\lVert \hat{f}_x - \hat{f}_{x^*} \rVert_2^2 &\leq \Delta t\left\lVert x-x^* \right\rVert_{2}^2 < \tfrac{1}{I-1}\cdot\tfrac{ \lipschitz{f} \epsilon}{\sqrt{30}}  < \tfrac{\epsilon \sqrt{15}}{\sqrt{8}\lipschitz{f}}\cdot\tfrac{ \lipschitz{f} \epsilon}{\sqrt{30}} =\tfrac{\epsilon^2}{4}.
\end{align*}
By \cref{lem-piecewise-function-approximation} and our bound on \((I-1)^{-1}\), we bound the second term:
\begin{align*}
\lVert \hat{f} - f \rVert_2^2 &\leq \tfrac{2}{15}  \lipschitz{f}^2 \Delta t^2 = \tfrac{2}{15}  \lipschitz{f}^2 \left(\tfrac{1}{I-1}\right)^2 < \tfrac{\epsilon^2}{4}.
\end{align*}
Taking square roots and adding these two bounds gives us our desired
result.

\subsection{\prooftitle{thm-expected-pgd-convergence}{Expected single-scale convergence}}\label{sec-expected-pgd-convergence-proof}

\textbf{Case 1:} \(\lVert x^*_{1} \rVert_2 = 1\).

Without loss of generality, assume (by symmetry) the solution \(x_1^*\) is such that
\(x^*_{1}[{I}] = 1\) and \(x^*_{1}[{i}]=0\) (fix a point on the sphere
aligned with the \(I\)th axis) so that \(x^*_{1}=e_{I}\), the unit vector
\(e_{I}=(0, \dots, 0, 1)\). Let
entries of our initialization be standard Gaussian
\(x^0_1[i]=g[i]\sim\mathcal{N}(0,1)\). We have
\[
\begin{aligned}
&\mathbb{E}_{g[i]\sim \mathcal{N}}\left\lVert x_1^0 - x_1^* \right\rVert_{2}^2 =\mathbb{E}_{g[i]\sim \mathcal{N}}\left\lVert g - e_{I} \right\rVert_{2}^2 =\mathbb{E}_{g[i]\sim \mathcal{N}}\left[ \textstyle\sum_{i=1}^{I-1} (g[i]-0)^2 +(g[I]-1)^2\right] \\
&=\textstyle\sum_{i=1}^{I-1} \mathbb{E}_{g[i]\sim \mathcal{N}}\left[g[i]^2\right] +\mathbb{E}_{g[I]\sim \mathcal{N}}\left[(g[I]-1)^2\right] =\left(I-1\right) + \left((1) - 2(0) +1\right) =I+1.
\end{aligned}
\]
With Gaussian concentration, we can take the square root of both sides \cite{vershynin_HighDimensionalProbability_2018}.

\textbf{Case 2:} \(\lVert x^*_{1} \rVert_2 = 0\).

Assume the solution is centred so that \(x^*_{1} = 0\in\mathbb{R}^{I}\).
Here, we have the well-known result \(
\mathbb{E}\left[ \left\lVert x^0_1 - x^*_{1}\right\rVert \right] = \mathbb{E}\left[ \left\lVert g - 0\right\rVert \right] = \sqrt{ I }
\); see \cite{vershynin_HighDimensionalProbability_2018}.

In either case, our initial error for a scaled or centred problem
goes like \(\sim \sqrt{ I }\). Since the number of points we have is
\(I\) is one plus a power of two \(I=2^S+1\), we \emph{expect} (that is, with high probability) the convergence \(
\left\lVert x_{1}^{K}-x_{1}^* \right\rVert\leq q^{K} \sqrt{ 2^S + 2 }.
\)

To ensure \(\left\lVert x_{1}^{K}-x_{1}^* \right\rVert \leq\epsilon\)
in expectation and prove \cref{cor-expected-pgd-convergence}, we need \[
\begin{aligned}
q^{K} \sqrt{ 2^S + 2 } &\leq \epsilon \iff
 \left(\tfrac{1}{2}\log\left(  2^S + 2  \right) + \log(1/\epsilon)\right)\big\slash(-\log q) \leq K.
\end{aligned}
\]

\subsection{\prooftitle{lem-cost-of-gd-vs-ms}{Cost of greedy multiscale}}\label{sec-cost-of-gd-vs-ms-proof}

The total cost of a single-scaled approach is \(
C_{\mathrm{GD}}=CK,
\)
and the cost for greedy multiscale is \(
C_{\mathrm{GM}} = \sum_{s=1}^S C_{s}K_{s}.
\)
In the case of greedy multiscale, one iteration at the finest scale \(s=1\) costs
the same for a single-scaled and multiscale approach \(C=C_1\).
Assume
\(
C_{s} = D I^p = D \left(2^{S-s+1}+1\right)^p
\)
for some constant \(D>0\), power \(p\geq 1\), and coarsest scale \(S\),
where there are \(2^{S}+1\) points at the finest scale \(s=1\).
We wish to lower bound the ratio \[
\tfrac{C_{s}}{C_{s+1}} = \tfrac{ D \left(2^{S-s+1}+1\right)^p}{D \left(2^{S-(s+1)+1}+1\right)^p}= \left(\tfrac{2^{S-s+1}+1}{2^{S-s}+1}\right)^p
\]
for \(1\leq s \leq S-1\). Letting \(x=S-s\), the function
\(g(x)=\frac{ 2^{x+1}+1 }{ 2^{x}+1}\) is increasing so it is minimized
at \(x=1\) on \(1\leq x = S-s \leq S - 1\). This means
\(g(x) \geq g(1) = 5/3\) and, since \(p\geq 1\),
\[
\tfrac{C_{s}}{C_{s+1}} \geq \left(\tfrac{5}{3}\right)^p \geq \tfrac{5}{3} \implies C_{1}\geq \tfrac{{5}}{3}C_{2}\geq \left( \tfrac{5}{3} \right)^2 C_{3}\geq \dots \geq \left( \tfrac{5}{3} \right)^{S-1} C_{S}.
\]
Multiplying through by \((3/5)^{S-1}\) and noting \(C=C_1\) gives us our final inequality and completes the proof with \(
C_{\mathrm{GM}} = \sum_{s=1}^S C_{s}K_{s} \leq C_{1}\sum_{s=1}^S \left( \frac{3}{5} \right)^{s-1} K_{s}.
\)

To show this is less than \(C_{\mathrm{GD}}\) with our plan of one iteration \(K_s=1\) at all scales except the finest \(K_1 = K-2\), observe, \[C_{\mathrm{GM}} = C(K-2) + C \textstyle\sum_{s=2}^S \left( \frac{3}{5} \right)^{s-1} = C(K-2) + C\frac{3/5 - (3/5)^S}{1-(3/5)}< CK = C_{\mathrm{GD}}.\]

We stress that the lower bound \(\tfrac{C_{s}}{C_{s+1}} = g(S-s)^p \geq \tfrac{5}{3}\) is quite pessimistic. The function \(g\) rapidly converges to \(2\) as \(S-s\) gets large (the fine scales have many points): \(g(10)>1.999\). Additionally, any power \(p>1\) will further inflate this bound. This implies the multiscale method will often be much cheaper in practice than this estimate yields.

\subsection{\prooftitle{cor-greedy-cost}{Sufficient conditions for greedy multiscale to be better}}\label{sec-greedy-cost-proof}

From \cref{lem-cost-of-gd-vs-ms}, we know greedy multiscale is cheaper than a single-scaled approach in this setting.
We need to show the upper bound on the
expected error for the single-scaled approach
(right-hand side of \cref{thm-expected-pgd-convergence}) is larger than the corresponding bound for greedy multiscale (right-hand side of \cref{thm-expected-greedy-convergence}). Assuming our plan for the number of iterations,
\(\sum_{t=2}^s K_s = s-1\) so our bound for greedy multiscale becomes
\(
q^{K_{1}}\sqrt{ 2 ^{S+1}}\left(q^{S-1}+\tfrac{\lipschitz{f^*}}{{2^{S+2}q}} \tfrac{ 2 q - (2 q)^{S} }{1 - 2 q}  \right).
\)

Assume \(0<q<\frac{1}{2}\), and
the number of scales \(S\) is
\(\geq 4\) and
\(
2^S \geq  \tfrac{\lipschitz{f^*} \sqrt{2}}{2(1-2q)(q^2-\sqrt{2}q^3)}.
\)
Because
\(S-1\geq 3 > 0\) and \(0 < q < 1/2\), we have \(1 \overset{}{>} 1 - (2q)^{S-1} > 0.\)
We also have
\[
\begin{aligned}
\frac{1}{q^2\sqrt{2^{-1} + 2^{-S}} - q^{S-1}} &\overset{(\text{a})}{<} \frac{1}{q^2\sqrt{2^{-1}} - q^{S-1}} = \frac{q^{-2}\sqrt{2}}{1 - \sqrt{2}q^{S-3}} \\
&\overset{(\text{b})}{\leq} \frac{q^{-2}\sqrt{2}}{1 - \sqrt{2}q} = \frac{\sqrt{2}}{q^2 - \sqrt{2}q^3}.
\end{aligned}
\]
We have the inequality \((\text{a})\) since \(2^{-S}>0\), and \((\text{b})\) since
\(S-3\geq 1\) implies \(q^{S-3}\leq q\). This gives us
\begin{align*}
2^S &\geq  \frac{\lipschitz{f^*} \sqrt{2}}{2(1-2q)(q^2-\sqrt{2}q^3)} \\
\text{(by \(1>1-(2q)^{S-1}\) and \((\text{b})\))} \quad 2^S & > \frac{\lipschitz{f^*} (1 - (2q)^{S-1})}{2(1-2q)(q^2\sqrt{2^{-1} + 2^{-S}} - q^{S-1})} \\
(q^2\sqrt{2^{-1} + 2^{-S}} - q^{S-1}) 2^S & > \frac{\lipschitz{f^*} (1 - (2q)^{S-1})}{2(1-2q)} \\
q^2\sqrt{2^{-1} + 2^{-S}} & > q^{S-1} + \frac{\lipschitz{f^*} (1 - (2q)^{S-1})}{2^{S+1} (1-2q)}\frac{2q}{2q} \\
q^2\sqrt{\frac{2^{S} + 2}{2^{S+1}}} & > q^{S-1} + \frac{\lipschitz{f^*} (2q - (2q)^{S})}{2^{S+2} q(1-2q)} \\
q^K\sqrt{2^{S} + 2} & > q^{K-2}\sqrt{2^{S+1}}\left(q^{S-1} + \frac{\lipschitz{f^*} (2q - (2q)^{S})}{2^{S+2} q(1-2q)}\right).
\end{align*}

Since \(K_1 = K-2\), this completes the proof.

\subsection{\prooftitle{lem-pgd-vs-lazy-multiscale}{Cost of lazy multiscale}}\label{sec-cost-of-gd-vs-lazy-ms-proof}

We assume a similar setup to \cref{lem-cost-of-gd-vs-ms}: \(C_{\mathrm{GD}}=CK=D \left(2^{S}+1\right)^p\) as before.
Now we compare against lazy multiscale with cost \(C_{\mathrm{LM}}=\sum_{s=1}^S C_{s}K_{s}\) and \(C_s=DI^p\), where \(I=2^{S-s}\)
for \(1 \leq s \leq S - 1\) (decent on free variables only), and \(C_S = C3^p\) (start with \(3\) points).
So we have the ratios,
\[
\tfrac{C}{C_{s}} =  \tfrac{ D \left(2^{S}+1\right)^p}{D \left(2^{S-s}\right)^p} > \tfrac{ \left(2^{S}\right)^p}{ \left(2^{S-s}\right)^p}= \left(2^{s}\right)^p > 2^{s}
\ \ \text{and}\ \
\tfrac{C}{C_{S}} =  \tfrac{ D \left(2^{S}+1\right)^p}{D 3^p} > \tfrac{ \left(2^{S}\right)^p}{ 3^p} > \tfrac{2^{S}}{3}.
\]
Performing the same number of iterations \(K_s = K'\) at each scale,
we have,
\[
\begin{aligned}
C_{\mathrm{LM}} &= \textstyle \sum_{s=1}^S C_{s}K_{s} \leq C \left(\textstyle \sum_{s=1}^{S-1} 2^{-s}K_{s} + 2^{-S}3K_{S}\right) \\
&\leq C K' \left(\textstyle \sum_{s=1}^{S-1} 2^{-s} + 2^{-S}3\right) = CK'\left(1-2^{-S}2 + 2^{-S}3\right) = CK'\left(1+ 2^{-S}\right).
\end{aligned}
\]

To ensure this is less than \(CK\), we need
\(
K' < \frac{1}{1+ 2^{-S}} K.
\)
The right side factor is maximized at \(S=2\) on \(2\leq S\), so we can
set
\(
K' = \lceil \frac{1}{1+ 2^{-2}} K \rceil - 1 = \left\lceil \frac{4}{5} K \right\rceil - 1
\)
and ensure that \(C_{\mathrm{LM}} < C_{\mathrm{GD}}\).

\subsection{\prooftitle{cor-lazy-cost}{Sufficient conditions for lazy multiscale to be better}}\label{sec-lazy-cost-proof}

From \cref{lem-pgd-vs-lazy-multiscale}, we know
lazy multiscale will be cheaper than a single-scaled approach. It remains to show that the bound on the error at the final scale for lazy multiscale on the right-hand side of
\cref{eq-lazy-multiscale-error-bound} is less than bound for the single-scaled approach
\cref{eq-pgd-expected-error}.

Let \(C=\lipschitz{f^*}/2\), \(h=1+q^{K'}\),
and \(g=q^{-K/5-1}\). To ensure first line bellow is valid, we cannot
have \(\sqrt{2}/h=1\). Otherwise, the logarithm in the denominator
becomes zero. Moreover, we assume \(K>5(\log_q(\sqrt{2}-1)-1)/4\) so
that \(\sqrt{2}/h>1\) and we can write inequality \((1)\) below ensuring \(\log(\sqrt{2}/h) > 0\).
\begin{align*}
S&\geq {\log\left( \tfrac{g}{h}\left( 2+C\tfrac{1}{h\sqrt{ 2 } -1} \right) \right)} \Big/{\log\left( \tfrac{\sqrt{ 2 }}{h} \right)} \\
\log\left( \tfrac{\sqrt{ 2 }}{h} \right)S& \overset{(1)}{\geq} \log\left( \tfrac{g}{h}\left( 2+C\tfrac{1}{h\sqrt{ 2 } -1} \right) \right) \\
\sqrt{ 2^S } &\geq gh^{S-1}\left( 2+C\tfrac{1}{h\sqrt{ 2 } -1} \right) \\
\sqrt{ 2^S+2 } &\overset{(2)}{>} gh^{S-1}\left( 2+C\tfrac{1-(\sqrt{ 2 } h)^{-(S-1)}}{h\sqrt{ 2 } -1}\right) \\
q^K\sqrt{ 2^S+2 } & > q^{4K/5-1}\left( 2h^{S-1}+C\tfrac{h^{S}\sqrt{ 2^S }-h\sqrt{ 2 }}{\sqrt{ 2^S }h(h\sqrt{ 2 } -1)}\right)\\
q^K\sqrt{ 2^S+2 } & \overset{(3)}{>} q^{\lceil 4K/5\rceil-1}\left( 2h^{S-1}+C\tfrac{h^{S}\sqrt{ 2^S }-h\sqrt{ 2 }}{\sqrt{ 2^S }h(h\sqrt{ 2 } -1)}\right)\\
q^K\sqrt{ 2^S+2 } & > q^{K'}2(1+q^{K'})^{S-1}+q^{K'}\tfrac{\lipschitz{f^*}}{2} \tfrac{(1+q^{K'})^{S}\sqrt{ 2^S }-(1+q^{K'})\sqrt{ 2 }}{\sqrt{ 2^S }(1+q^{K'})(\sqrt{ 2 }(1+q^{K'}) -1)}
\end{align*}

For inequality \((2)\), we use the fact that
\(\sqrt{ 2^S+2 }>\sqrt{ 2^S }\) and \(0<1-(\sqrt{ 2 }h)^{-(S-1)}<1\) to
relax the previous line. For inequality \((3)\), we use the fact that
\(0<q<1\) so that possibly multiplying by at most a factor of \(q\) only
makes the right-hand side smaller.

\section{Motivating example details}\label{sec-motivating-example-details}

This section explains additional details for the motivating example from \cref{sec-motivating-example,sec-motivating-example-numerics}.

For \cref{fig-multi-vs-single-time}, we compare total runtime for \(S=3, 4,\dots,12\), which yields finest discretizations with \(I_1=2^S+1\) points.
We initialize by uniformly discretizing the polynomial \(p(t) = -2.625t^4 - 1.35t^3 + 2.4t^2 + 1.35t + 0.225\) on \([-1,1]\) to obtain the initial density approximation \(x_S[i]=f(t_S[i])\Delta t_S\) at the coarsest scale.
At each scale \(s > 1\), we perform one (\(K_s=1\)) iteration; at the finest scale (\(s=1\)), and we iterate until the objective is within \(5\%\) of its optimal value. We generate Legendre measurements for \(m=1,\dots,5\) using the measurement operator $A_1$ from \cref{sec-motivating-example} and add \(5\%\) Gaussian noise to obtain \(y\). The regularization parameter \(\lambda = 10^{-4}\) balances data fidelity against smoothness.

For \cref{fig-iterate-progression}, we run the motivating example with \(2^{10}+1\) points at the finest scale \(s=1\). The top-left plot shows every 25th iterate \(k\) starting with the initialization at \(k=0\) and ending at \(k=250\) when running projected gradient descent at the finest scale (\(s=1\)). The top-right plot shows every 250th iterate \(k\), ending with the final iterate \(k=4988\) which satisfies the stopping criteria \(\tilde{\mathcal{L}}_1(x_1^{k}) < 0.00024\). The bottom-left plot shows \(x_s^k\) at each scale from the coarsest scale \(s=10\) to the second finest \(s=2\), and the iterate at the finest scale \(s=1\) after \(k=25\) iterations. We can make a fair comparison between \(x_1^{25}\) on the top-left and bottom-left plots since they are both the result of running \(k=25\) iterations of projected gradient descent at the finest scale. Using multiscale results in a smoother iterate since \(x_1^0\) was constructed from the chain of interpolations, rather than the completely random initialization used by the single scale approach. Because of this, the bottom-left plot highlights multiscale only needs \(k=387\) iterations at the finest scale to also achieve the same stopping criteria \(\tilde{\mathcal{L}}_1(x_1^{k}) < 0.00024\).

For \cref{fig-loss-convergence-in-time}, comparing iterations at different scales is not fair since we expect one projected gradient step to be cheaper at coarser scales. The loss \(\tilde{\mathcal{L}}_s(x_s^k)\) for the multiscale method is calculated at the respective scale \(s\) of the iterate, whereas the single scale only calculates the the loss \(\tilde{\mathcal{L}}_1(x_1^k)\) at the finest scale \(s=1\). This explains why the loss fluctuates up and down during interpolations for coarser scales \(s\) when the approximation \(\tilde{\mathcal{L}}_s\) for \(\mathcal{L}\) is worse. The coarsest scales also take longer than expected because of the additional function call overhead. Despite these drawbacks, multiscale more than makes up for these by reducing the loss significantly faster once iterating on the finer scales. This is make clear by the right plot in \cref{fig-loss-convergence-in-time}.

It could be argued that for optimal convergence rates with a first order method, we should not be using a fixed stepsize based on the global Lipschitz constant \cite{nesterov-smooth-2018}. Using accelerated methods would almost certainly accelerate convergence and not require almost \(5000\) iterations. But any sort of acceleration could also be used to speed up convergence for the multiscale method which already has a leg-up since the fine scale iterations start closer to the solution.

\section{Additional density unmixing with Tucker-1 tensor factorization details}\label{additional-tensor-details-and-reference}

\subsection{Tucker-1 tensor
decomposition}\label{sec-common-decompositions}

A tensor decomposition is a factorization of a tensor into multiple
(usually smaller) tensors, that can be recombined into the original
tensor. \Cref{sec-numerical-experiments} uses the Tucker-1
decomposition defined in \cref{def-tucker-1-decomposition}.

\begin{definition}[Tucker-1
Decomposition]\protect\hypertarget{def-tucker-1-decomposition}{}\label[definition]{def-tucker-1-decomposition}
A rank-\(R\) Tucker-\(1\) decomposition of a tensor
\(Y\in\mathbb{R}^{I_1\times\dots\times I_N}\) produces a matrix
\(A\in\mathbb{R}^{I_1\times R}\), and core tensor
\(B\in\mathbb{R}^{R\times I_2\times\dots\times I_N}\) such that
\begin{equation}\phantomsection\label{eq-tucker-1}{
Y[i_1,\dots,i_N] = \sum_{r=1}^{R} A[i_1,r] B[r, i_2,\dots,i_N]
}\end{equation}
entry-wise or more compactly,
\[
Y = B\times_1 A.
\]

\end{definition}

Tensor decompositions are not necessarily unique. It should be clear
that scaling one factor by \(c\neq 0\) and dividing another by \(c\)
yields the same original tensor. Furthermore, slices can be permuted
without affecting the the original tensor. Up to these manipulations,
for a fixed rank, there exist criteria that ensures their decompositions
are unique
\cite{kolda_TensorDecompositionsApplications_2009,kruskal_three-way_1977,bhaskara_uniqueness_2014}.

\subsection{Formulating density unmixing as a Tucker-1 decomposition problem}

In both the synthetic and geological data, the task is to recover
mixtures of probability densities. We accomplish this task by
formulating a discretized version of the problem as a tensor
decomposition problem.

In the case of the synthetic example in
\cref{sec-synthetic-probability-unmixing}, we have access to the
true probability density mixtures. For the real-world example in
\cref{sec-geological-probability-unmixing}, the continuous
probability density mixtures are estimated from sampling using kernel
density estimation \cite{chen_tutorial_2017,graham_tracing_2025}.

We use the
\(2\)-norm/Frobenius loss in the formulation of the continuous decomposition problem \cref{eq-continuous-decomposition-problem}, but alternative losses
can be used such as the KL divergence and their corresponding discrete
versions based on how the error between the data and model are
distributed \cite{gillis_nmf_2020}.
The constraints \cref{eq-continuous-decomposition-constraints-a,eq-continuous-decomposition-constraints-b} ensure each mixture in the model is indeed a probability
density function.

We interpret the tensors
\(Y\in\mathbb{R}_+^{I\times K_1\times\dots \times K_N}\) and
\(B\in\mathbb{R}_+^{R\times K_1\times\dots \times K_N}\) as samples of
the underlying continuous probability density functions
\[
Y[i, k_1,\dots,k_N] = y_i(x[k_1,\dots,k_N])\Delta x,\ \text{and}\ B[r, k_1,\dots,k_N] = b_r(x[k_1,\dots,k_N])\Delta x,
\]
with a uniformly spaced grid\footnote{If the domain \(\mathcal{D}\) is
  unbounded, we may restrict the domain to the support of \(y_i\), or
  where \(y_i(x)\geq \epsilon\) for some \(\epsilon > 0\).}
\(\{x[k_1,\dots,k_N]\}\subset\mathcal{D}\). We scale by the volume
element of the grid,
\[
\Delta x = \prod_{n=1}^N \Delta_n x=\prod_{n=1}^N (x[1,\dots,1,\underbrace{2}_n,1,\dots 1 ]- x[1,\dots,1,\underbrace{1}_n,1,\dots 1]),
\]
to ensure the tensors are normalized, \(Y\in\Delta_{K_1\dots K_N}^I\)
and \(B\in\Delta_{K_1\dots K_N}^R\). This leads to the convenient
notation of switching from integrals to summations when we discretize;
\[
1 = \int_{\mathcal{D}} y_i(x) dx \quad \text{discretizes to} \quad 1 \approx \sum_{k_1,\dots,k_N=1}^{K_1\dots K_N} Y[i, k_1,\dots,k_N] \Delta x.
\]

In higher dimensions when \(N\) is large, it can be expensive to compute
the full \(N\)-dimensional kernel density estimation\footnote{Despite
  the existence of fast algorithms for kernel density estimation
  \cite{obrien_fast_2016}, it can still be cheaper to compute \(N\)
  one-dimensional kernel density estimations than one \(N\)-dimensional
  kernel density estimation.} for each \(b_r\) and memory intensive to
store the full \(N\) dimensional tensor \(Y\). We can instead
approximate the full distribution
\(y_i:\mathcal{D}\subseteq \mathbb{R}^N\to\mathbb{R}_+\) as a product
distribution of \(J=N\) one-dimensional distributions
\(y_i^j:\mathcal{D}^j\subseteq\mathbb{R} \to \mathbb{R}_+\):
\[
y_i(x) = \prod_{j=1}^J y_i^j (x[j]),\quad\text{where} \quad \mathcal{D} = \mathcal{D}^1\times\dots\times \mathcal{D}^J,
\]
and similarly with \(b_r\) and \(b_r^j\). This lets us discretize the
probability density functions as \(3\)rd order tensors regardless of the
number of dimensions \(N\),
\[
Y[i, j, k] = y_i^j(x_{k}^j)\Delta x^j,\quad\text{and}\quad B[r,j,k] = b_r^j(x_{k}^j)\Delta x^j,
\]
with a 1D grid \(\{x_k^j\}\subset\mathcal{D}^j\) for each \(j\in[J]\).
The constraint on \(B\) also gets similarly modified to
\[
B\in\Delta_{K}^{RJ}=\Set{B\in\mathbb{R}_+^{R\times J \times K} | \forall (r,j)\in [R]\times[J],\, \sum_{k=1}^{K} B[r,j,k] = 1 }.
\]

We use the full \(3\)-dimensional representation of the probability
density mixtures for the synthetic data in
\cref{sec-synthetic-probability-unmixing}, and use the compressed
representation of the \(7\)-dimensional mixtures for the real word data
in \cref{sec-geological-probability-unmixing}.

\subsection{Solution algorithm}\label{sec-base-algorithm}

Let
\[
f(A, B) := \frac{1}{2}\left\lVert B\times_1 A - Y\right\rVert _F^2
\]
be the objective function we wish to minimize in
\cref{eq-constrained-least-squares}. Following Xu and Yin
\cite{xu_BlockCoordinateDescent_2013}, the general approach we take
to minimize \(f\) is to apply block coordinate descent using each factor
as a different block. Let \(A^t\) be the \(t\)th iteration of \(A\) and
let
\begin{subequations}
\begin{align}
f_A^t(A) &:= \frac{1}{2}\left\lVert B^t \times A- Y \right\rVert _F^2 \label{eq-block-objective-functions-a}\\
f_B^t(B) &:= \frac{1}{2}\left\lVert B \times A^t- Y \right\rVert _F^2\label{eq-block-objective-functions-b}
\end{align}
\end{subequations}
be the (partially updated) objective function at iteration \(t\) for the
factor \(A\), and similarly with \(B\).

Given initial factors \(A^0\) and \(B^0\), we alternate through the
factors and perform the updates
\begin{subequations}
\begin{align}
A^{t+1} &\leftarrow P_{\Delta_{R}^I}\left( A^{t} - \frac{1}{\smoothness{f_A^t}}\nabla f_A^t(A^{t})\right) \label{eq-proximal-explicit-a}\\
B^{t+1} &\leftarrow P_{\Delta_{K_1\dots K_N}^R}\left( B^{t} - \frac{1}{\smoothness{f_B^t}}\nabla f_B^t(B^{t})\right) \label{eq-proximal-explicit-b}
\end{align}
\end{subequations}
for \(t=1,2,\dots\) until some convergence criterion is satisfied. In
our case, we iterate until the relative error,
\[
\frac{\left\lVert B^t \times A^t- Y \right\rVert _F}{\left\lVert Y \right\rVert _F},
\]
or mean relative error,
\[
\frac{1}{I \cdot K_1 \cdot \ldots \cdot K_N} \sum_{k_1,\dots,k_N}\frac{ \left\lvert (B^t \times A^t)[k_1,\dots,k_N] - Y[k_1,\dots,k_N] \right\rvert}{Y[k_1,\dots,k_N]},
\]
is less than some specified tolerance \(\delta\). In practice to avoid dividing by small entries of \(Y\), we may take the mean relative error only on the subarray of \(Y\) where entries are larger than some threshold. Given \(Y\) is in the constraint set \(\Delta_{K}\)

We choose a stepsize of \(\alpha=1/\smoothness{f_A^t}\), since it is a
sufficient condition to guarantee
\(f_A^t(A^{t+1}) \leq f_A^t(A_A^{t})\), but other stepsizes can be used
in theory \cite[Sec.~1.2.3]{nesterov_NonlinearOptimization_2018}.
This choice also agrees with the best stepsize given in
\cref{lem-descent-lemma} since the least-square loss in
\cref{eq-block-objective-functions-a} is
\(\smoothness{f_A^t}\)-smooth and
\(\convexity{f_A^t}\) strongly convex where \(\convexity{f_A^t}=\smoothness{f_A^t}\). A similar idea
holds with \(B\) in \cref{eq-block-objective-functions-b}.

\section{Synthetic data setup}\label{sec-synthetic-data-setup}

This section details the data used in
\cref{sec-synthetic-probability-unmixing}. The main setup is
provided here, and the full file can be viewed at\newline\texttt{julia\_experiments/distribution\_unmixing\_synthetic.jl} in this paper's GitHub
repository \cite{Richardson_multiscale_paper_code}.

\begin{jllisting}
using Distributions

source1a, source1b, source1c = Normal(4, 1), Uniform(-7, 2), Uniform(-1, 1)
source2a, source2b, source2c = Normal(0, 3), Uniform(-2, 2), Exponential(2)
source3a, source3b, source3c = Exponential(1), Normal(0, 1), Normal(0, 3)

source1 = product_distribution([source1a, source1b, source1c])
source2 = product_distribution([source2a, source2b, source2c])
source3 = product_distribution([source3a, source3b, source3c])

sources = (source1, source2, source3)
\end{jllisting}

We generate the following \(5\times 3\) mixing matrix
\begin{jllisting}
p1 = [0.0, 0.4, 0.6]
p2 = [0.3, 0.3, 0.4]
p3 = [0.8, 0.2, 0.0]
p4 = [0.2, 0.7, 0.1]
p5 = [0.6, 0.1, 0.3]

A_true = hcat(p1,p2,p3,p4,p5)'
\end{jllisting}
and use it to construct \(5\) mixture distributions.

\begin{jllisting}
distribution1 = MixtureModel([sources...], p1)
distribution2 = MixtureModel([sources...], p2)
distribution3 = MixtureModel([sources...], p3)
distribution4 = MixtureModel([sources...], p4)
distribution5 = MixtureModel([sources...], p5)
distributions =
    [distribution1, distribution2, distribution3, distribution4, distribution5]
\end{jllisting}

These are discretized into \(65\times 65\times 65\) sample tensors, and
stacked into a \(5\times 65\times 65\times 65\) tensor \(Y\). We
normalize the \(1\)-slices so that they sum to one.

\begin{jllisting}
sinks = [pdf.((d,), xyz) for d in distributions]
Y = cat(sinks...; dims=4)
# reorder so the first dimension indexes mixtures
Y = permutedims(Y, (4,1,2,3))
Y_slices = eachslice(Y, dims=1)
correction = sum.(Y_slices) # normalize slices to 1
Y_slices ./= correction
\end{jllisting}

The options used for both the single and multi scaled factorization are the following.

\begin{jllisting}
options = (
  rank=3,
  momentum=false,
  model=Tucker1,
  tolerance=(0.05, 1e-6),
  converged=(MeanRelError, ObjectiveValue),
  do_subblock_updates=true,
  constrain_init=true,
  constraints=[nonnegative!, simplex_rows!],
  stats=[Iteration, ObjectiveValue, GradientNNCone, RelativeError, MeanRelError],
  mean_rel_error_tol = 1e-4,
  maxiter=50
)
\end{jllisting}

At each scale, the algorithm will continue to iterate until one of the following three occur. The mean relative error is below \(5\%\), the objective value is below \(10^{-6}\), or \(50\) iterations have occurred. At this point, the iteration at the next smaller scale will start.

To create \cref{fig-time-vs-coarse-iterations}, we modify the options so that we only use the objective value at the convergence criteria. This allows us to make a fairer comparison between different number of coarse iterations, and the analysis in the paper. The code for this figure can be found in \texttt{julia\_experiments/distribution\_unmixing\_synthetic\_figures.jl}.

\subsection{Geological data factorization details}\label{sec-geo-numerics-details}

 We discretize the
densities with \(K=2^{10} + 1 = 1025\) points to obtain an input tensor
\(Y\in\mathbb{R}_+^{20\times 7 \times 1025}\) and normalize the depth
fibres so that \(\sum_{k\in[K]} Y[i,j,k] = 1\) for all \(i\in[20]\) and
\(j\in[7]\).

We run the multiscale factorization algorithm with the following call
\begin{jllisting}
multiscale_factorize(Y; continuous_dims=3, options...)
\end{jllisting}
from \texttt{BlockTensorFactorization}.
The third dimension is specified as continuous since each
depth fibre \(Y[i, j, :]\) is a discretized continuous probability
density function.

This is compared to the regular factorization algorithm
\begin{jllisting}
factorize(Y; options...)
\end{jllisting}
using the Julia package \texttt{BenchmarkingTools}. This runs the
algorithm as many times as it can within a default time window. Note
that a new random initialization is generated for each run. After
running the following,
\begin{jllisting}
using BenchmarkingTools

# Run each function once so they compile
factorize(Y; options...);
multiscale_factorize(Y; continuous_dims=3,options...);

benchmark1 = @benchmark factorize(Y; options...)
display(benchmark1)

benchmark2 = @benchmark multiscale_factorize(Y; continuous_dims=3,options...)
display(benchmark2)
\end{jllisting}
we observe the two benchmarks shown in \cref{sec-geological-probability-unmixing}.

We run the multiscale and single scale factorization algorithms with the following options.

\begin{jllisting}
options = (
    rank=3,
    momentum=false,
    do_subblock_updates=false,
    model=Tucker1,
    tolerance=(0.12),
    converged=(RelativeError),
    constrain_init=true,
    constraints=[l1scale_average12slices! ∘ nonnegative!, nonnegative!],
    stats=[Iteration, ObjectiveValue, GradientNNCone, RelativeError],
    maxiter=200
)
\end{jllisting}

We use the same convergence criteria at each scale and iterate until the
relative error between the input \(Y\) and our model \(X\) is at most
\(12\%\), or until \(200\) iterations have passed. We use \(12\%\)
because this is roughly the error Graham et.\ al.~observe in the final
factorization \cite{graham_tracing_2025}, suggesting this is the
roughly the smallest amount of error we can expect in this
factorization.

\end{document}